\newcommand{\bfi}{\bfseries\itshape}
\newtheorem{thm}{Theorem}[section]
\newtheorem{prop}[thm]{Proposition}
\newtheorem{lem}[thm]{Lemma}
\newtheorem{dfn}[thm]{Definition}
\renewcommand\footnotemark{}
\begin{document}

\title{Lagrange-d'Alembert-Poincar\'{e} equations by several stages}

\author{Hern{\'a}n Cendra* and Viviana A. D\'{\i}az**
\\ Departamento de Matem\'atica
\\ Universidad Nacional del Sur, Av. Alem 1253
\\ 8000 Bah\'{\i}a Blanca and CONICET, Argentina
\\{*\footnotesize hcendra@gmail.com}
\\{**\footnotesize viviana.diaz@uns.edu.ar}
\thanks{The authors wish to thank the following institutions for making our work on this article possible: Universidad Nacional del Sur (projects PGI 24/L075 and PGI 24/ZL06); Agencia Nacional de Promoci\'on Cient\'{i}fica y Tecnol\'{o}gica, Argentina (projects PICT 2006-2219 and PICT 2010-2746); CONICET, Argentina (project PIP 2010-2012 11220090101018); European Community, FP7 (project IRSES ``GEOMECH'' 246981).}}

\date{\today}


\maketitle


\begin{abstract}
The aim of this paper is to write explicit expression in terms of a given principal connection of the Lagrange-d'Alembert-Poincar\'{e} equations in several stages. This is obtained by using a reduced Lagrange-d'Alembert's Principle in several stages, extending methods introduced for the case of two stages by one of the authors and collaborators. The case of the Euler's disk is described as an illustrative example.
\end{abstract}
\tableofcontents

\section{Introduction}

The topic of nonholonomic mechanical systems is an old one and has been very active for the last few decades. It is related to applications, like robotics, locomotion, phases and others and also to questions of a purely mathematical interest.

 The equations go\-ver\-ning nonholonomic systems are Lagrange-d'Alembert Equations which are derived by the Lagrange-d'Alembert Principle.
Reduction of Euler-Lagrange and Lagrange-d'Alembert Equations by a group of symmetries is a fundamental issue in mechanics. The basic facts on reduction by one stage are reviewed in detail in Appendix \ref{appendixb}.\\

This paper  is devoted to a specific question, namely, to study the extension of the reduction theory by one stage developed in  \cite{CMR01b,CMR01a}, for the case of several stages. In particular, one obtains explicit expressions of
Lagrange-d'Alembert-Poincar\'e Equations by $n$ stages, written in terms of a given principal connection. As in \cite{CMR01b,CMR01a}, we will use the idea of reducing the variational principle  rather than reducing the equations, including a detailed study of the geometry of variations.
The question of
writing explicit equations for the case of several stages,
where one has a chain of normal subgroups of the
given symmetry group $G$ such that
$G = N_0 \vartriangleright N_1 \vartriangleright ...\vartriangleright N_{n + 1} = \{e\}$ rather than just none or one,
is clearly natural to complete the theory and deal with a wider class of examples.
We describe the illustrative example of Euler's Disk, obtaining reduced equations by two stages. These equations are the ones previously obtained in \cite{CDgrueso}, which were solved by using hypergeometric functions, which means some kind of integrability.\\

The literature on nonholonomic systems and applications is overwhelming. Here are some references that we think one should keep in mind while reading the present paper. General references on mechanics and nonholonomic systems: \cite{foundation,metodosmatem,carira93,CMR01b,CMR01a,dLdD96b,EKMR05,Rui02,HamiltonianReductionAmarillo,
marsden3,neimarkfufaev,marle98,vershik84,vershikfaddeev72}; applications of nonholonomic systems: \cite{bloch,CDgrueso,koonmarsden97c,marle95,HamiltonianReductionAmarillo,neimarkfufaev,marle98}; nonholonomic systems with symmetry and reduction: \cite{batesGmD96,batessniatycki,BKMM96,CdLMdD99,CMR01b,CMR01a,CMRY09,marle95,
HamiltonianReductionAmarillo,marle98}.

The paper is reasonably self-contained and the results that constitute the basic background are clearly stated in the appendices A and B.

In section \ref{sect:corcheteetapas} we describe one of the main ingredients for the paper, namely, an explicit formula for the Lie bracket by several stages.
In section \ref{sect:Euler-d'Alem-Poin} we study an important particular case of the general theory: the Euler-d'Alembert-Poincar\'e equations by stages.
In sections \ref{sect:Lag-Poin} and \ref{sect:Lag-d'Alem-Poin} we obtain the main results of the paper, that is, the Lagrange-Poincar\'e and the Lagrange-d'Alembert-Poincar\'e equations by several stages.
In section \ref{sect:disco} we describe the example of Euler's disk.

Some future studies are in order, like
establishing the connection with Hamiltonian reduction by
stages \cite{HamiltonianReductionAmarillo} and, more generally, Dirac-Weinstein reduction by
stages \cite{CMRY09}.\\


\section{The Lie bracket by stages}\label{sect:corcheteetapas}
In this section we calculate explicit expressions for the Lie bracket by several stages. For doing this,
we need some background on reduced covariant derivatives and associated connections on  associated bundles, which we recall in Appendix \ref{appendixa}.
The formulas that we obtain are fundamental for writing reduced equations of motion by several stages, which is the main purpose of the paper.

\subsection{An explicit formula for the Lie bracket by two stages} \label{sec:LieBracket}

In this section we will use results and notation from  \cite{CMR01a}, see also
Appendix \ref{appendixa}.
\begin{large}\end{large}
Let a given principal bundle $\pi:Q\rightarrow Q/G$
with structure group $G,$ and choose a Riemannian invariant metric on $Q$.
Let
$N$ be a normal subgroup of $G$,  then for each $q\in Q$ we have a decomposition of $TQ$ as an orthogonal direct
sum $T_qQ=\text{Ver}^N(T_qQ)\oplus H_N(q),$ where $H_N(q)$ is the
orthogonal complement of $Ver^N(T_qQ)$.
Then, this
collection of $H_N(q)$ defines a connection on the principal bundle
$Q$ with structure group $N.$
Let $\mathcal{A}_N$ be the
corresponding 1-form connection.
From the definition of $\mathcal{A}_N$ it follows easily that for each $g\in G$ and every
$v_q\in T_qQ$ we have that $\mathcal{A}_N(g v_q)=Ad_g
\mathcal{A}_N(v_q).$

\vspace{.3cm}
Next, we use the notation and results on quotient horizontal and quotient vertical connections, summarized in Appendix \ref{appendixa}.
Given $[q_0,\xi_0]_G\in\widetilde{\mathfrak{g}},$
 let $X_0$ be the $G$-invariant vertical vector field defined by
$X_0(q_0)=\xi_0 q_0,$
which we can prove that it is well defined, and satisfies
$\mathcal{A}\left( X_0 (q_0)\right) = \xi_0.$

Let $W \rightarrow Q$ be a vector bundle where $G$ acts by vector bundle isomorphisms and let
$w \in \Gamma^G (W),$
which is identified with
$[w]_G$.
We define the
\textbf{\textit{quotient,
or reduced, vertical connection}}
$[\nabla^{(A,V)} ]_{G,[q_0,\xi_0]_G}[w]_G$
by
\begin{equation}\label{conexionverticalcociente}
\left[\nabla^{(\mathcal{A},V)}\right]_
{G,[q_0,\xi_0]_G}[w]_G:=\left[\nabla_{X_0}w\right]_G,
\end{equation}

which, in the present context, is equivalent to the one given in \cite{CMR01a}
(see equation \ref{dfn5120} in Appendix \ref{appendixa}).

\vspace{.5cm}
The following proposition has been proven in \cite{CMR01a}, Corollary 6.3.11.
\begin{prop} \label{propcorchgral}Consider a
Lie group
$G,$ $N$
a normal subgroup of
$G$ and
$K=G/N.$
We sometimes think of
$G$ as a principal bundle
$\pi_N : G \rightarrow K$ with structure group $N$  acting on the left, and also,
as a principal bundle
$G \rightarrow G/G\equiv\{[e_G]_G\},$ where $e_G$ is the neutral element of $G,$ the base is a single point and the structure group $G$ acts on the left by left translations.
 Similarly, we consider sometimes $K$
as a principal bundle where the base is a single point and the structure group $K$ acts on the left by left translations,
$K \rightarrow K/K\equiv\{[e_K]_K\},$ where $e_K$ is the neutral element of $K.$
Let
$\mathfrak{g},\ \mathfrak{n}$
and $\mathfrak{k}$ be the Lie algebras of
$G,\ N$ and $K$ respectively.
We call
$ e_N = e_G$
the neutral element of
$N.$ The adjoint bundle $\tilde{\mathfrak{g}} \rightarrow \{[e_G]_G\}$
of
$G \rightarrow \{[e_G]_G\},$
is naturally identified with
$\mathfrak{g},$
say
$\mathfrak{g} \equiv \tilde{\mathfrak{g}},$
by
$\xi \equiv [e_G, \xi]_G.$  Likewise  there is a natural identification
$\mathfrak{k} \equiv \tilde{\mathfrak{k}}$
given by
$\kappa \equiv [e_G, \kappa]_G.$

Let an arbitrary identification
$\mathfrak{g}\equiv \mathfrak{k}\oplus\mathfrak{n}$ as linear spaces. Choose a $G$-invariant Riemannian metric on $G,$ which is determined by its restriction to
$\mathfrak{g}.$
Let  $\mathcal{A}_N$ be the principal connection defined on the
principal bundle $G \rightarrow K$ with structure group $N$ in the way described at the beginning of this section, which has the property that $\mathcal{A}_N(g v_q)=\text{Ad}_g
\mathcal{A}_N(v_q),$ for every $g,q\in G,\ v_q\in T_qG.$
 Then
$\mathfrak{g}=\mathfrak{k}^{\mathcal{A}_N}\oplus\mathfrak{n}$ where
$\mathfrak{k}^{\mathcal{A}_N}$ is the horizontal lift of
$\mathfrak{k} = T_{e_K}K$ in the bundle $G\rightarrow K,$
at the point
$e_G \in G.$ Note that, by definition,
the subspaces
$\mathfrak{k}^{\mathcal{A}_N}$ and $\mathfrak{n}$ of $\mathfrak{g}$
are orthogonal.

Then, for the given identification
$\mathfrak{g}\equiv \mathfrak{k}\oplus\mathfrak{n}$, the bracket
on the Lie algebra $\mathfrak{g}$ can be written in terms of
the brackets on the Lie algebra $\mathfrak{n}$ and the Lie algebra
$\mathfrak{k},$ and also in terms of $\nabla^{(\mathcal{A}_N,V)}$ and
$\widetilde{B}^{\mathcal{A}_N}$ using the formula, given in \cite{CMR01a},

\begin{equation}\label{corcheteCMR}
\left[\kappa_1\oplus\eta_1,\kappa_2\oplus\eta_2\right]=
\left[\kappa_1,\kappa_2\right]\oplus[\nabla^{(\mathcal{A}_N,V)}]_{K,\kappa_1}\eta_2-
[\nabla^{(\mathcal{A}_N,V)}]_{K,\kappa_2}\eta_1-
[\widetilde{B}^{\mathcal{A}_N}]_{K}(\kappa_1,\kappa_2)+
\left[\eta_1,\eta_2\right].
\end{equation}

\vspace{.2cm}
Formula (\ref{corcheteCMR}) should be interpreted as follows.
By definition $\widetilde{\mathfrak{n}}\rightarrow K$ is a vector bundle on $K$ which is a left principal bundle  $K\rightarrow\{[e_K]_K\}$ with structure group $K$ over a single point.
We have an action of $K$ on $\widetilde{\mathfrak{n}}$ covering the action of $K$ on $K\rightarrow\{[e_K]_K\}$ defined by $[g]_N[h,\eta]_N=[gh,Ad_g\eta]_N.$

For each $\eta\in\mathfrak{n}$ there is a $K$-invariant section $\sigma_{\eta}:K\rightarrow \widetilde{\mathfrak{n}}$ defined by $\sigma_{\eta}([g]_N)=[g,Ad_g\eta]_N.$
So, we can identify
\begin{equation}\label{qqqq}
 \sigma_{\eta}\equiv[\sigma_{\eta}]_K\equiv\eta.
\end{equation}

Then the right hand side of (\ref{corcheteCMR}) should be interpreted as being
\begin{align} \nonumber
\left[\kappa_1\oplus\eta_1,\kappa_2\oplus\eta_2\right]
&=
\left[\kappa_1,\kappa_2\right]\oplus[\nabla^{(\mathcal{A}_N,V)}]_{K,[e_K,\kappa_1]_K}
[\sigma_{\eta_2}]_K -
[\nabla^{(\mathcal{A}_N,V)}]_{K,[e_K,\kappa_2]_K}[\sigma_{\eta_1}]_K \\
&-
[\widetilde{B}^{\mathcal{A}_N}]_{K}(\kappa_1,\kappa_2)+\left[[e_K,\eta_1]_K,[e_K,\eta_2]_K\right]_N,
\end{align}

using the identifications
$\displaystyle\left[[\sigma_{\eta_1}]_K,[\sigma_{\eta_2}]_K\right]\equiv
\left[[\sigma_{\eta_1},\sigma_{\eta_2}]\right]_K\equiv[\sigma_{[\eta_1,\eta_2]}]_K
\equiv[\eta_1,\eta_2].$
Now, according to the general definition given in \cite{CMR01a} that we recall in formula (\ref{betilde}) in Appendix \ref{appendixb}, $\widetilde{B}^{\mathcal{A}_N}$ is a $\widetilde{\mathfrak{n}}$-valued 2-form on $K$ given by
$\widetilde{B}^{\mathcal{A}_N}(k)(\dot{k},\delta k)=[g,B(g)(\dot{g},\delta g)]_N,$ where $k=[g]_N,$ $\dot{k}=T\pi_N\dot{g}$ and $\delta k=T\pi_N \delta g.$ It can be checked that $\widetilde{B}^{\mathcal{A}_N}$ is $K$-invariant. Then
\begin{align*}
[\widetilde{B}^{\mathcal{A}_N}]_K
(\kappa_1,\kappa_2)
&: =
[\widetilde{B}^{\mathcal{A}_N}(e_K)(\kappa_1,\kappa_2)]_K\\
&=
\left[[e_G, B^{\mathcal{A}_N}(e_G)(\kappa_1^{\mathcal{A}_N}(e_G),
\kappa_2^{\mathcal{A}_N}(e_G))]_N\right]_K \\
&\equiv
B^{\mathcal{A}_N}(e_G)(\kappa_1^{\mathcal{A}_N}(e_G),
\kappa_2^{\mathcal{A}_N}(e_G)),
\end{align*}
being
$\kappa_i^{\mathcal{A}_N}(e_G)\in\mathfrak{g}$ the horizontal lift of
$\kappa_i$ at $e_G$ to $G$ for $i=1,2.$
\end{prop}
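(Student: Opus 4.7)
The plan is to prove formula (\ref{corcheteCMR}) by computing $[\kappa_1\oplus\eta_1,\kappa_2\oplus\eta_2]_{\mathfrak{g}}$ directly at $e_G$, decomposing the bracket into its four bilinear pieces and identifying each of them with a term on the right-hand side. Regarding $\mathfrak{g}\equiv\mathfrak{k}^{\mathcal{A}_N}\oplus\mathfrak{n}$ via the horizontal lift at $e_G$, I set $\xi_i:=\kappa_i^{\mathcal{A}_N}(e_G)+\eta_i\in\mathfrak{g}$ and exploit that the bracket on $\mathfrak{g}$ agrees with the commutator of the corresponding left-invariant vector fields evaluated at the identity.

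For the horizontal-horizontal piece, let $X_i$ be the left-invariant extension of $\kappa_i^{\mathcal{A}_N}(e_G)$ to $G$. The $G$-invariance of the metric together with the normality of $N$ force $X_i$ to be horizontal at every point and to descend under $\pi_N$ to the left-invariant vector field on $K$ whose value at $e_K$ is $\kappa_i$. It then follows from Cartan's structure equation, applied to the horizontal vector fields $X_1, X_2$, that the $\mathfrak{k}^{\mathcal{A}_N}$-component of $[X_1,X_2](e_G)$ is the horizontal lift at $e_G$ of $[\kappa_1,\kappa_2]_{\mathfrak{k}}$, while the $\mathfrak{n}$-component equals $-B^{\mathcal{A}_N}(e_G)\left(\kappa_1^{\mathcal{A}_N}(e_G),\kappa_2^{\mathcal{A}_N}(e_G)\right)$, which under the interpretation of $\widetilde{B}^{\mathcal{A}_N}$ spelled out at the end of the statement is exactly $-[\widetilde{B}^{\mathcal{A}_N}]_{K}(\kappa_1,\kappa_2)$.

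For the cross terms, normality of $N$ makes $\mathfrak{n}$ an ideal, so $[\kappa_i^{\mathcal{A}_N}(e_G),\eta_j]_{\mathfrak{g}}\in\mathfrak{n}$. I would match this with the reduced vertical covariant derivative by writing the $K$-invariant section $\sigma_{\eta_j}$ as the $N$-equivariant map $f_j(g)=\mathrm{Ad}_g\eta_j$ and differentiating at $e_G$ along the horizontal lift $\kappa_i^{\mathcal{A}_N}(e_G)$; since $\frac{d}{dt}\big|_{t=0}\mathrm{Ad}_{\exp(t\kappa_i^{\mathcal{A}_N}(e_G))}\eta_j=[\kappa_i^{\mathcal{A}_N}(e_G),\eta_j]_{\mathfrak{g}}$, definition (\ref{conexionverticalcociente}) produces exactly this bracket, which under the identification $\sigma_\bullet\equiv\bullet$ from (\ref{qqqq}) matches the corresponding term of (\ref{corcheteCMR}). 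The vertical-vertical piece $[\eta_1,\eta_2]_{\mathfrak{g}}$ lies in $\mathfrak{n}$ and coincides with $[\eta_1,\eta_2]_{\mathfrak{n}}$ since $\mathfrak{n}$ is a Lie subalgebra.

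The hard part is not the computation of any individual piece but the bookkeeping: the $N$-action is by left multiplication, so vertical vectors on $G$ are right-translates of elements of $\mathfrak{n}$, while the identifications with $\mathfrak{g}$ and the covariant derivatives are expressed via left-invariant extensions and evaluations at $e_G$. Keeping signs and $\mathrm{Ad}$-twists coherent across Cartan's formula, the definition of $\nabla^{(\mathcal{A}_N,V)}$, and the $K$-invariance-based identification $\sigma_\eta\equiv\eta$ is the one delicate point, and it is precisely what the elaborate notational setup preceding the proposition is designed to handle.
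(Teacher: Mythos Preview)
Your proposal is correct, and in fact it is more self-contained than what the paper does. The paper does not prove this proposition at all: it simply cites it as Corollary~6.3.11 of \cite{CMR01a}. What the paper does do, in the paragraphs immediately following the proposition and in Lemma~\ref{DerivCovAdjunto}, is compute each of the pieces $[\widetilde{B}^{\mathcal{A}_N}]_{K}(\kappa_1,\kappa_2)$ and $[\nabla^{(\mathcal{A}_N,V)}]_{K,\kappa}\eta$ explicitly, obtaining the identifications $[\widetilde{B}^{\mathcal{A}_N}]_{K}(\kappa_1,\kappa_2)\equiv -\mathcal{A}_N([\kappa_1^{\mathcal{A}_N},\kappa_2^{\mathcal{A}_N}])$ and $[\nabla^{(\mathcal{A}_N,V)}]_{K,\kappa}\eta\equiv[\kappa^{\mathcal{A}_N},\eta]$. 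Those computations are precisely the horizontal-horizontal and cross-term calculations you outline, carried out by the same mechanism: left-invariant extension of $\kappa_i^{\mathcal{A}_N}(e_G)$ (which stays horizontal thanks to the $G$-equivariance $\mathcal{A}_N(gv_q)=\mathrm{Ad}_g\mathcal{A}_N(v_q)$), Cartan's structure equation for the curvature term, and differentiation of $t\mapsto\mathrm{Ad}_{g(t)}\eta$ along a horizontal curve through $e_G$ for the covariant-derivative term.

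So the difference is organizational rather than mathematical: the paper takes formula~(\ref{corcheteCMR}) as given and then unpacks its ingredients, whereas you assemble the same ingredients into a direct proof of the formula. Your route has the advantage of making the proposition independent of the cited reference; the paper's route keeps the proposition short and defers the real work to the explicit calculations that it needs anyway for the $n$-stage formula. One small point worth tightening in your write-up: when you invoke definition~(\ref{conexionverticalcociente}) for the cross term, the object $[\nabla^{(\mathcal{A}_N,V)}]_{K,\kappa}$ lives on $\widetilde{\mathfrak{n}}\to K$ rather than on $G$, so the passage from ``differentiate $f_j(g)=\mathrm{Ad}_g\eta_j$ along the horizontal lift'' to the formal definition goes through the adjoint-bundle covariant-derivative formula $\frac{D}{Dt}[g,\xi]_N=[g,-[\mathcal{A}_N(g,\dot g),\xi]+\dot\xi]_N$ with a horizontal representative, exactly as in the paper's proof of Lemma~\ref{DerivCovAdjunto}. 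This is the only place where your compressed argument leans on a computation that deserves one extra line.
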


In order to obtain a more explicit expression of the Lie bracket
$\left[\kappa_1\oplus\eta_1,\kappa_2\oplus\eta_2\right]$
in terms of the decomposition $\mathfrak{g}=\mathfrak{k}\oplus\mathfrak{n},$ we shall see explicitly how to compute
$[\widetilde{B}^{\mathcal{A}_N}]_{G/N}(\kappa_1,\kappa_2)$ and
$[\nabla^{(\mathcal{A}_N,V)}]_{G/N,\kappa}\eta$
and identify them with elements of $\mathfrak{n},$
using the decomposition $\mathfrak{g}=\mathfrak{k}\oplus\mathfrak{n}$ and the identifications $\eta\leftrightarrow\sigma_{\eta}$ and $\kappa\leftrightarrow[e_K,\kappa]_K$ as we have explained in the last proposition.

\paragraph{Calculation of $[\widetilde{B}^{\mathcal{A}_N}]_{G/N}(\kappa_1,\kappa_2)$.}
For $\kappa_i\in \mathfrak{k},$ $i\in \{1,2\},$ we can
define on $G$ a vector field associated to $\kappa_i$ through the
formula $X_{\kappa_i}(g)=g. \kappa_i^{\mathcal{A}_N}(e_G).$ This field
is a $G$-invariant horizontal field on the principal bundle $G\rightarrow K$ with principal connection $\mathcal{A}_N$. It follows that $[X_{\kappa_1},X_{\kappa_2}]$ is $G$-invariant and we have  $\left[X_{\kappa_1},X_{\kappa_2}\right](g)
=g\left(\left[X_{\kappa_1},X_{\kappa_2}\right](e_G)\right)=g[\kappa_1^{\mathcal{A}_N}(e_G),\kappa_2^{\mathcal{A}_N}(e_G)].$

\vspace{.3cm}
Using the general formula (\ref{curvatura}) in Appendix \ref{appendixb} we have that

$$[\widetilde{B}^{\mathcal{A}_N}]_K(\kappa_1,\kappa_2)=\left[[e_G,B^{\mathcal{A}_N}(e_G)
(\kappa_1^{\mathcal{A}_N}(e_G),\kappa_2^{\mathcal{A}_N}(e_G))]_N\right]_K=
\left[[e_G,-\mathcal{A}_N\left([X_{\kappa_1},X_{\kappa_2}](e_G)\right)]_N\right]_K=$$

$$\left[\left[e_G,-\mathcal{A}_N\left(e_G[\kappa_1^{\mathcal{A}_N}(e_G),
\kappa_2^{\mathcal{A}_N}(e_G)]\right)\right]_N\right]_K
=
\left[\left[e_G,-\mathcal{A}_N\left([\kappa_1^{\mathcal{A}_N}(e_G),
\kappa_2^{\mathcal{A}_N}(e_G)]\right)\right]_N\right]_K
,$$

\vspace{.3cm} where we have used the
definition of the bracket in $\mathfrak{g}$ and taken into account that the $X_{\kappa_i}$ are left invariant vector fields on $G$
such that $X_{\kappa_i}(e_G)=\kappa_i^{\mathcal{A}_N}(e_G),$ for $i=1,2.$
Finally, using the identification
(\ref{qqqq})
with
$\eta \equiv -\mathcal{A}_N\left([\kappa_1^{\mathcal{A}_N}(e_G),
\kappa_2^{\mathcal{A}_N}(e_G)]\right)$
we obtain the identification
\begin{equation}
[\widetilde{B}^{\mathcal{A}_N}]_K(\kappa_1,\kappa_2) \equiv  -\mathcal{A}_N\left([\kappa_1^{\mathcal{A}_N}(e_G),
\kappa_2^{\mathcal{A}_N}(e_G)]\right).
\end{equation}

In order to simplify the notation, we define the bilinear form
$a_N:\mathfrak{k}\times\mathfrak{k}\rightarrow
\widetilde{\mathfrak{n}}/K\equiv\mathfrak{n}$ given by the following
formula,

\begin{equation}
a_N(\kappa,\overline{\kappa})
:= -\mathcal{A}_N
\left([\kappa^{\mathcal{A}_N}(e_G),\overline{\kappa}^{\mathcal{A}_N}(e_G)]\right)
\end{equation}

and we can write
\begin{equation}
[\widetilde{B}^{\mathcal{A}_N}]_K(\kappa,\overline{\kappa}) \equiv a_N(\kappa,\overline{\kappa}).
\end{equation}

\paragraph{Calculation of $[\nabla^{({\mathcal{A}_N},V)}]_{G/N,
\kappa}\eta$.}
Using the definition of the quotient vertical
connection we have
\begin{equation*}
[\nabla^{({\mathcal{A}_N},V)}]_{G/N,
\kappa}\eta\equiv[\nabla^{({\mathcal{A}_N},V)}]_{K,[e_K,
\kappa]_{K}}[\sigma_{\eta}]_K=
\left[\nabla_{X_0}\sigma_{\eta}\right]_K,
\end{equation*}
where
 $X_0$
is the uniquely determined invariant vertical vector field
on the principal bundle
$K \rightarrow \{e_K\}$ with structure group $K,$
 such that
$\mathcal{A}_K(X_0(e_K)) = \kappa ,$
where
$\mathcal{A}_K$
is the uniquely determined principal connection on
the principal bundle
$K \rightarrow \{e_K\},$ see Appendix \ref{appendixa}.\\

\textbf{Notation.}
For the rest of this paper we will often consider some horizontal lift with respect to some connection, at the neutral element $e = e_G$ of $G$, like, for instance,
$\kappa^{\mathcal{A}_N}(e_G).$ In those cases we will often avoid writting
$e_G,$ for simplicity.\\

We are going to use the following lemma.

\vspace{.5cm}
\begin{lem}\label{DerivCovAdjunto}
There is a natural identification
\begin{equation}
 [\nabla^{(\mathcal{A}_N,V)}]_{G/N,\ \kappa}\
\eta
\equiv
[\kappa^{\mathcal{A}_N},\eta].
\end{equation}
\end{lem}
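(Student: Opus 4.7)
The plan is to unfold the definition of the quotient vertical connection, lift everything to $G$ via the equivariant-function description of sections of $\widetilde{\mathfrak{n}} \to K$, and then compute the derivative of the adjoint map in the horizontal direction.

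First, I would start from the identity displayed just above the lemma, namely
$$[\nabla^{(\mathcal{A}_N,V)}]_{G/N,\kappa}\eta \equiv [\nabla^{(\mathcal{A}_N,V)}]_{K,[e_K,\kappa]_K}[\sigma_\eta]_K = [\nabla_{X_0}\sigma_\eta]_K,$$
where $X_0$ is the $K$-invariant vertical vector field on $K\to\{e_K\}$ with $X_0(e_K)=\kappa$. Since $X_0$ is $K$-invariant and $\sigma_\eta$ is $K$-invariant, the section $\nabla_{X_0}\sigma_\eta$ of $\widetilde{\mathfrak{n}}\to K$ is again $K$-invariant, so under the identification (\ref{qqqq}) it suffices to evaluate it at $e_K$ and read off the resulting element of $\mathfrak{n}$.

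Next, I would pass to the equivariant-function picture. The section $\sigma_\eta$ corresponds to the $N$-equivariant map $\hat\sigma_\eta:G\to\mathfrak{n}$ given by $\hat\sigma_\eta(g)=\mathrm{Ad}_g\eta$ (one checks $\hat\sigma_\eta(ng)=\mathrm{Ad}_n\hat\sigma_\eta(g)$). The standard formula for the covariant derivative on the associated bundle $\widetilde{\mathfrak{n}}=G\times_N\mathfrak{n}$ induced by the principal connection $\mathcal{A}_N$ on $G\to K$ reads
$$(\nabla_{v_{[g]}}s)([g]_N)=[g,\,v^H_g(\hat s)]_N,$$
where $v^H_g\in T_gG$ is the $\mathcal{A}_N$-horizontal lift of $v_{[g]}$. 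Applying this at $[e_G]_N=e_K$, the horizontal lift of $X_0(e_K)=\kappa$ is precisely $\kappa^{\mathcal{A}_N}(e_G)\in T_{e_G}G$.

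Third, I would carry out the direct computation. For any curve $g(t)\subset G$ with $g(0)=e_G$ and $\dot g(0)=\kappa^{\mathcal{A}_N}(e_G)$,
$$\kappa^{\mathcal{A}_N}(e_G)(\hat\sigma_\eta)=\frac{d}{dt}\bigg|_{t=0}\mathrm{Ad}_{g(t)}\eta=[\kappa^{\mathcal{A}_N}(e_G),\eta],$$
the bracket being taken in $\mathfrak{g}$. Normality of $N$ in $G$ guarantees that $\mathfrak{n}$ is a Lie ideal, so this bracket lies in $\mathfrak{n}$, and hence $(\nabla_{X_0}\sigma_\eta)(e_K)=[e_G,[\kappa^{\mathcal{A}_N}(e_G),\eta]]_N$. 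By $K$-invariance, $\nabla_{X_0}\sigma_\eta=\sigma_{[\kappa^{\mathcal{A}_N},\eta]}$, so passing to the $K$-quotient and using the identification $\sigma_{\eta'}\equiv\eta'$ yields $[\nabla^{(\mathcal{A}_N,V)}]_{G/N,\kappa}\eta\equiv[\kappa^{\mathcal{A}_N},\eta]$, as claimed.

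The only nontrivial point is bookkeeping: one must be careful that the covariant derivative on the associated bundle, the identification (\ref{qqqq}), and the convention for horizontal lifts at $e_G$ all fit together to produce a bracket in $\mathfrak{g}$ restricted to $\mathfrak{n}$ rather than some projected or adjusted quantity. Once the equivariant-function description is in place, however, the formula $\tfrac{d}{dt}\mathrm{Ad}_{g(t)}\eta|_{t=0}=[\dot g(0),\eta]$ does all the work and no further structural input (for instance, no use of $\widetilde B^{\mathcal{A}_N}$) is required.
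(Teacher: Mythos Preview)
Your proof is correct and follows essentially the same route as the paper's. Both arguments reduce to differentiating $t\mapsto\mathrm{Ad}_{g(t)}\eta$ along an $\mathcal{A}_N$-horizontal curve $g(t)$ through $e_G$ and reading off $[\kappa^{\mathcal{A}_N},\eta]$; the only cosmetic difference is that the paper invokes the explicit adjoint-bundle formula $\tfrac{D}{Dt}[g,\xi]_N=[g,-[\mathcal{A}_N(\dot g),\xi]+\dot\xi]_N$ and then kills the connection term by choosing $g(t)$ horizontal, whereas you package the same step as the standard ``horizontal-lift derivative of the equivariant function'' description of the induced connection.
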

\begin{proof}

 Let $[q(s),\xi_(s)]_G$ be any curve in $\tilde{\mathfrak{g}}.$ Then we have (see definition \ref{dfn214} in Appendix
\ref{appendixa}),
 $$\frac{D[q(s),\xi(s)]_G}{Ds}=\left[q(s),-[\mathcal{A}(q(s),\dot{q}(s)),\xi(s)]+\dot{\xi}(s)\right]_G.$$

Also we have
$$\nabla_X^{(\mathcal{A}_N,V)}v(q_0)=\left.\frac{D}{Dt} g_q(t)v(t_0)\right |_{t=t_0}$$
with $v\in\Gamma^G(V),\ q(t)\in Q$ such that $\dot{q}(t_0)=X(q_0),$
$\tau:V\rightarrow Q,$ $v(t)\in V$ and
 $q(t)=\tau(v(t)),$ with $q_h(t)$ horizontal such that $q(t)=g_q(t).q_h(t),\ g_q(t_0)=e.$

\vspace{.2cm}
 So, considering the case $Q=G/N=K,$ $V\equiv \widetilde{\mathfrak{n}},$
$\tau=\widetilde{\pi}_N:\widetilde{\mathfrak{n}}\rightarrow K,$ with
$\sigma_{\eta}([g]_N)=$ $=[g]_N.[e_G,\eta]_N,$ and $k(t)=[g(t)]_N,$
$\dot{k}(t_0)=[\dot{g}(t_0)]_N=\kappa,$
$v(t)=[g(t),\text{Ad}_{g(t)}\eta]_N,$ where $g(t_0)=e,$ we can write

$\displaystyle[\nabla^{({\mathcal{A}_N},V)}]_{K,\ \kappa}\
\eta=\left[\nabla_{Y_0}\sigma_{\eta}\right]_K=\left[\left.\frac{D}{Dt}k(t).
\sigma_{\eta}(e_K)\right |_{t=t_0}\right]_K.$ So, we have

\begin{eqnarray*}
[\nabla^{({\mathcal{A}_N},V)}]_{G/N,\ \kappa}\ \eta &=
&\left[\left.\frac{D}{Dt}k(t).
\sigma_{\eta}(e_K)\right |_{t=t_0}\right]_K=\left[\left.\frac{D}{Dt}[g(t)]_N.[e,\eta]_N\right |_{t=t_0}\right]_K=\\
 &=& \left[\left.\frac{D}{Dt}[g(t),\text{Ad}_{g(t)}\eta]_N\right |_{t=t_0}\right]_K
\end{eqnarray*}

where the last identity follows from the definition of the action of
$K=G/N$ over $\widetilde{\mathfrak{n}}.$

\vspace{.2cm} Now, applying the definition of covariant derivative
in the adjoint bundle, we have

\vspace{.2cm}
$\displaystyle\left[\left.\frac{D}{Dt}[g(t),\text{Ad}_{g(t)}\eta]_N\right |_{t=t_0}\right]_K=\left[\left.\left[g(t),-\left[\mathcal{A}_N(g(t),\dot{g}(t)),\text{Ad}_{g(t)}\eta
\right]+(\text{Ad}_{g(t)}\eta)\dot{}\
\right |_{t=t_0}\right]_N\right]_K.$

\vspace{.3cm} If we choose $g(t)$ horizontal with respect to
$\mathcal{A}_N$ such that $g(t_0)=e,$ we obtain the formula
\begin{equation}
 [\nabla^{(\mathcal{A}_N,V)}]_{G/N,\ \kappa}\
\eta=\left[[e,[\kappa^{\mathcal{A}_N},\eta]]_N\right]_K
\end{equation}
where
$\kappa^{\mathcal{A}_N}$ is the horizontal lift of $\kappa$ in $e_G$
to $G.$\\

Using (\ref{qqqq})
we obtain the identification
$\displaystyle
 [\nabla^{(\mathcal{A}_N,V)}]_{G/N,\ \kappa}\
\eta
\equiv
[\kappa^{\mathcal{A}_N},\eta].$
\end{proof}

\vspace{.5cm}
We define the bilinear form
$b_N:\mathfrak{k}\times\mathfrak{n}\rightarrow
\widetilde{\mathfrak{n}}/K\equiv\mathfrak{n}$ given by
\begin{equation}
b_N(\kappa,\eta):= [\kappa^{\mathcal{A}_N},\eta].
\end{equation}
Then we have an explicit formula for the Lie bracket
\begin{align}\label{corchg2etapas}
\left[\kappa\oplus\eta,\overline{\kappa}\oplus\overline{\eta}\right]
=&
\left[\kappa,\overline{\kappa}\right]\oplus
[\nabla^{(\mathcal{A}_N,V)}]_{K,\kappa}\ \overline{\eta}-
[\nabla^{(\mathcal{A}_N,V)}]_{K,\overline{\kappa}}\
\eta-[\widetilde{B}^{\mathcal{A}_N}]_K(\kappa,\overline{\kappa})+
\left[\eta,\overline{\eta}\right] \nonumber \\
=&
\left[\kappa,\overline{\kappa}\right]\oplus\left[[e,[\kappa^{\mathcal{A}_N},
\overline{\eta}]]_N\right]_K
-
\left[[e,[\overline{\kappa}^{\mathcal{A}_N},\eta]]_N\right]_K \nonumber \\
&-\left[\left[e_G,-\mathcal{A}_N\left([\kappa_1^{\mathcal{A}_N}(e_G),
\kappa_2^{\mathcal{A}_N}(e_G)]\right)\right]_N\right]_{G/N}+
[\eta,\overline{\eta}] \nonumber \\
\end{align}
or equivalently, using the defined bilineal forms,

\begin{align}\label{corchg2etapasbilineales}
 \left[\kappa\oplus\eta,\overline{\kappa}\oplus\overline{\eta}\right]
\equiv
[\kappa,\overline{\kappa}]\oplus
b_N(\kappa,\overline{\eta})
-b_N(\overline{\kappa},\eta)-a_N(\kappa,\overline{\kappa})+[\eta,\overline{\eta}].
\end{align}

\vspace{.5cm} \subsection{The Lie bracket by several stages.}

Now we will obtain a formula that generalizes
(\ref{corchg2etapasbilineales}) for the case of several stages, in fact, we will show that it can be done by a repeated application of (\ref{corchg2etapasbilineales}).
 This will be one of the main results of this paper.
Another important result will be the application of this generalization for reducing a nonholonomic system by several stages.

\vspace{.5cm}
\begin{thm} \label{formcorchete}
Let $G$ be a Lie group and let a chain of $n$
normal subgroups $N_j\lhd
N_{j-1}$ for $1\leq j\leq n+1,$ where $N_0=G$
and $N_{n+1}=\{e\}.$
Denote $N_{(j-1,j)}$ the groups $N_{j-1}/N_j,$
 $\mathfrak{n}_j$ the Lie algebra of the subgroup $N_j$
and $\mathfrak{n}_{(j-1, j)}$
 the Lie algebra of $N_{(j-1,j)}.$
Note that since $\mathfrak{n}_{n+1} = \{0\}$ we have
$\mathfrak{n}_{(n, n+1)} = \mathfrak{n}_n.$

Consider a linear identification as vector spaces
\begin{eqnarray}
\label{descompg3} \mathfrak{g} =
\mathfrak{n}_{(0,1)}\oplus\mathfrak{n}_{(1,2)}\oplus...\oplus\mathfrak{n}_{(n-1,n)}\oplus\mathfrak{n}_{(n,n+1)}
\end{eqnarray}
such that
$\mathfrak{n}_{j-1} = \mathfrak{n}_{(j-1,j)}\oplus \mathfrak{n}_j$
for all $1 \leq j \leq n+1.$

Let
$\displaystyle\bigoplus_{i=0}^{n}\eta^{(i,i+1)}$
and
 $\displaystyle\bigoplus_{j=0}^{n}\overline{\eta}^{(j,j+1)}$ be two elements of
 $\mathfrak{g},$ where $\eta^{(i,i+1)}\in \mathfrak{n}_{(i,i+1)},$
  $\overline{\eta}^{(j,j+1)}\in \mathfrak{n}_{(j,j+1)}$, for $i,j = 0,...,n.$

For each $j = 1,...,n+1$, consider the left principal bundle $N_{j-1}\rightarrow N_{(j-1,j)}=N_{j-1}/N_j$ with structure group $N_j$ and with
connection $\mathcal{A}_{N_j}$, defined in a similar way as it was done for the principal bundle
$G \rightarrow G/N$ with structure group $N$ with connection $A_N$, at the beginning of section \ref{sect:corcheteetapas}, using the same $G$-invariant metric, which will be, naturally,
$N_{j-1}$-invariant.

\vspace{.3cm} \label{bilineales} Consider the map \hspace{.2cm} $\displaystyle
b_{(N_{j-1},N_j)}:\mathfrak{n}_{(j-1,j)}\times\mathfrak{n}_j\rightarrow\mathfrak{n}_j,$
$j = 1,..,n+1$,
defined by \vspace{.1cm}\newline \vspace{.1cm}$\displaystyle
b_{(N_{j-1},N_j)}(\eta,\overline{\eta}):=[\eta^{\mathcal{A}_{N_j}},\overline{\eta}]\equiv
\left[\left[e,[\eta^{\mathcal{A}_{N_j}},\overline{\eta}]\right]_{N_j}
\right]_{N_{(j-1,j)}},$ where $\eta^{\mathcal{A}_{N_j}}$ is the horizontal
lift of $\eta$ in the bundle $N_{j-1}\rightarrow N_{(j-1,j)}$ at the neutral element.
Note that $\eta^{\mathcal{A}_{N_j}} \in \mathfrak{n}_{j-1}$ and
$\mathfrak{n}_j \subseteq   \mathfrak{n}_{j-1}$ while
 the Lie bracket
$[\eta^{\mathcal{A}_{N_j}},\overline{\eta}]$ is the Lie bracket in $\mathfrak{n}_{j-1}$.

Similarly, we consider
$a_{(N_{j-1},N_j)}:\mathfrak{n}_{(j-1,j)}\times\mathfrak{n}_{(j-1,j)}\rightarrow\mathfrak{n}_j$,
$j = 1,...,n+1$, defined by
\vspace{.1cm} \newline \vspace{.1cm}

$\displaystyle
a_{(N_{j-1},N_j)}(\eta,\overline{\eta}):=-\mathcal{A}_{N_j}(e)\left([\eta^{\mathcal{A}_{N_{j}}},
\overline{\eta}^{\mathcal{A}_{N_{j}}}]\right)\equiv\left[\left[e,-\mathcal{A}_{N_{j}}(e)\left([
\eta^{\mathcal{A}_{N_{j}}},
\overline{\eta}^{\mathcal{A}_{N_{j}}}]\right)\right]_{N_{j}}\right]_{N_{(j-1,j)}}.$

\vspace{.3cm} The map $b_{(N_{j-1},N_j)}$ is identified with a
quotient vertical connection since

\vspace{.2cm}
$\left[\left[e,[\eta^{\mathcal{A}_{N_j}},\overline{\eta}]\right]_{N_j}
\right]_{N_{(j-1,j)}}\equiv[\nabla^{(\mathcal{A}_{N_j},V)}]_{N_{(j-1,j)},\eta}\
\overline{\eta}.$

\vspace{.1cm}
On the other hand, the map $a_{(N_{j-1},N_j)}$ is related
directly with the curvature of the connection $\mathcal{A}_{N_{j}}$,
as follows \vspace{.1cm}\newline $\displaystyle
-\mathcal{A}_{N_{j}}(e)\left([\eta^{\mathcal{A}_{N_j}},
\overline{\eta}^{\mathcal{A}_{N_{j}}}]\right)=B^{\mathcal{A}_{N_{j}}}(e)(\eta^{\mathcal{A}_{N_{j}}},
\overline{\eta}^{\mathcal{A}_{N_{j}}}).$

\vspace{.5cm}

Then we have the following formula for the Lie bracket on
$\mathfrak{g}:$

\begin{align} \label{formcorchetenetapas}
\left[\bigoplus_{i=0}^{n}\eta^{(i,i+1)},\bigoplus_{j=0}^{n}
\overline{\eta}^{(j,j+1)}\right]=\bigoplus_{i=0}^{n}\left([\eta^{(i,i+1)},
\overline{\eta}^{(i,i+1)}]+ \sum_{j=0}^{i-1}\left(-a_{(N_j,N_{j+1})}^{(i,i+1)}(\eta^{(j,j+1)},\overline{\eta}^{(j,j+1)})+\right.\right.
\hspace{11cm} \nonumber \\
\left.\left.
b_{(N_j,N_{j+1})}^{(i,i+1)}\left(\eta^{(j,j+1)},\sum_{k=j+1}^{n}
\overline{\eta}^{(k,k+1)}\right)-b_{(N_j,N_{j+1})}^{(i,i+1)}\left(\overline{\eta}^{(j,j+1)},\sum_{l=j+1}^{n}
\eta^{(l,l+1)}\right) \right)\right), \hspace{12cm}
\end{align}

\vspace{.2cm}

where by definition, for $i = 0,...,n$ and $j = 0,...,n$,
 $b_{(N_{j},N_{j+1})}^{(i,i+1)}$ and $a_{(N_{j},N_{j+1})}^{(i,i+1)}$ are, respectively, the components
of $b_{(N_{j},N_{j+1})}$ and $a_{(N_{j},N_{j+1})}$
on $\mathfrak{n}_{(i,i+1)}$.
\end{thm}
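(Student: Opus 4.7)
The plan is to prove (\ref{formcorchetenetapas}) by induction on the length $n$ of the chain, using (\ref{corchg2etapasbilineales}) as both the base case and the main engine of the inductive step. For $n=1$ the formula reduces directly to (\ref{corchg2etapasbilineales}): there is only one non-trivial summand $\mathfrak{n}_{(1,2)} = \mathfrak{n}_1$, the bilinear forms $b_{(N_0,N_1)}$ and $a_{(N_0,N_1)}$ already land in that summand so the superscript projections are the identity, and the $i=0$ term of (\ref{formcorchetenetapas}) has an empty inner sum and contributes $[\eta^{(0,1)},\overline{\eta}^{(0,1)}]$.

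For the inductive step I would peel off the outermost quotient. Assuming (\ref{formcorchetenetapas}) holds for every chain of length $n-1$, I write $\mathfrak{g} = \mathfrak{n}_{(0,1)} \oplus \mathfrak{n}_1$ and decompose the two inputs as $\eta^{(0,1)} \oplus \zeta$ and $\overline{\eta}^{(0,1)} \oplus \overline{\zeta}$, where $\zeta = \sum_{i=1}^{n}\eta^{(i,i+1)}$ and $\overline{\zeta} = \sum_{i=1}^{n}\overline{\eta}^{(i,i+1)}$ lie in $\mathfrak{n}_1$. Applying (\ref{corchg2etapasbilineales}) with $N = N_1$ produces the $\mathfrak{n}_{(0,1)}$-component $[\eta^{(0,1)},\overline{\eta}^{(0,1)}]$, which is exactly the $i=0$ summand of the right-hand side, plus an $\mathfrak{n}_1$-component
\begin{equation*}
b_{(N_0,N_1)}(\eta^{(0,1)},\overline{\zeta}) - b_{(N_0,N_1)}(\overline{\eta}^{(0,1)},\zeta) - a_{(N_0,N_1)}(\eta^{(0,1)},\overline{\eta}^{(0,1)}) + [\zeta,\overline{\zeta}]_{\mathfrak{n}_1}.
\end{equation*}
Projecting the first three terms onto each $\mathfrak{n}_{(i,i+1)}$ with $i \geq 1$ yields exactly the $j=0$ contribution to the $i$-th summand of (\ref{formcorchetenetapas}).

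The remaining bracket $[\zeta,\overline{\zeta}]_{\mathfrak{n}_1}$ is then handled by the inductive hypothesis applied to the chain $N_1 \vartriangleright N_2 \vartriangleright \ldots \vartriangleright N_{n+1} = \{e\}$ of length $n-1$ inside $N_1$. A key verification is that the principal connections $\mathcal{A}_{N_j}$ for $j \geq 2$ and the bilinear forms $b_{(N_{j-1},N_j)}$, $a_{(N_{j-1},N_j)}$ produced from this shifted chain coincide with those of the original chain, since all of them are built from the restriction of the same $G$-invariant metric to $N_{j-1}$, which is automatically $N_{j-1}$-invariant. After the re-indexing $(i',j') \mapsto (i'+1, j'+1)$, the inductive formula for $[\zeta,\overline{\zeta}]_{\mathfrak{n}_1}$ supplies, for each $i \geq 1$, the bracket $[\eta^{(i,i+1)},\overline{\eta}^{(i,i+1)}]$ together with the $j \geq 1$ terms of the inner sum, matching (\ref{formcorchetenetapas}) exactly.

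The main obstacle is not conceptual but bookkeeping: one must check that no term is double-counted or missed when the contributions from the outer two-stage expansion are combined with those coming from the inductive hypothesis, so that every pair $(i,j)$ with $0 \leq j < i \leq n$ appears with the right coefficient in the right summand. This reduces to a careful use of the compatibility conditions $\mathfrak{n}_{j-1} = \mathfrak{n}_{(j-1,j)} \oplus \mathfrak{n}_j$ and the intrinsic character of the connection data described above.
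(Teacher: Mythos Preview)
Your proposal is correct and follows essentially the same approach as the paper: induction on $n$, with the base case $n=1$ reducing to (\ref{corchg2etapasbilineales}), and the inductive step obtained by applying (\ref{corchg2etapasbilineales}) to the outermost splitting $\mathfrak{g}=\mathfrak{n}_{(0,1)}\oplus\mathfrak{n}_1$ and then invoking the inductive hypothesis on $[\zeta,\overline{\zeta}]_{\mathfrak{n}_1}$. Your explicit remark that the connections $\mathcal{A}_{N_j}$ and the bilinear forms $a_{(N_{j-1},N_j)}$, $b_{(N_{j-1},N_j)}$ for the shifted chain agree with those of the original chain (because they are all built from the restriction of the same $G$-invariant metric) is a point the paper uses tacitly, so your write-up is in that respect slightly more careful.
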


\begin{proof}

\vspace{.5cm} We shall prove this formula by induction.

\vspace{.3cm} If $n=1$ then formula (\ref{formcorchetenetapas}) becomes
$$[\eta^{(0,1)}\oplus\eta^{(1,2)},\overline{\eta}^{(0,1)}\oplus\overline{\eta}^{(1,2)}]=[\eta^{(0,1)},
\overline{\eta}^{(0,1)}]\oplus[
\eta^{(1,2)},\overline{\eta}^{(1,2)}]-a_{(N_0,N_1)}^{(1,2)}(\eta^{(0,1)},\overline{\eta}^{(0,1)})+$$
$$\hspace{4cm} b_{(N_0,N_1)}^{(1,2)}(\eta^{(0,1)},\overline{\eta}^{(1,2)})-
b_{(N_0,N_1)}^{(1,2)}(\overline{\eta}^{(0,1)},
\eta^{(1,2)}) ,$$
and this expression coincides with equation (\ref{corchg2etapasbilineales})
for the case $G=N_0,$  $N_1\equiv N,$
$b_{(N_0,N_1)}\equiv b_N$ and $N_2=\{e\},$ where
$b^{(1,2)}_{(N_0,N_1)}= b_{(N_0,N_1)}\equiv b_N$
 and $a^{(1,2)}_{(N_0,N_1)}=
a_{(N_0,N_1)}\equiv a_N.$

\vspace{.5cm} Now, we shall suppose that formula (\ref{formcorchetenetapas}) is valid in the
case in which we have $n-1$ normal subgroups and we shall see that
the formula holds when the number of subgroups is $n$.

\vspace{.2cm}
Using the formula (\ref{corchg2etapasbilineales}) that we have obtained in the case of one
normal subgroup, with $N_1 \lhd G$ whose Lie algebra can be written as
$\displaystyle\mathfrak{n}_1 = \bigoplus_{i=0}^{1}\mathfrak{n}_{(i,i+1)},$
we have
$$\left[\bigoplus_{i=0}^{n}\eta^{(i,i+1)},\bigoplus_{j=0}^{n}\overline{\eta}^{(j,j+1)}\right]=[\eta^{(0,1)},
\overline{\eta}^{(0,1)}]
\oplus\left(
\left[\bigoplus_{i=1}^{n}\eta^{(i,i+1)},\bigoplus_{j=1}^{n}\overline{\eta}^{(j,j+1)}\right]+\right.$$

\begin{eqnarray}\label{corcheten}
\left. b_{(N_0,N_1)}\left(\eta^{(0,1)},\bigoplus_{j=1}^{n}\overline{\eta}^{(j,j+1)}\right)-b_{(N_0,N_1)}
\left(\overline{
\eta}^{(0,1)},\bigoplus_{i=1}^{n}\eta^{(i,i+1)}\right)-a_{(N_0,N_1)}\left(\eta^{(0,1)},\overline{\eta}^{(0,1)}\right)
\right).
\end{eqnarray}

Using the inductive hypothesis we obtain
\newline
\begin{equation*}\label{corchetenmenosuno}
\left[\bigoplus_{i=1}^{n}\eta^{(i,i+1)},\bigoplus_{j=1}^{n}\overline{\eta}^{(j,j+1)}\right]= \bigoplus_{i=1}^n
\left(\left[\eta^{(i,i+1)},\overline{\eta}^{(i,i+1)}\right]+\right.
\end{equation*}
\begin{equation*}
\left.
\sum_{j=1}^{i-1}
\left(-a_{(N_j,N_{j+1})}^{(i,i+1)}(\eta^{(j,j+1)},\overline{\eta}^{(j,j+1)})+b_{(N_j,N_j+1)}^{(i,i+1)}\left( \eta^{(j,j+1)},
\sum_{k=j+1}^{n}\overline{\eta}^{(k,k+1)}\right)\right.\right.
\end{equation*}
\begin{equation*}
\left.\left.
-b_{(N_j,N_{j+1})}^{(i,i+1)}\left(\overline{\eta}^{(j,j+1)},\sum_{l=j+1}^{n}
\eta^{(l,l+1)}\right)\right)\right).
\end{equation*}

\vspace{.3cm} Replacing this expression in (\ref{corcheten}) and considering that $\displaystyle b_{(N_0,N_1)}(\eta,\overline{\eta})=\bigoplus_{p=1}^n b_{(N_0,N_1)}^{(p,p+1)}(\eta,\overline{\eta})$ and $\displaystyle a_{(N_0,N_1)}(\eta,\overline{\eta})=\bigoplus_{p=1}^n a_{(N_0,N_1)}^{(p,p+1)}(\eta,\overline{\eta}),$ we obtain

\begin{equation*}
\left[\bigoplus_{i=0}^{n}\eta^{(i,i+1)},\bigoplus_{j=0}^{n}\overline{\eta}^{(j,j+1)}\right]=[\eta^{(0,1)},
\overline{\eta}^{(0,1)}]
\oplus\left(
\bigoplus_{i=1}^n
\left(\left[\eta^{(i,i+1)},\overline{\eta}^{(i,i+1)}\right]+\right.\right.
\end{equation*}
\begin{equation*}
\displaystyle\left.\left.\sum_{j=1}^{i-1}
\left(-a_{(N_j,N_{j+1})}^{(i,i+1)}(\eta^{(j,j+1)},\overline{\eta}^{(j,j+1)})+
\right.\right.\right.
\end{equation*}
\begin{equation*}
\left.\left.\left.
b_{(N_j,N_j+1)}^{(i,i+1)}
\left(\eta^{(j,j+1)},
\sum_{k=j+1}^{n}\overline{\eta}^{(k,k+1)}\right)-b_{(N_j,N_{j+1})}^{(i,i+1)}\left(\overline{\eta}^{(j,j+1)},\sum_{l=j+1}^{n}
\eta^{(l,l+1)}\right)\right)\right)\right)+
\end{equation*}
\begin{equation*}
\bigoplus_{p=1}^n \left(b_{(N_0,N_1)}^{(p,p+1)}\left(\eta^{(0,1)},\sum_{s=1}^{n}\overline{\eta}^{(s,s+1)}\right)
- b_{(N_0,N_1)}^{(p,p+1)}
\left(\overline{
\eta}^{(0,1)},\sum_{m=1}^{n}\eta^{(m,m+1)}\right)-
a_{(N_0,N_1)}^{(p,p+1)}\left(\eta^{(0,1)},\overline{\eta}^{(0,1)}\right)
\right).
\end{equation*}

\vspace{.3cm}
Replacing the index  $p$ by  $i$ we can group terms together and we obtain

\begin{equation*}
\left[\bigoplus_{i=0}^{n}\eta^{(i,i+1)},\bigoplus_{j=0}^{n}\overline{\eta}^{(j,j+1)}\right]=[\eta^{(0,1)},
\overline{\eta}^{(0,1)}]
\oplus\left(
\bigoplus_{i=1}^n
\left(\left[\eta^{(i,i+1)},\overline{\eta}^{(i,i+1)}\right]+\right.\right.
\end{equation*}
\begin{equation*}
\displaystyle\left.\left.
\sum_{j=1}^{i-1}
\left(-a_{(N_j,N_{j+1})}^{(i,i+1)}(\eta^{(j,j+1)},\overline{\eta}^{(j,j+1)})+
\right.\right.\right.
\end{equation*}
\begin{equation*}
\left.\left.\left.
b_{(N_j,N_j+1)}^{(i,i+1)}
\left(\eta^{(j,j+1)},\sum_{k=j+1}^{n}\overline{\eta}^{(k,k+1)}\right)-b_{(N_j,N_{j+1})}^{(i,i+1)}
\left(\overline{\eta}^{(j,j+1)},\sum_{l=j+1}^{n}\eta^{(l,l+1)}\right)\right)
\right.\right.
\end{equation*}
\begin{equation*}
\left.\left.
-
a_{(N_0,N_1)}^{(i,i+1)}\left(\eta^{(0,1)},\overline{\eta}^{(0,1)}\right)+b_{(N_0,N_1)}^{(i,i+1)}\left(\eta^{(0,1)},\sum_{s=1}^{n}\overline{\eta}^{(s,s+1)}\right)
- b_{(N_0,N_1)}^{(i,i+1)}
\left(\overline{\eta}^{(0,1)},\sum_{m=1}^{n}\eta^{(m,m+1)}\right)\right)\right)=
\end{equation*}


\begin{equation*}
[\eta^{(0,1)},
\overline{\eta}^{(0,1)}]
\oplus\left(
\bigoplus_{i=1}^n
\left(\left[\eta^{(i,i+1)},\overline{\eta}^{(i,i+1)}\right]+
\sum_{j=0}^{i-1}
\left(-a_{(N_j,N_{j+1})}^{(i,i+1)}(\eta^{(j,j+1)},\overline{\eta}^{(j,j+1)})+
\right.\right.\right.
\end{equation*}
\begin{equation*}
\left.\left.\left.
b_{(N_j,N_j+1)}^{(i,i+1)}
\left(\eta^{(j,j+1)},\sum_{k=j+1}^{n}\overline{\eta}^{(k,k+1)}\right)-b_{(N_j,N_{j+1})}^{(i,i+1)}
\left(\overline{\eta}^{(j,j+1)},\sum_{l=j+1}^{n}\eta^{(l,l+1)}\right)\right)
\right)\right)=
\end{equation*}

\begin{equation*}
\bigoplus_{i=0}^n
\left(\left[\eta^{(i,i+1)},\overline{\eta}^{(i,i+1)}\right]+
\sum_{j=0}^{i-1}
\left(-a_{(N_j,N_{j+1})}^{(i,i+1)}(\eta^{(j,j+1)},\overline{\eta}^{(j,j+1)})+
\right.\right.
\end{equation*}
\begin{equation*}
\left.\left.
b_{(N_j,N_j+1)}^{(i,i+1)}
\left(\eta^{(j,j+1)},\sum_{k=j+1}^{n}\overline{\eta}^{(k,k+1)}\right)-b_{(N_j,N_{j+1})}^{(i,i+1)}
\left(\overline{\eta}^{(j,j+1)},\sum_{l=j+1}^{n}\eta^{(l,l+1)}\right)
\right)\right).
\end{equation*}

\vspace{.5cm} In other words,

\begin{eqnarray*}
\left[\bigoplus_{i=0}^{n}\eta^{(i,i+1)},\bigoplus_{j=0}^{n}
\overline{\eta}^{(j,j+1)}\right]=\bigoplus_{i=0}^{n}\left([\eta^{(i,i+1)},
\overline{\eta}^{(i,i+1)}]+ \sum_{j=0}^{i-1}\left(-a_{(N_j,N_{j+1})}^{(i,i+1)}(\eta^{(j,j+1)},\overline{\eta}^{(j,j+1)})+
\right.\right.\hspace{11cm}\nonumber \\
\left.\left. b_{(N_j,N_{j+1})}^{(i,i+1)}\left(\eta^{(j,j+1)},\sum_{k=j+1}^{n}
\overline{\eta}^{(k,k+1)}\right)-b_{(N_j,N_{j+1})}^{(i,i+1)}\left(\overline{\eta}^{(j,j+1)},\sum_{l=j+1}^{n}
\eta^{(l,l+1)}\right) \right)\right). \hspace{10cm}
\end{eqnarray*}

\end{proof}

\vspace{1cm}
In particular, for  $n=2$ and  $n=3,$
we obtain the following expressions

$$
\left[\eta^{(0,1)}\oplus\eta^{(1,2)}\oplus\eta^{(2,3)},\overline{\eta}^{(0,1)}\oplus\overline{\eta}^{(1,2)}\oplus
\overline{\eta}^{(2,3)}\right]
=[\eta^{(0,1)},\overline{\eta}^{(0,1)}]\oplus[\eta^{(1,2)},\overline{\eta}^{(1,2)}]+b_{(N_0,N_1)}^{(1,2)}(\eta^{(0,1)},
\overline{\eta}^{(1,2)}) +$$
$$ b_{(N_0,N_1)}^{(1,2)}(\eta^{(0,1)},\overline{\eta}^{(2,3)})-
b_{(N_0,N_1)}^{(1,2)}(\overline{\eta}^{(0,1)},\eta^{(1,2)})-
b_{(N_0,N_1)}^{(1,2)}(\overline{\eta}^{(0,1)},\eta^{(2,3)})-
a_{(N_0,N_1)}^{(1,2)}(\eta^{(0,1)},\overline{\eta}^{(0,1)})$$ $$\oplus[\eta^{(2,3)},\overline{\eta}^{(2,3)}]+
b_{(N_0,N_1)}^{(2,3)}(\eta^{(0,1)},\overline{\eta}^{(1,2)})+b_{(N_0,N_1)}^{(2,3)}(\eta^{(0,1)},\overline{\eta}^{(2,3)})
-b_{(N_0,N_1)}^{(2,3)}
(\overline{\eta}^{(0,1)},\eta^{(1,2)})$$
$$-b_{(N_0,N_1)}^{(2,3)}(\overline{\eta}^{(0,1)},\eta^{(2,3)})
-a_{(N_0,N_1)}^{(2,3)}
(\eta^{(0,1)},\overline{\eta}^{(0,1)})+
b_{(N_1,N_2)}^{(2,3)}(\eta^{(1,2)},\overline{\eta}^{(2,3)}) $$
$$-b_{(N_1,N_2)}^{(2,3)}(\overline{\eta}^{(1,2)},\eta^{(2,3)})-
a_{(N_1,N_2)}^{(2,3)}(\eta^{(1,2)},\overline{\eta}^{(1,2)}),
$$\\

and

$$\left[\eta^{(0,1)}\oplus\eta^{(1,2)}\oplus\eta^{(2,3)}\oplus\eta^{(3,4)},\overline{\eta}^{(0,1)}\oplus\overline{\eta}^{(1,2)}
\oplus\overline{\eta}^{(2,3)}\oplus\overline{\eta}^{(3,4)}\right]=[\eta^{(0,1)},\overline{\eta}^{(0,1)}]\oplus[\eta^{(1,2)},
\overline{\eta}^{(1,2)}]+$$
$$b^{(1,2)}_{(N_0,N_1)}(\eta^{(0,1)},\overline{\eta}^{(1,2)})+b^{(1,2)}_{(N_0,N_1)}(\eta^{(0,1)},
\overline{\eta}^{(2,3)})+b^{(1,2)}_{(N_0,N_1)}(\eta^{(0,1)},\overline{\eta}^{(3,4)})-b^{(1,2)}_{(N_0,N_1)}(\overline{\eta}^{(0,1)},
\overline{\eta}^{(1,2)})$$ $$-b^{(1,2)}_{(N_0,N_1)}(\overline{\eta}^{(0,1)},\overline{\eta}^{(2,3)})-b^{(1,2)}_{(N_0,N_1)}(
\overline{\eta}^{(0,1)},
\overline{\eta}^{(3,4)})-a^{(1,2)}_{(N_0,N_1)}(\eta^{(0,1)},\overline{\eta}^{(0,1)})\oplus
b^{(2,3)}_{(N_0,N_1)}(\eta^{(0,1)},\overline{\eta}^{(1,2)})+$$
$$b^{(2,3)}_{(N_0,N_1)}(\eta^{(0,1)},\overline{\eta}^{(2,3)})+
b^{(2,3)}_{(N_0,N_1)}
(\eta^{(0,1)},\overline{\eta}^{(3,4)})-b^{(2,3)}_{(N_0,N_1)}(\overline{\eta}^{(0,1)},\eta^{(1,2)})-b^{(2,3)}_{(N_0,N_1)}(\overline{\eta}^{(0,1)},\eta^{(2,3)})
$$
$$-b^{(2,3)}_{(N_0,N_1)}(\overline{\eta}^{(0,1)},\eta^{(3,4)})-a^{(2,3)}_{(N_0,N_1)}(\eta^{(0,1)},\overline{\eta}^{(0,1)})+[\eta^{(2,3)},
\overline{\eta}^{(2,3)}]+ b^{(2,3)}_{(N_1,N_2)}(\eta^{(1,2)},\overline{\eta}^{(2,3)})+$$ $$b^{(2,3)}_{(N_1,N_2)}(\eta^{(1,2)},\overline{\eta}^{(3,4)})
-b^{(2,3)}_{(N_1,N_2)}(\overline{\eta}^{(1,2)},\eta^{(2,3)})-b^{(2,3)}_{(N_1,N_2)}(\overline{\eta}^{(1,2)},\eta^{(3,4)})
-a^{(2,3)}_{(N_1,N_2)}(\eta^{(1,2)},\overline{\eta}^{(1,2)})$$
$$\oplus[\eta^{(3,4)},\overline{\eta}^{(3,4)}]+
b^{(3,4)}_{(N_0,N_1)}(\eta^{(0,1)},\overline{\eta}^{(1,2)})+b^{(3,4)}_{(N_0,N_1)}(\eta^{(0,1)},\overline{\eta}^{(2,3)})+
b^{(3,4)}_{(N_0,N_1)}(\eta^{(0,1)},
\overline{\eta}^{(3,4)})$$
$$-b^{(3,4)}_{(N_0,N_1)}(\overline{\eta}^{(0,1)},\eta^{(1,2)})-b^{(3,4)}_{(N_0,N_1)}(\overline{\eta}^{(0,1)}, \eta^{(2,3)})
-b^{(3,4)}_{(N_0,N_1)}(\overline{\eta}^{(0,1)},\eta^{(3,4)})-a^{(3,4)}_{(N_0,N_1)}(\eta^{(0,1)},\overline{\eta}^{(0,1)})+$$
$$ b^{(3,4)}_{(N_1,N_2)}(\eta^{(1,2)},\overline{\eta}^{(2,3)})+b^{(3,4)}_{(N_1,N_2)}(\eta^{(1,2)},\overline{\eta}^{(3,4)})
-b^{(3,4)}_{(N_1,N_2)}(\overline{\eta}^{(1,2)},\eta^{(2,3)})-b^{(3,4)}_{(N_1,N_2)}(\overline{\eta}^{(1,2)},\eta^{(3,4)})
$$
$$-a^{(3,4)}_{(N_1,N_2)}(\eta^{(1,2)},\overline{\eta}^{(1,2})+b^{(3,4)}_{(N_2,N_3)}(\eta^{(2,3)},\overline{\eta}^{(3,4)})
-b^{(3,4)}_{(N_2,N_3)}(\overline{\eta}^{(2,3)},\eta^{(3,4)})-a^{(3,4)}_{(N_2,N_3)}(\eta^{(2,3)},\overline{\eta}^{(2,3)}).
$$

\section{The Euler-d'Alembert-Poincar{\'e} equations by several stages} \label{sect:Euler-d'Alem-Poin}

\vspace{.5cm} \hspace{.5cm} For Euler-Poincar\'e equations and Euler-d'Alembert-Poincar\'e equations, see \cite{marsden3}  and  \cite{CMR01b}. These equations are the particular case of
Lagrange-Poincar{\'e} equations and Lagrange-d'Alembert-Poincar{\'e} equations, respectively, for the case in which $Q=G$ is the trivial bundle whose base is a single point. In this section we are going to study the Euler-d'Alembert-Poincar{\'e} equations by \textit{several}  stages and in section \ref{sect:Lag-d'Alem-Poin} we will study the case of a trivial bundle $Q = X \times G.$ From this, the case of a general bundle $Q$ will follow by glueing together principal bundle charts.

\subsection{The Euler-Poincar{\'e} equations by several stages.}

We are going to use the notation of theorem \ref{formcorchete}.

\vspace{.5cm}
Let $\displaystyle
v(t)=\bigoplus_{i=0}^{n}\eta^{(i,i+1)}(t)\in\mathfrak{g}$
be a curve and consider a variation of this curve \newline $\displaystyle\delta v(t)=\bigoplus_{i=0}^n\delta\eta^{(i,i+1)}(t).$
In order to write Euler-Poincar\'{e} equations
we need to calculate variations $\delta v(t)$ satisfying $\delta v(t)=\dot{\omega}(t)+[v(t),\omega(t)]$, where
$\omega(t)\in\mathfrak{g}$ is a curve such that
$\omega(t_0)=\omega(t_1)=0.$
Let us write
\begin{eqnarray}\label{omega}
\omega(t)=\bigoplus_{j=0}^{n}\xi^{(j,j+1)}(t)
\end{eqnarray}
with $\xi^{(j,j+1)}(t)\in\mathfrak{n}_{(j,j+1)},$ such that
$\xi^{(j,j+1)}(t_0)=\xi^{(j,j+1)}(t_1)=0,\
\forall\ 0\leq j\leq n.$
Then,

\begin{eqnarray*}
\delta v(t) &=& \bigoplus_{i=0}^n\delta\eta^{(i,i+1)}(t)=\dot{\omega}+[v,\omega]=\left(\bigoplus_{j=0}^{n}\dot{\xi}^{(j,j+1)}(t)\right)+
\left[\bigoplus_{i=0}^{n}\eta^{(i,i+1)}(t),
\bigoplus_{j=0}^{n}\xi^{(j,j+1)}(t)\right] \nonumber \\
 &=& \left(\bigoplus_{j=0}^{n}\dot{\xi}^{(j,j+1)}(t)\right)+
\bigoplus_{i=0}^{n}\left([\eta^{(i,i+1)},
\xi^{(i,i+1)}]+ \sum_{j=0}^{i-1}\left(-a_{(N_j,N_{j+1})}^{(i,i+1)}(\eta^{(j,j+1)},\xi^{(j,j+1)})+ \right.\right.\nonumber \\
& & \left.\left.
 b_{(N_j,N_{j+1})}^{(i,i+1)}\left(\eta^{(j,j+1)},\sum_{k=j+1}^{n}
\xi^{(k,k+1)}\right)-b_{(N_j,N_{j+1})}^{(i,i+1)}\left(\xi^{(j,j+1)},\sum_{l=j+1}^{n}
\eta^{(l,l+1)}\right) \right)\right)=\nonumber \\
& & =\bigoplus_{i=0}^{n}\left(\dot{\xi}^{(i,i+1)}(t)+[\eta^{(i,i+1)},
\xi^{(i,i+1)}]+ \sum_{j=0}^{i-1}\left(-a_{(N_j,N_{j+1})}^{(i,i+1)}(\eta^{(j,j+1)},\xi^{(j,j+1)})+ \right.\right.\nonumber \end{eqnarray*}
\begin{eqnarray*}
& & \left.\left.
 b_{(N_j,N_{j+1})}^{(i,i+1)}\left(\eta^{(j,j+1)},\sum_{k=j+1}^{n}
\xi^{(k,k+1)}\right)-b_{(N_j,N_{j+1})}^{(i,i+1)}\left(\xi^{(j,j+1)},\sum_{l=j+1}^{n}
\eta^{(l,l+1)}\right) \right)\right)
\end{eqnarray*}

\vspace{.5cm} From the previous identities we can deduce the following equalities, for $0\leq i\leq n:$

\begin{eqnarray*}
  \delta\eta^{(i,i+1)}(t) & = & \dot{\xi}^{(i,i+1)}(t)+[\eta^{(i,i+1)},
\xi^{(i,i+1)}]+ \sum_{j=0}^{i-1}\left(-a_{(N_j,N_{j+1})}^{(i,i+1)}(\eta^{(j,j+1)},\xi^{(j,j+1)})+ \right.
\end{eqnarray*}
\begin{eqnarray}\label{variacEPN}
& & \left.
 b_{(N_j,N_{j+1})}^{(i,i+1)}\left(\eta^{(j,j+1)},\sum_{k=j+1}^n
\xi^{(k,k+1)}\right)-b_{(N_j,N_{j+1})}^{(i,i+1)}\left(\xi^{(j,j+1)},\sum_{l=j+1}^{n}
\eta^{(l,l+1)}\right)\right).
\end{eqnarray}

\vspace{.5cm} Since we are assuming that
$\displaystyle\mathfrak{g}\equiv\bigoplus_{i=0}^{n}\mathfrak{n}_{(i,i+1)}$  we have
$\displaystyle\mathfrak{g}^*\equiv\bigoplus_{j=1}^{n}\mathfrak{n}^*_{(j,j+1)}.$

As before, let $v(t)\in\mathfrak{g}$ with $\displaystyle
v(t)=\bigoplus_{i=0}^{n}\eta^{(i,i+1)}(t),$ then
$\displaystyle \frac{\partial l}{\partial v}(t)\in\mathfrak{g}^*.$ Let

\begin{equation}\label{BetaComponents}
\beta(t)=\frac{\partial l}{\partial v}(t)=\bigoplus_{j=0}^{n}\frac{\partial
l}{\partial\eta^{(j,j+1)}}(t)=\bigoplus_{j=0}^{n}\beta_{(j,j+1)}(t),
\end{equation}
where $\beta_{(j,j+1)}(t)\in\mathfrak{n}^*_{(j,j+1)},$ for $0\leq j\leq n.$

\vspace{.5cm} Recall that the Euler-Poincar{\'e} equations
$\dot{\beta}=\text{ad}^*_v\beta$ ( or, equivalently,
$\dot{\beta}(\xi)=\langle\beta,[v,\xi]\rangle$) are obtained from
$\displaystyle\delta\int_{t_0}^{t_1}l(v)dt=0$ with $\delta
v=\dot{\omega}+[v,\omega],$
where $\omega(t)$
represents an arbitrary curve on the Lie algebra $\mathfrak{g}$
satisfying  $\omega(t_0) =  \omega(t_1) = 0.$
Using the decomposition of $v(t),$ the expression (\ref{omega}) for
$\omega$ and (\ref{BetaComponents})
for $\beta$, we obtain the Euler-Poincar\'{e} equations by the condition

$$\left\langle \bigoplus_{j=0}^{n}\dot{\beta}_{(j,j+1)},\bigoplus_{k=0}^{n}\xi^{(k,k+1)}\right\rangle=\left\langle\bigoplus_{m=0}^{n}
\beta_{(m,m+1)},
\left[\bigoplus_{i=0}^{n}\eta^{(i,i+1)},\bigoplus_{k=0}^{n}\xi^{(k,k+1)}\right]\right\rangle$$

\vspace{.2cm} for all $\xi^{(j,j+1)}\in\mathfrak{n}_{(j,j+1)}.$

\vspace{.5cm} Now using the
formula for the Lie bracket obtained in theorem \ref{formcorchete} we have

\begin{eqnarray*}
& & \sum_{j=0}^{n}\dot{\beta}_{(j,j+1)}(\xi^{(j,j+1)})=
\left\langle\bigoplus_{m=0}^{n}\beta_{(m,m+1)},\bigoplus_{i=0}^{n}\left([\eta^{(i,i+1)},
\xi^{(i,i+1)}]+ \right.\right.\nonumber\\
& & \left.\left. \displaystyle\sum_{j=0}^{i-1}
\left(-a_{(N_j,N_{j+1})}^{(i,i+1)}(\eta^{(j,j+1)},\xi^{(j,j+1)})+  b_{(N_j,N_{j+1})}^{(i,i+1)}\left(\eta^{(j,j+1)},\sum_{k=j+1}^{n}
\xi^{(k,k+1)}\right)\right.\right.\right.\nonumber \\
& & \left.\left.\left.
-b_{(N_j,N_{j+1})}^{(i,i+1)}\left(\xi^{(j,j+1)},\sum_{l=j+1}^{n}
\eta^{(l,l+1)}\right) \right)\right)\right\rangle,
\end{eqnarray*}
 for all $\xi^{(j,j+1)}\in\mathfrak{n}_{(j,j+1)}.$

\vspace{.3cm} Then, the following equivalent condition is obtained

\begin{eqnarray}\label{E-Pgral}
& & \sum_{j=0}^{n}\dot{\beta}_{(j,j+1)}(\xi^{(j,j+1)})=
\sum_{i=0}^n\left\langle\beta_{(i,i+1)},[\eta^{(i,i+1)},
\xi^{(i,i+1)}]+ \sum_{j=0}^{i-1}\left(-a_{(N_j,N_{j+1})}^{(i,i+1)}(\eta^{(j,j+1)},\xi^{(j,j+1)})+ \right.\right.\nonumber \\
& & \left.\left.
 b_{(N_j,N_{j+1})}^{(i,i+1)}\left(\eta^{(j,j+1)},\sum_{k=j+1}^{n}
\xi^{(k,k+1)}\right)-b_{(N_j,N_{j+1})}^{(i,i+1)}\left(\xi^{(j,j+1)},\sum_{l=j+1}^{n}
\eta^{(l,l+1)}\right) \right)\right\rangle,
\end{eqnarray}
 for all $\xi^{(j,j+1)}\in\mathfrak{n}_{(j,j+1)}.$

\vspace{.5cm}
 The terms $a_{(N_j,N_{j+1})}^{(i,i+1)}(\eta^{(j,j+1)},\xi^{(j,j+1)})$ vanishes since $\xi^{(j,j+1)}= 0$ because
 $j <i$. Then we have the \textit{\textbf{Euler-Poincar\'{e} equations by stages}}

\begin{equation}\label{E-Pstages1}
\left\{ \begin{array}{rcl}
\displaystyle \dot{\beta}_{(i,i+1)}\mid_{\mathfrak{n}(i,i+1)}&
=&\displaystyle \beta_{(i,i+1)}\left.\left([ \eta^{(i,i+1)},\ .\
]+\sum_{j=0}^{i-1}b^{(i,i+1)}_{(N_j,N_{j+1})}
(\eta^{(j,j+1)},\ .\ )\right)\right | _{\mathfrak{n}_{(i,i+1)}} \\
\beta_{(i,i+1)}&=&\displaystyle\frac{\partial l}{\partial \eta^{(i,i+1)}}(v)\\
\end{array}\right.
\end{equation}

\vspace{.5cm} for $0\leq i\leq n$ and $v\in\mathfrak{g},$ or equivalently,

\vspace{.3cm}
\begin{equation}\label{E-Pstages2}
\left\{ \begin{array}{rcl}
\displaystyle \dot{\beta}_{(i,i+1)}\mid_{\mathfrak{n}_{(i,i+1)}}&
=&\displaystyle (\text{ad}^*_{\eta^{(i,i+1)
}}\beta_{(i,i+1)})\mid_{\mathfrak{n}{(i,i+1)}}+ \\
& & \displaystyle\beta_{(i,i+1)}
\left.\left(\sum_{j=0}^{i-1}b^{(i,i+1)}_{(N_{j},N_{j+1})}(\eta^{(j,j+1)},\ .\ )\right)\right|_{\mathfrak{n}_{(i,i+1)}}  \\
\beta_{(i,i+1)}&=&\displaystyle\frac{\partial l}{\partial \eta^{(i,i+1)}}\\
\end{array}
\right.
\end{equation}

\vspace{.2cm} where $\eta^{(j,j+1)}\in\mathfrak{n}_{(j,j+1)}.$

\vspace{.7cm}
\textbf{Example.}
The simplest example of equations (\ref{E-Pstages2})  occurs when
 $G$  is a direct product $G=N_{(0,1)}\times N_{(1,2)}\times...\times
N_{(n,n+1)}.$
We choose our $G$-invariant metric on $G$ as being a product of invariant metrics on each factor.
Then each
$\mathcal{A}_{N_j}$ is the trivial connection in the product bundle
$N_{j-1} = N_{(j-1, j)} \times N_j \rightarrow N_{(j-1, j)}$.
As we have noted in the definition of $b_{(N_{j-1}, N_j)}$,
$\eta^{\mathcal{A}_{N_j}} \in \mathfrak{n}_{j-1}$ and
$\mathfrak{n}_j \subseteq   \mathfrak{n}_{j-1}$ while
 the Lie bracket
$[\eta^{\mathcal{A}_{N_j}},\overline{\eta}]$ is the Lie bracket in
$\mathfrak{n}_{j-1} = \mathfrak{n}_{(j-1,j)}\times \mathfrak{n}_j$. The triviality of
$\mathcal{A}_{N_j}$ implies that the component of $\eta^{\mathcal{A}_{N_j}}$  corresponding to the factor $\mathfrak{n}_j$ vanishes,
so in the case of the present example $b_{(N_{j-1}, N_j)} = 0$.

Then the Euler-Poincar{\'e} equations become

\vspace{.3cm} $\displaystyle \dot{\beta}_{(i,i+1)}\mid_{\mathfrak{n}(i,i+1)}
=\displaystyle \text{ad}^*_{\eta^{(i,i+1)}}\beta_{(i,i+1)}\mid_{\mathfrak{n}(i,i+1)},$ \hspace{1cm} with $\displaystyle\beta_{(i,i+1)}=\frac{\partial l}{\partial \eta^{(i,i+1)}}.$\\

In the particular case in which
$l (\eta^{(0,1)},...,\eta^{(n,n+1)}) = l_0 (\eta^{(0,1)}) +...+ l_{n} (\eta^{(n,n+1)}),$
where $\eta^{(i,i+1)} \in \mathfrak{n}_{(i,i+1)},$ $i = 0,...,n,$ the previous system of equations (\ref{E-Pstages2}) becomes uncoupled.
If, in addition,  some $N_{(i, i+1)},$ for
$i = 0,...,n,$
is abelian then the corresponding quantity
$\beta_{(i,i+1)}$ is a constant of motion.

\vspace{.3cm} \subsection{The Euler-d'Alembert-Poincar{\'e} equations by
several stages.}\label{EDPBSS}

\vspace{.5cm} In this section we are going to use some results and notation from
\cite{CMR01b}. Also, we are going to use the notation presented in
Appendix \ref{appendixb}, for the particular case
$Q = G.$

\vspace{.1cm} Let
$\displaystyle\mathfrak{g}\equiv\mathfrak{n}_{(0,1)}\oplus\mathfrak{n}_{(1,2)}\oplus...\oplus\mathfrak{n}_{(n-1,n)}\oplus\mathfrak{n}_{(n,n+1)}$
as before, and let $\mathcal{D}$ be a left invariant distribution on
$G,$
which represents the nonholonomic constraint.
In particular,
$\mathcal{D}_e\subset\mathfrak{g}.$
The vertical bundle $\mathcal{V}$
is such that $\mathcal{V}_e=T_eG\equiv \mathfrak{g}.$
We shall also assume the
dimension hypothesis (see equation \ref{hipdimension} in Appendix \ref{appendixb}).

To write the Euler-d'Alembert-Poincar\'e equations by several stages we
will consider an easy particular case first, and the general case
later.

\paragraph{Particular case.} We are going to assume that
$$\mathcal{D}:=\mathcal{D}_e\equiv\mathcal{D}_{(0,1)}\oplus\mathcal{D}_{(1,2)}\oplus
...\oplus\mathcal{D}_{(n-1,n)}\oplus\mathcal{D}_{(n,n+1)}$$ where
$\mathcal{D}_{(i,i+1)}=\mathcal{D}\cap\mathfrak{n}_{(i,i+1)},$ with
$0\leq i\leq n.$

\vspace{.5cm} Since $Q=G$ we have that
$\mathcal{S}=\mathcal{D}$ and, therefore,
\begin{equation} \label{descompSsumadirecta}
\mathcal{S}:=\mathcal{S}_e\equiv\mathcal{S}_{(0,1)}\oplus
\mathcal{S}_{(1,2)}\oplus...\oplus\mathcal{S}_{(n-1,n)}\oplus\mathcal{S}_{(n,n+1)}
\end{equation}
$\mathcal{S}_{(i,i+1)}=\mathcal{S}\cap\mathfrak{n}_{(i,i+1)}=\mathcal{D}_{(i,i+1)},$
for $0\leq i\leq n.$

\vspace{.5cm} The Euler-d'Alembert-Poincar{\'e} equations are
obtained in the following way:

Let us consider a curve
\begin{equation}\label{descompv}
v(t)=\bigoplus_{i=0}^{n}\eta^{(i,i+1)}(t)\in \mathcal{S},
\end{equation}
such that $\eta^{(i,i+1)}(t)\in\mathcal{S}_{(i,i+1)}.$
Let $\displaystyle\delta
v(t)=\bigoplus_{i=0}^n\delta\eta^{(i,i+1)}(t)\in
\mathfrak{g}$ be an allowed variation, that is, \newline$\delta
v(t)=\dot{\omega}(t)+[v(t),\omega(t)]$ with $\omega(t)\in\mathcal{S}$ such that
$\omega(t_0)=\omega(t_1)=0.$
Then we can write
$\displaystyle\omega(t)=\bigoplus_{j=0}^{n}\xi^{(j,j+1)}(t)$
where
$\xi^{(j,j+1)}(t)\in\mathcal{S}_{(j,j+1)},$ being
$\xi^{(j,j+1)}(t_0)=\varepsilon^{(j,j+1)}(t_1)=0,$ for
$0\leq j\leq n.$

\vspace{.3cm}
Then, as before, we have

\begin{eqnarray*}
\delta v(t) &=& \bigoplus_{i=0}^n\delta\eta^{(i,i+1)}(t)=\dot{\omega}+[v,\omega]=\left(\bigoplus_{j=0}^{n}\dot{\xi}^{(j,j+1)}(t)\right)+
\left[\bigoplus_{i=0}^{n}\eta^{(i,i+1)}(t),
\bigoplus_{j=0}^{n}\xi^{(j,j+1)}(t)\right] \nonumber \\
& & =\bigoplus_{i=0}^{n}\left(\dot{\xi}^{(i,i+1)}(t)+[\eta^{(i,i+1)},
\xi^{(i,i+1)}]+ \sum_{j=0}^{i-1}\left(-a_{(N_j,N_{j+1})}^{(i,i+1)}(\eta^{(j,j+1)},\xi^{(j,j+1)})+ \right.\right.\nonumber \end{eqnarray*}
\begin{eqnarray*}
& & \left.\left.
 b_{(N_j,N_{j+1})}^{(i,i+1)}\left(\eta^{(j,j+1)},\sum_{k=j+1}^{n}
\xi^{(k,k+1)}\right)-b_{(N_j,N_{j+1})}^{(i,i+1)}\left(\xi^{(j,j+1)},\sum_{l=j+1}^{n}
\eta^{(l,l+1)}\right) \right)\right),
\end{eqnarray*}

where $\xi^{(j,j+1)}(t)\in\mathcal{S}_{(j,j+1)},$ for $0\leq j\leq n.$

\vspace{.5cm}
It follows that

\begin{eqnarray}\label{variacEPN}
  \delta\eta^{(i,i+1)}(t) & = & \dot{\xi}^{(i,i+1)}(t)+[\eta^{(i,i+1)},
\xi^{(i,i+1)}]+ \sum_{j=0}^{i-1}\left(-a_{(N_j,N_{j+1})}^{(i,i+1)}(\eta^{(j,j+1)},\xi^{(j,j+1)})+ \right.\\
& & \left.
 b_{(N_j,N_{j+1})}^{(i,i+1)}\left(\eta^{(j,j+1)},\sum_{k=j+1}^n
\xi^{(k,k+1)}\right)-b_{(N_j,N_{j+1})}^{(i,i+1)}\left(\xi^{(j,j+1)},\sum_{l=j+1}^{n}
\eta^{(l,l+1)}\right)\right), \nonumber
\end{eqnarray}

for $\xi^{(j,j+1)}(t)\in\mathcal{S}_{(j,j+1)},$ $0\leq j\leq n.$

\vspace{.4cm}
 If $\displaystyle\mu=\frac{\partial l}{\partial
v}=\bigoplus_{j=0}^{n}\beta_{(j,j+1)}\in\mathfrak{g}^*$
and
$\displaystyle\xi=\bigoplus_{k=0}^{n}\xi^{(k,k+1)}\in\mathfrak{g},$
then the Euler-Poincar{\'e} equations
$\dot{\mu}(\xi)=\langle\mu,[v,\xi]\rangle$ are equivalent to the fact that the equality

\begin{eqnarray}
& & \sum_{j=0}^{n}\dot{\beta}_{(j,j+1)}(\xi^{(j,j+1)})=
\sum_{i=0}^n\left\langle\beta_{(i,i+1)},[\eta^{(i,i+1)},
\xi^{(i,i+1)}]+ \sum_{j=0}^{i-1}\left(-a_{(N_j,N_{j+1})}^{(i,i+1)}(\eta^{(j,j+1)},\xi^{(j,j+1)})+ \right.\right.\nonumber \\
& & \left.\left.
 b_{(N_j,N_{j+1})}^{(i,i+1)}\left(\eta^{(j,j+1)},\sum_{k=j+1}^{n}
\xi^{(k,k+1)}\right)-b_{(N_j,N_{j+1})}^{(i,i+1)}\left(\xi^{(j,j+1)},\sum_{l=j+1}^{n}
\eta^{(l,l+1)}\right) \right)\right\rangle,
\end{eqnarray}

are satisfied for all curves $\xi^{(j,j+1)}(t)\in\mathcal{S}_{(j,j+1)},$ $0\leq j\leq n.$

\vspace{.5cm} Finally,  for this particular case, we obtain the
following \textbf{Euler-d'Alembert-Poincar{\'e} equations by $n$
stages}, which are obtained taking for each $i = 0,...,n$, $\varepsilon^{(i, i+1)}$ arbitrary, while $\varepsilon^{(j, j+1)} = 0$, for $i \neq j$,

\begin{equation}\label{E-Pnohol}
\left\{ \begin{array}{rcl}
\displaystyle \dot{\beta}_{(i,i+1)}\mid_{\mathcal{S}_{(i,i+1)}}&
=&\displaystyle (\text{ad}^*_{\eta^{(i,i+1)
}}\beta_{(i,i+1)})\mid_{\mathcal{S}{(i,i+1)}}+ \beta_{(i,i+1)}
\left.\left(\sum_{j=0}^{i-1}b^{(i,i+1)}_{(N_{j},N_{j+1})}(\eta^{(j,j+1)},\ .\ )\right)\right|_{\mathcal{S}_{(i,i+1)}}  \\
\beta_{(i,i+1)}&=&\displaystyle\frac{\partial l}{\partial \eta^{(i,i+1)}}(v)\\
v&=&\displaystyle
\bigoplus_{i=1}^n\eta^{(i,i+1)}\in\mathcal{S}
\end{array}
\right.
\end{equation}

\vspace{.5cm}\paragraph{General case.} Now suppose that
$\displaystyle\mathcal{D}=\mathcal{S}\subseteq\mathfrak{g}\equiv\mathfrak{n}_{(0,1)}\oplus\mathfrak{n}_{(1,2)}\oplus
...\oplus\mathfrak{n}_{(n-1,n)}\oplus\mathfrak{n}_{(n,n+1)}$ is an
arbitrary subspace.
Let $m=\dim(\mathfrak{g})$ and
$d_i=\dim(\mathfrak{n}_{(i,i+1)}),$ for $0\leq i\leq n.$ Consider a
base $\{\overline{e}_1,...,\overline{e}_m\}$ of $\mathfrak{g}$
adapted to the previous decomposition of $\mathfrak{g}.$ Namely, consider
$\{\overline{e}_1,...,\overline{e}_m\}$ such that the first $d_0$
vectors of the base belong to $\mathfrak{n}_{(0,1)},$ the following $d_1$
vectors belong to $\mathfrak{n}_{(1,2)},$ the following $d_2$ to
$\mathfrak{n}_{(2,3)}$ and so on up to the last $d_n$ vectors
which should belong to $\mathfrak{n}_{(n,n+1)}.$

\vspace{.2cm} Let $(z^1,z^2,...,z^m)$ be the coordinate system in
$\mathfrak{g}$ associated to the base
$\{\overline{e}_1,...,\overline{e}_m\}.$ Then, if $\displaystyle
\xi\in\mathfrak{g},$ we have $\displaystyle \xi=\sum_{i=1}^m z^i
\overline{e}_i.$ In this case we will write
$$\displaystyle
\xi\equiv(z^1,...,z^{d_0},z^{d_0+1},...,z^{d_0+d_1},z^{d_0+d_1+1},...,z^{d_0+d_1+d_2},...,z^{m-d_n},...,z^m).$$

\vspace{.2cm} On the other hand, we can define the arbitrary
subspace $\displaystyle\mathcal{S}\subseteq\mathfrak{g}$ as being the
kernel of a linear function $\displaystyle
f:\mathfrak{g}\rightarrow
\mathbb{R}^{m-\dim(\mathcal{S})}.$
In coordinates we have that points
 $(z^1,z^2,...,z^m)$ belonging to
$\displaystyle\mathcal{S}$
are those which satisfy the system of equations
$f^j(z^1,z^2,...,z^m) = 0,$ for $j = 1,...,m-\dim(\displaystyle\mathcal{S}).$

From this system we can obtain $m-\dim(\displaystyle\mathcal{S})$ coordinates in terms of the rest $\dim(\displaystyle\mathcal{S})$ independent coordinates. More precisely,
we can represent (not necessarily in a unique way) $\mathcal{S}$ as
being the graph of a linear map, as follows. We can choose a set
of indexes $A\subseteq\{1,...,m\},$ with $\dim(\mathcal{S})$
elements, and linear functionals
\begin{equation}\label{coordenS}
\varphi_j(\langle z^i\rangle), \ i\in A,\ j\in\{1,...,m\}-A
\end{equation}
such that points of $\mathcal{S}$ are precisely
$(z^i,\varphi^j(\langle z^i\rangle)),$ with $i\in A$ and
$j\in\{1,...,m\}-A,$ where $z^i$ with $i\in A$ are independent
coordinates.

\vspace{.3cm}
We are going to denote
$\{\varepsilon^0_{r_0}\}=\{\varepsilon_1^0,...,\varepsilon_{s_0}^0\}=\{z^1,...,z^{d_0}\}\cap\{z^i:i\in A\},$ where $r_0 = 1,...,s_0$ and $s_0$ is the dimension of $\mathcal{S}_{(0,1)}$.

The set
$\{h^0_{s_0+1},...,h^0_{d_0-s_0}\}$ is defined by
$\displaystyle \{h^0_{s_0+1},...,h^0_{d_0-s_0}\} = \{z^1,...z^{d_0}\}-\{\varepsilon_1^0,...,\varepsilon_{s_0}^0\}.$

In an analogous way, we define
$\{\varepsilon^1_{r_1}\}=\{z^{d_0+1},...,z^{d_0+d_1}\}\cap\{z^i:i\in
A\},$ for \newline $1\leq r_1\leq s_1$ being $s_1$ the number of
independent variables in $(z^{d_0+1},...,z^{d_0+d_1}).$ Then, we define
$\{h^1_{l_1}\},$ for $s_1+1\leq l_1\leq d_1-s_1,$ by
$\{h^1_{s_1+1},...,h^1_{d_1-s_1}\} = \{z^{d_0+1},...,z^{d_0+d_1}\}-\{\varepsilon^1_1,...,\varepsilon^1_{s_1}\}.$

In the same way,
\newline $\{\varepsilon^2_{r_2}\}=\{z^{d_0+d_1+1},...,z^{d_0+d_1+d_2}\}\cap\{z^i:i\in
A\},$ for $1\leq r_2\leq s_2$ being $s_2$ the number of independent
variables in $(z^{d_0+d_1+1},...,z^{d_0+d_1+d_2}).$ Then, we define
$\{h^2_{l_2}\},$ for $s_2+1\leq l_2\leq d_0+d_1+d_2,$ by
$\{h^2_{s_2+1},...,h^2_{d_0+d_1+d_2}\} = \{z^{d_0+d_1+1},...,z^{d_0+d_1+d_2}\} - \{\varepsilon^2_1,...,\varepsilon^2_{s_2}\}.$

\vspace{.2cm} We continue this procedure to obtain
$\{\varepsilon^i_{r_i}\},$ for $0\leq i\leq n.$ In particular,
\newline $\displaystyle\{\varepsilon^n_{r_n}\}=\{z^{m-d_n},...,z^m\}\cap\{z^i:i\in
A\},$ for $1\leq r_n\leq s_n$ being $s_n$ the number of independent
variables in $(z^{m-d_n},...,z^m).$ Besides, $\{h^n_{l_n}\}$ for
$s_n+1\leq l_n\leq m,$
$\{h^n_{s_n+1},...,h^n_{d_n-s_n}\} = \{z^{m-n},...,z^m\} - \{\varepsilon^n_1,...,\varepsilon^n_{s_n}\}.$

\vspace{.5cm} Then we can write, equivalently, the linear dependence
$\varphi$ as follows:
\begin{eqnarray}
   h^i_{l_i}&=& h^i_{l_i}(\varepsilon^0_{r_0},...,\varepsilon^n_{r_n}),
\end{eqnarray}

where $ 1\leq r_p\leq s_p,$ for $0\leq p \leq n,$ and $s_i+1\leq l_i\leq d_i-s_i,$ for $0\leq i\leq n$.

\vspace{.2cm} Then a generic element
$\xi\equiv(z^1,...,z^m)$
belonging to
$\mathcal{S},$ after a reordering, can be represented as follows
\begin{equation}\label{elemAlgLie}
\xi=(\varepsilon^0_1,...,\varepsilon^0_{s_0},h^0_{s_0+1},...,h^0_{d_0-s_0},\varepsilon^1_1,...,\varepsilon^1_{s_1},h^1_{s_1+1},...,h^1_{d_1-s_1}
,...,\varepsilon^n_1,...,\varepsilon^n_{s_n},h^n_{s_n+1},...,h^n_{d_n-s_n}).
\end{equation}

For simplicity, we shall introduce the following notation.
First, we denote
$(\varepsilon^i_1,...,\varepsilon^i_{s_i}) = \varepsilon^i$ and
$(h^i_{s_i+1},...,h^i_{d_i-s_i}) = h^i,$ for $i = 0,...,n.$
We will define the subspaces $\mathcal{S}_i,$ for $i = 0,...,n,$ as follows
\begin{align}
\mathcal{S}_0
&= \{(\varepsilon^0, h^0(\varepsilon^0, 0,...,0), 0, h^1(\varepsilon^0, 0,...,0),...,0, h^n(\varepsilon^0, 0,...,0))\},\nonumber\\
\mathcal{S}_1
&= \{(0, h^0(0, \varepsilon^1,...,0), \varepsilon^1, h^1(0, \varepsilon^1,...,0),...,0, h^n(0, \varepsilon^1,...,0))\}, \nonumber \\
& ... \nonumber \\
\mathcal{S}_n
&= \{(0, h^0(0,..., \varepsilon^n), 0, h^1(0,..., \varepsilon^n),...,\varepsilon^n, h^n(0,..., \varepsilon^n))\}.\nonumber\\
\end{align}

Then we have that $\mathcal{S}=\mathcal{S}_0\oplus \mathcal{S}_1\oplus ...\oplus \mathcal{S}_n.$
Then we obtain the following decomposition of the Euler-d'Alembert-Poincare equations:

\begin{eqnarray}\label{sistdemus}
\left\{
\begin{array}{rl}
  \dot{\mu}_i
&=\displaystyle\bigoplus_{j,p}\left.\left(\text{ad}^*_{v_j}\mu_p\right)\right|_{\mathcal{S}_i}  \\
\vspace{.2cm}
\mu_i
&=\displaystyle\frac{\partial l}{\partial v_i}(v) \\
& \displaystyle v_j\in\mathcal{S}_j
\end{array}
\right.
\end{eqnarray}

where $i,j,p=0,...,n,$ which is, as usual, a differential-algebraic equation.

Then using the previous decomposition \ref{elemAlgLie} for $\psi$ with $v\equiv\psi,$ in coordinates we have

\begin{eqnarray*}
\mu_0 &=&\frac{\partial l}{\partial v_0}(v)=\left(\frac{\partial l}{\partial z^1},...,\frac{\partial l}{\partial z^{d_0}}\right)(v)=\left(\frac{\partial l}{\partial z^1}(z^1,...,z^m),...,\frac{\partial l}{\partial z^{d_0}}(z^1,...,z^m)\right)\\
 &=&\displaystyle\left(\mu_{0,1},...,\mu_{0,d_0}\right),\\
\mu_1&=&\frac{\partial l}{\partial v_1}(v)=\left(\frac{\partial l}{\partial z^{d_0+1}},...,
\frac{\partial l}{\partial z^{d_0+d_1}}\right)(v)=\left(\frac{\partial l}{\partial z^{d_0+1}}(z^1,...,z^m),...,
\frac{\partial l}{\partial z^{d_0+d_1}}(z^1,...,z^m)\right)\\
& =&\displaystyle\left(\mu_{1,d_0+1},...,\mu_{1,d_0+d_1}\right),\\
&\vdots&\\
\mu_n&=&\frac{\partial l}{\partial v_n}(v)=\left(\frac{\partial l}{\partial z^{m-d_n}},...,
\frac{\partial l}{\partial z^m}\right)(v)=\left(\frac{\partial l}{\partial z^{m-d_n}}(z^1,...,z^m),...,
\frac{\partial l}{\partial z^m}(z^1,...,z^m)\right)\\
& =&\displaystyle\left(\mu_{n,m-d_n},...,\mu_{n,m}\right).\\
\end{eqnarray*}

\vspace{.5cm} Then, the differential-algebraic system (\ref{sistdemus}) can be written as follows,

\begin{eqnarray}\label{E-dA-P-etapas}
\left\{ \begin{array}{ll}
& \displaystyle\sum_{i=0}^{n}\dot{\mu}_i(\varepsilon^i_1,...,\varepsilon^i_{s_i},h^1_{s_i+1}(\varepsilon_r),...,h^i_{d_i-s_i}(\varepsilon_r))=\\
& \displaystyle \sum_{i=0}^n\left\langle\mu_i,[\eta^{(i,i+1)},
(\varepsilon^i_1,...,\varepsilon^i_{s_i},h^1_{s_i+1}(\varepsilon_r),...,h^i_{d_i-s_i}(\varepsilon_r))]+\right. \\
& \left.\displaystyle \sum_{j=0}^{i-1}\left(-a_{(N_j,N_{j+1})}^{(i,i+1)}(\eta^{(j,j+1)},(\varepsilon^j_1,...,
\varepsilon^j_{s_j},h^1_{s_j+1}(\varepsilon_r),...,h^j_{d_j-s_j}(\varepsilon_r)))+ \right.\right.\\
&  \displaystyle\left.\left.
 b_{(N_j,N_{j+1})}^{(i,i+1)}\left(\eta^{(j,j+1)},\sum_{k=j+1}^{n}
(\varepsilon^k_1,...,\varepsilon^k_{s_k},h^1_{s_k+1}(\varepsilon_r),...,h^k_{d_k-s_k}(\varepsilon_r))\right)- \right.\right.\\
& \left.\left.\displaystyle b_{(N_j,N_{j+1})}^{(i,i+1)}\left((\varepsilon^j_1,...,
\varepsilon^j_{s_j},h^1_{s_j+1}(\varepsilon_r),...,h^j_{d_j-s_j}(\varepsilon_r)),\sum_{l=j+1}^{n}
\eta^{(l,l+1)}\right) \right)\right\rangle,\\
&  \displaystyle\mu_i=\displaystyle\frac{\partial l}{\partial v_i}(v).
\end{array}
\right.
\end{eqnarray}

\vspace{.5cm} Here $\varepsilon_r=(\varepsilon^0,...,\varepsilon^n)=(\varepsilon^0_1,...,\varepsilon^0_{s_0},\varepsilon^1_1,...,
\varepsilon^1_{s_1},...,\varepsilon^n_1,...,\varepsilon^n_{s_n})$ are arbitrary, and

$\displaystyle v(t)=\bigoplus_{i=0}^{n}\eta^{(i,i+1)}(t)\in \mathcal{S}$ as in (\ref{descompv}).

\vspace{.3cm} In order to obtain a system of implicit scalar differential equations in terms of
 $\dot{\mu}_{ij},$  $\mu_{ij}$ and $\varepsilon^i_j,$
which will be linear in  $\dot{\mu}_{ij},$
 one may proceed as follows.

\vspace{.2cm}
Taking advantage of the fact that the variables
$\varepsilon^i_{r_i},$  for $ 1\leq r_i\leq s_i,\
0\leq i\leq n,$ are independent, we can choose their values arbitrarily. Then to obtain differential (implicit) equations of motion we only need to fix the value of each one of them to be arbitrary while the rest are fixed to be $0$ in system (\ref{E-dA-P-etapas}). Moreover, since the dependence of the $\varepsilon^i_{r_i}$ is linear we may simply fix the value of each one of them to be $1$ while the rest are fixed to be $0$.

For example, in the system (\ref{E-dA-P-etapas}) we replace
$\varepsilon^0_1=1,\varepsilon^0_2=0...,\varepsilon^0_{s_0}=0,\varepsilon^1_1=0,...,\varepsilon^1_{s_1}=0,\varepsilon^2_1=0,\
...,$ $\varepsilon^2_{s_2}=0,...,
\varepsilon^n_1=0,...,\varepsilon^n_{s_n}=0.$ This gives part of the equations.
Then we replace $\varepsilon^0_1=0,\varepsilon^0_2=1,\  ...,\ \varepsilon^0_{s_0}=0,\ \varepsilon^1_1=0,\ ...,\ \varepsilon^1_{s_1}=0,\
\varepsilon^2_1=0,\ ..., \ \varepsilon^2_{s_2}=0,\ ...,\
\varepsilon^n_1=0,\ ...,\ \varepsilon^n_{s_n}=0,$ which gives some more equations.
We continue in this way until the
last replacement $\varepsilon^0_1=0,\varepsilon^0_2=0,\  ...,\ \varepsilon^0_{s_0}=0,\
\varepsilon^1_1=0,\ ...,\ \varepsilon^1_{s_1}=0,\
\varepsilon^2_1=0,\ ...,\ \varepsilon^2_{s_2}=0,\ ..., \
\varepsilon^n_1=0,\ ...,\ \varepsilon^n_{s_n}=1.$\\

To the system of implicit differential equations so obtained we should add the algebraic restrictions
$\partial l / \partial v_i = \mu_i.$ This gives the Euler-d'Alembert-Poincare equations in coordinates.\\

\textbf{Remark.}
The particular case
$\displaystyle\mathcal{S}\equiv\mathcal{S}_{(0,1)}\oplus
\mathcal{S}_{(1,2)}\oplus...\oplus\mathcal{S}_{(n-1,n)}\oplus\mathcal{S}_{(n,n+1)}$
corresponds to the case in which all the variables are independent, that is, there are no variables $h^i_j.$

\vspace{.2cm} \section{Local Lagrange-Poincar{\'e} equations by several stages.}\label{sect:Lag-Poin}

\vspace{.5cm}
Let us first calculate the local Lagrange-Poincar{\'e} equations.
 Let $G$ be a Lie group and $Q=X\times G$ a trivial principal bundle with structure group $G$
acting on the left by  $g(x,h):=(x,gh),\ \forall\ g,h\in G,\ \forall x\in X,$
where the projection $\pi_G:X\times G\rightarrow X$ is given by $\pi_G (x,g)=x,$
for all $(x,g)\in Q.$
Let a principal connection
$\mathcal{A}:TQ\rightarrow\mathfrak{g},$ then
\begin{eqnarray*}
\mathcal{A}(x,g,\dot{x},\dot{g})&=&\mathcal{A}(x,g,\dot{x},0)+\mathcal{A}(x,g,0,\dot{g})\\
& =& g\mathcal{A}(x,e,\dot{x},0)g^{-1}+
g\mathcal{A}(x,e,0,g^{-1}\dot{g})g^{-1}=g\mathcal{A}(x)\dot{x}g^{-1}+\dot{g}g^{-1},
\end{eqnarray*}
where $\mathcal{A}(x)\dot{x}=\mathcal{A}(x,e,\dot{x},0)$ by
definition.
Then we can deduce that $(x,e,\dot{x},\xi)$ belongs to $T_{(x,e)}(X\times
G)$ is a horizontal vector if and only if
$\mathcal{A}(x)\dot{x}+\xi=0.$

\vspace{.3cm} On the other hand, the Cartan's structure equation says that the curvaure $B$ is given by
$B(X,Y)=d\mathcal{A}(X,Y)-[\mathcal{A}(X),\mathcal{A}(Y)],$ $
\forall \ X,Y\in\mathfrak{X}(Q).$

\vspace{.3cm} If $X$ and $Y$ are horizontal fields, then
$B(X,Y)=-\mathcal{A}([X,Y]).$

\vspace{.3cm} According to Appendix \ref{The Geometry of the Reduced Bundles} we have an isomorphism
$\alpha_{\mathcal{A}}:TQ/G\rightarrow
T(Q/G)\oplus\widetilde{\mathfrak{g}}$ given by
$\alpha_{\mathcal{A}}([q,\dot{q}]_G)=(x,\dot{x})\oplus[q,\mathcal{A}(q,\dot{q})]_G,$
where $(x,\dot{x})=T\pi(q,\dot{q}),$
$\widetilde{\mathfrak{g}}=(Q\times\mathfrak{g})/G$ with the action of
$G$ over $Q \times \mathfrak{g}$ given by
$g(q,\xi)=(gq,\text{Ad}_g\xi).$

\vspace{.3cm} In the local case $Q=X\times G,$ we have
$\widetilde{\mathfrak{g}}\equiv X\times\mathfrak{g}$ through the
identification
\newline $[(x,g), \xi]_G\equiv(x,\text{Ad}_{g^{-1}}\xi),$ in particular we have
$[(x,e),\xi]_G\equiv(x,\xi).$
Then the isomorphism $\alpha_{\mathcal{A}}$
is given by

$$\alpha_{\mathcal{A}}([x,g,\dot{x},\dot{g}]_G)=(x,\dot{x})\oplus(\mathcal{A}(x)\dot{x}+g^{-1}\dot{g}).$$

\vspace{.3cm} Now let us calculate the $\tilde{\mathfrak{g}}$-valued two form $\tilde{B}.$ By definition,

$$\widetilde{B}(x)(\dot{x},\delta x)=[q,B(q)(\dot{x}_q^h,\delta
x_q^h)]_G$$

being $\dot{x}_q^h,\delta x_q^h$ the horizontal lifts of
$\dot{x}$ and $\delta x$ at the point $q\in\pi^{-1}(x).$ Of course,
$\dot{x}_q^h,\delta x_q^h$ can be replaced by any pair of elements
$\dot{q},\delta q$ that satisfies the conditions
$T\pi\dot{q}=\dot{x}$ and $T\pi\delta q=\delta x.$

\vspace{.3cm} Since $Q=X\times G,$ we have
\begin{eqnarray*}
   \widetilde{B}(x)(\dot{x},\delta x)&=& [(x,e),B(x,e)((\dot{x},0),(\delta x,0))]_G=(x,B(x,e)((\dot{x},0),(\delta x,0))).
\end{eqnarray*}

Using the Cartan's structure equation, we obtain

$\displaystyle B(x,e)((\dot{x},0),(\delta x,0))=d\mathcal{A}(x)(\dot{x},\delta
x)-[\mathcal{A}(x)\dot{x},\mathcal{A}(x)\delta x],$ then
$$\widetilde{B}(x)(\dot{x},\delta
x)=(x,d\mathcal{A}(x)(\dot{x},\delta
x)-[\mathcal{A}(x)\dot{x},\mathcal{A}(x)\delta x]).$$

 Now we need to calculate the covariant derivatives on the vector bundles
$\widetilde{\mathfrak{g}}$ and $\widetilde{\mathfrak{g}}^*:$

\vspace{.3cm} We have the formula
$\displaystyle\frac{D}{Dt}[q,\xi]_G=[q,-[\mathcal{A}(q,\dot{q}),\xi]+\dot{\xi}]_G$
(see Appendix \ref{appendixa}).

\vspace{.2cm} In the local case, after the identification
$[(x,e),\xi]_G\equiv(x,\xi),$ we have
\begin{eqnarray}\label{dercovtrivial}
 \frac{D}{Dt}(x,\xi)= (x,-[\mathcal{A}(x)\dot{x},\xi]+\dot{\xi})=(x,\dot{\xi}-\text{ad}_{\mathcal{A}(x)\dot{x}}\ \xi).
\end{eqnarray}

\vspace{.3cm} Let $\overline{\alpha}(t)=(x(t),\alpha(t))\in
\widetilde{\mathfrak{g}}^*$ with the identification
$\widetilde{\mathfrak{g}}^*\equiv X\times \mathfrak{g}^*$ given by

$[(x,e),\alpha]_G\equiv(x,\alpha).$

\vspace{.3cm} For any curve
$\overline{\xi}(t)=(x(t),\xi(t))\in\widetilde{\mathfrak{g}},$ we have
\begin{equation*}
   \frac{d}{dt}\langle\overline{\alpha}(t),\overline{\xi}(t)\rangle
   = \dot{\alpha}(t)\xi(t)+\alpha(t)\dot{\xi}(t) = \left\langle\frac{D\overline{\alpha}}{Dt},\overline{\xi}(t)\right\rangle+\left\langle\overline{\alpha}(t),
\frac{D\overline{\xi}(t)}{Dt}\right\rangle.
\end{equation*}

But using the identity (\ref{dercovtrivial}), we obtain that
$$\left\langle\overline{\alpha}(t),\frac{D\overline{\xi}(t)}{Dt}\right\rangle=\left\langle\alpha(t),-[\mathcal{A}(x)\dot{x},
\xi(t)]+\dot{\xi}(t)\right\rangle=\left\langle\alpha(t),\dot{\xi}(t)-\text{ad}_{\mathcal{A}(x)\dot{x}}\
\xi(t)\right\rangle.$$

Then
\begin{eqnarray}
   \frac{D(x,\alpha)}{Dt}=(x,\dot{\alpha}+\text{ad}^*_{\mathcal{A}(x)\dot{x}}\ \alpha).
\end{eqnarray}

\paragraph{The local vertical Lagrange-Poincar{\'e} equation.}

As in Appendix \ref{reducedLPE} for a reduced Lagrangian $l:T(Q/G)\oplus\widetilde{\mathfrak{g}}\rightarrow
\mathbb{R},$ we have the vertical Lagrange-Poincar\'e equations
\begin{eqnarray}
   \frac{D}{Dt}\frac{\partial l}{\partial \overline{v}}=\text{ad}^*_{\overline{v}}\ \frac{\partial l}{\partial \overline{v}}.
\end{eqnarray}

\vspace{.2cm} In the local case, we denote
$l(x,\dot{x},[e,\xi]_G)\equiv l(x,\dot{x},(x,\xi))\equiv
l(x,\dot{x},\xi)$ and we can identify $\partial
l/\partial\overline{v}\equiv\partial l/\partial \xi.$ Then,
using the formula (\ref{dercovtrivial}), we obtain
$$\frac{D}{Dt}\frac{\partial l}{\partial\overline{v}}\equiv\frac{D}{Dt}
\frac{\partial l}{\partial \xi}=\frac{d}{dt}\frac{\partial l}{\partial\xi} +\text{ad}^*_{\mathcal{A}(x)\dot{x}}\frac{\partial l}{\partial \xi}.$$

\vspace{.2cm} Then the vertical Lagrange-Poincar{\'e} equation becomes
\begin{eqnarray}
   \frac{d}{dt}\frac{\partial l}{\partial\xi}=\text{ad}^*_{\xi}
   \frac{\partial l}{\partial\xi}-\text{ad}^*_{\mathcal{A}(x)\dot{x}}\frac{\partial l}{\partial\xi}.
\end{eqnarray}

\vspace{.3cm}
\paragraph{The local horizontal Lagrange-Poincar{\'e} equations.}

In the general case we have
\begin{eqnarray}
   \frac{\partial^{\mathcal{A}}l}{\partial x}\ \delta x-\frac{D}{Dt}\frac{\partial l}{\partial\dot{x}}\ \delta{x}=\frac{\partial l}{\partial \overline{v}}\widetilde{B}(x)(\dot{x},\delta x).
\end{eqnarray} Now we need to calculate this in the local case. For this we proceed as in Appendix \ref{reducedLPE}.

\vspace{.2cm} Now let $\overline{v}(t,s)$ be a horizontal
deformation of the curve $\overline{v}(t)$ such that
$\overline{v}(t,0)=\overline{v}(t),$ and let $x(t,s)$ be the base
point of $\overline{v}(t,s),$ then $x(t,0)=x(t).$

In the local case, we have $\overline{v}(t,s)=(x(t,s),\xi(t,s)).$
Let $\delta x=\displaystyle\left.\frac{\partial x}{\partial s}\right|_{s=0}.$

By definition, $$\displaystyle
\frac{\partial^{\mathcal{A}}l}{\partial x}\ \delta
x=\left.\frac{d}{ds}\right|_{s=0}\frac{dl}{dx}(x(t,s),\dot{x}(t,s),(x,\xi)(t,s)).$$

\vspace{.3cm} \underline{Note}: $\dot{x}(t)$ has been constant.

\vspace{.3cm} Since $(x(t,s),\xi(t,s))$ is horizontal for each $t,$
 using the formula (\ref{dercovtrivial}) we have
$$\left.\frac{\partial\xi}{\partial s}\right|_{s=0}=[\mathcal{A}(x)\delta x,\xi]\equiv\text{Ad}_{\mathcal{A}(x)\delta x}\xi$$

then, $$\frac{\partial^{\mathcal{A}}l}{\partial x}\ \delta
x=\frac{\partial l}{\partial x}\ \delta x+\frac{\partial l}{\partial\xi}[\mathcal{A}(x)\delta x,\xi]\equiv
\frac{\partial l}{\partial x}\ \delta x+\frac{\partial l}{\partial\xi}\text{ad}_{\mathcal{A}(x)\delta x}\xi.$$

\vspace{.3cm} Finally the horizontal Lagrange-Poincar{\'e} equations
are

\begin{eqnarray}
   \frac{\partial l}{\partial x}\ \delta x-\frac{d}{dt}\frac{\partial l}{\partial\dot{x}}\ \delta x=
   \frac{\partial l}{\partial \xi}\widetilde{B}(x)(\dot{x},\delta x)-\frac{\partial l}{\partial\xi}[\mathcal{A}(x)\delta x,\xi].
\end{eqnarray}

\vspace{.5cm} So, the Lagrange-Poincar{\'e} system of vertical and
horizontal equations is

\begin{eqnarray}\label{L-Ptrivial}
  \left \{ \begin{array}{l}
              \displaystyle\frac{d}{dt}\frac{\partial l}{\partial\xi} =\text{ad}^*_{\xi}
              \frac{\partial l}{\partial\xi}-\text{ad}^*_{\mathcal{A}(x)\dot{x}}\frac{\partial l}{\partial \xi} \\
              \\
              \displaystyle\frac{\partial l}{\partial x}\ \delta x-\frac{d}{dt}\frac{\partial l}{\partial\dot{x}}\ \delta x  =\frac{\partial l}{\partial\xi}\widetilde{B}(x)(\dot{x},\delta x)-
              \frac{\partial l}{ \partial \xi}[\mathcal{A}(x)\delta x,\xi]
                 \end{array} \right.
\end{eqnarray}

\paragraph{Local Lagrange-Poincar{\'e} equations by several stages.}

\vspace{.5cm} Consider $Q=X\times G$ and the identification
$\widetilde{\mathfrak{g}}\equiv X\times \mathfrak{g}$ given by
$[(x,e),\xi]_G\equiv(x,\xi)$ and
$[(x,g),\xi]_G\equiv(x,e,\text{Ad}_{g^{-1}}\xi).$

We have $\mathfrak{g}\equiv\mathfrak{n}_{(0,1)}\oplus\mathfrak{n}_{(1,2)}\oplus
...\oplus\mathfrak{n}_{(n,n+1)}$ and
$\displaystyle\mathfrak{g}^*=\bigoplus_{i=0}^n\mathfrak{n}_{(i,i+1)}^*,$
where we are using the same notation as in previous sections.

Let $\xi\in\mathfrak{g},$ namely
\begin{eqnarray}\label{descomppsi}
\displaystyle\xi=\bigoplus_{i=0}^n\eta^{(i,i+1)},
\end{eqnarray}
then $\partial l/\partial\overline{v}\equiv\partial l/\partial\xi\in\mathfrak{g}^*$
and
\begin{eqnarray}\label{descompderivresppsi}
\displaystyle\frac{\partial l}{\partial\xi}=\bigoplus_{j=0}^n
\frac{\partial l}{\partial\eta^{(j,j+1)}}=\bigoplus_{j=0}^n\beta_{(j,j+1)},
\end{eqnarray}
where $\beta_{(j,j+1)}\in\mathfrak{n}^*_{(j,j+1)},$ for $0\leq j\leq n.$

\vspace{.3cm} On the other hand,
$\mathcal{A}(x)\dot{x}\in\mathfrak{g}$ and, so, it can be decomposed
as $\displaystyle\mathcal{A}(x)\dot{x}=\bigoplus_{i=0}^n\mathcal{A}^{(i,i+1)}
(x,\dot{x})$ with $\mathcal{A}^{(i,i+1)}(x,\dot{x})\in\mathfrak{n}_{(i,i+1)},$ $0\leq i\leq n.$

\paragraph{Local vertical Lagrange-Poincar{\'e} equations by several stages}

For a given arbitrary element
$\displaystyle\rho=\bigoplus_{i=0}^n\nu^{(i,i+1)}\in\mathfrak{g}$
the vertical Lagrange-Poincar{\'e} equations
(\ref{L-Ptrivial}) at this point $\rho$ are:

\begin{eqnarray*}
\left\langle\bigoplus_{j=0}^n\dot{\beta}_{(j,j+1)},\bigoplus_{i=0}^n
\nu^{(i,i+1)}\right\rangle &=& \left\langle\bigoplus_{l=0}^n\beta_{(l,l+1)},\left[
\bigoplus_{j=0}^n\eta^{(j,j+1)},\bigoplus_{i=0}^n\nu^{(i,i+1)}\right]\right
\rangle\\
& & -\left\langle\bigoplus_{l=0}^n\beta_{(l,l+1)},\left[
\bigoplus_{k=0}^n\mathcal{A}^{(k,k+1)}(x,\dot{x}),\bigoplus_{j=0}^n\nu^{(j,j+1)}\right]\right\rangle.
\end{eqnarray*}

\vspace{.5cm} So, we obtain

$$\bigoplus_{j=0}^n\dot{\beta}_{(j,j+1)}(\nu^{(j,j+1)})=\left\langle\bigoplus_{l=0}^n\beta_{(l,l+1)},
\bigoplus_{i=0}^{n}\left([\eta^{(i,i+1)},
\nu^{(i,i+1)}]-[\mathcal{A}^{(i,i+1)}(x,\dot{x}),
\nu^{(i,i+1)}]+ \right.\right. \nonumber $$
$$ \left.\left. \sum_{j=0}^{i-1}\left(-a_{(N_j,N_{j+1})}^{(i,i+1)}(\eta^{(j,j+1)},\nu^{(j,j+1)})+ b_{(N_j,N_{j+1})}^{(i,i+1)}\left(\eta^{(j,j+1)},\sum_{k=j+1}^{n}
\nu^{(k,k+1)}\right)- \right.\right.\right.$$
$$ \left.\left.\left. b_{(N_j,N_{j+1})}^{(i,i+1)}\left(\nu^{(j,j+1)},\sum_{l=j+1}^{n}
\eta^{(l,l+1)} \right)+ a_{(N_j,N_{j+1})}^{(i,i+1)}(\mathcal{A}^{(j,j+1)}(x,\dot{x}),\nu^{(j,j+1)})\right. \right.\right.
\nonumber $$
$$\left. \left.\left.-
 b_{(N_j,N_{j+1})}^{(i,i+1)}\left(\mathcal{A}^{(j,j+1)}(x,\dot{x}),\sum_{k=j+1}^{n}
\nu^{(k,k+1)}\right)+ b_{(N_j,N_{j+1})}^{(i,i+1)}\left(\nu^{(j,j+1)},\sum_{l=j+1}^{n}
\mathcal{A}^{(l,l+1)}(x,\dot{x})\right)\right)\right)\right\rangle.$$

\vspace{.5cm} Then we have

$$\sum_{j=0}^n\dot{\beta}_{(j,j+1)}(\nu^{(j,j+1)})=\sum_{i=0}^n\left\langle\beta_{(i,i+1)},
[\eta^{(i,i+1)},
\nu^{(i,i+1)}]-[\mathcal{A}^{(i,i+1)}(x,\dot{x}),
\nu^{(i,i+1)}]+ \right. \nonumber $$
$$ \left. \sum_{j=0}^{i-1}\left(-a_{(N_j,N_{j+1})}^{(i,i+1)}(\eta^{(j,j+1)},\nu^{(j,j+1)})+ b_{(N_j,N_{j+1})}^{(i,i+1)}\left(\eta^{(j,j+1)},\sum_{k=j+1}^{n}
\nu^{(k,k+1)}\right)- \right.\right.$$
$$ \left.\left. b_{(N_j,N_{j+1})}^{(i,i+1)}\left(\nu^{(j,j+1)},\sum_{l=j+1}^{n}
\eta^{(l,l+1)} \right)+ a_{(N_j,N_{j+1})}^{(i,i+1)}(\mathcal{A}^{(j,j+1)}(x,\dot{x}),\nu^{(j,j+1)})\right. \right.
\nonumber $$
$$\left.\left.-
 b_{(N_j,N_{j+1})}^{(i,i+1)}\left(\mathcal{A}^{(j,j+1)}(x,\dot{x}),\sum_{k=j+1}^{n}
\nu^{(k,k+1)}\right)+ b_{(N_j,N_{j+1})}^{(i,i+1)}\left(\nu^{(j,j+1)},\sum_{l=j+1}^{n}
\mathcal{A}^{(l,l+1)}(x,\dot{x})\right)\right)\right\rangle.$$

\vspace{1cm} Since $\nu^{(i,i+1)}\in\mathfrak{n}_{(i,i+1)}$ for $0\leq i\leq n,$ are
arbitrary, we have the vertical Lagrange-Poincar{\'e} equations by
stages:

\begin{equation*}
\displaystyle\dot{\beta}_{(i,i+1)}\mid_{\mathfrak{n}_{(i,i+1)}}=\left.\left\langle\beta_{(i,i+1)},
[\eta^{(i,i+1)}-\mathcal{A}^{(i,i+1)}(x,\dot{x}),\ .\ ]+ \sum_{j=0}^{i-1}
b^{(i,i+1)}_{(N_j,N_{j+1})}(\eta^{(j,j+1)}-\mathcal{A}^{(j,j+1)}(x,\dot{x}),\ .\ )\right\rangle\right|_{\mathfrak{n}_{(i,i+1)}},
\end{equation*}

\vspace{.3cm} being
$\displaystyle\eta^{(i,i+1)}\in \mathfrak{n}_{(i,i+1)},$ $\ \displaystyle\beta_{(i,i+1)}=\frac{\partial
l}{\partial\eta^{(i,i+1)}}\in \mathfrak{n}^*_{(i,i+1)},$ for $0\leq i\leq n ;$ \quad and

$\displaystyle\mathcal{A}(x)\dot{x}=\bigoplus_{k=0}^n\mathcal{A}^{(k,k+1)}(x,\dot{x}),$ with $\displaystyle\mathcal{A}^{(k,k+1)}\in \mathfrak{n}_{(k,k+1)},$ for $0\leq k \leq n.$

\vspace{.5cm} This equations coincide with the Euler-Poincar{\'e}
equations (\ref{E-Pstages2}) if $Q = G$.

\vspace{.3cm} In the case in which the number of stages is two, these vertical equations by stages are:

\begin{eqnarray*}
   \displaystyle\left.\dot{\beta}_{(0,1)}\right|_{\mathfrak{n}_{(0,1)}} &=&\left.\displaystyle\left\langle\beta_{(0,1)},[\eta^{(0,1)}-\mathcal{A}^{(0,1)}(x,\dot{x}),\ .\ ]\right\rangle\right|_{\mathfrak{n}_{(0,1)}}=\left.\left(\text{ad}^*_{\eta^{(0,1)}-\mathcal{A}^{(0,1)}(x.\dot{x})}
   \beta_{(0,1)}\right)\right|_{\mathfrak{n}_{(0,1)}}\\
   & & \\
   \displaystyle\left.\dot{\beta}_{(1,2)}\right|_{\mathfrak{n}_{(1,2)}} &=&\displaystyle\left.\left\langle\beta_{(1,2)},[\eta^{(1,2)}-\mathcal{A}^{(1,2)}(x,\dot{x}),\ .\ ]+b_{(N_0,N_1)}^{(1,2)}(\eta^{(0,1)}-\mathcal{A}^{(0,1)}(x,\dot{x}),\ .\ )\right\rangle\right|_{\mathfrak{n}_{(1,2)}}\\
   &= & \displaystyle\left.\left(\text{ad}^*_{\eta^{(1,2)}-\mathcal{A}^{(1,2)}(x,\dot{x})}
   \beta_{(1,2)}\right)\right|_{\mathfrak{n}_{(1,2)}}+\left.\beta_{(1,2)}(b^{(1,2)}_{(N_0,N_1)}(\eta^{(0,1)}-
   \mathcal{A}^{(0,1)}(x,\dot{x}),\ .\ ))\right|_{\mathfrak{n}_{(1,2)}}\\
   & & \\
   \displaystyle\left.\dot{\beta}_{(2,3)}\right|_{\mathfrak{n}_{(2,3)}} &=&\displaystyle\left.\left\langle\beta_{(2,3)},[\eta^{(2,3)}-\mathcal{A}^{(2,3)}(x,\dot{x}),\ .\ ]+b_{(N_0,N_1)}^{(2,3)}(\eta^{(0,1)}-\mathcal{A}^{(0,1)}(x,\dot{x}),\ .\ )+\right.\right.\\
   & & \displaystyle \left.\left.
   b_{(N_1,N_2)}^{(2,3)}(\eta^{(1,2)}-\mathcal{A}^{(1,2)}(x,\dot{x}),\ .\ )
   \right\rangle\right|_{\mathfrak{n}_{(2,3)}}=\\
   & & \displaystyle
   \hspace{-2cm}\left.\left(\text{ad}^*_{\eta^{(2,3)}-\mathcal{A}^{(2,3)}(x,\dot{x})}
   \beta_{(2,3)}\right)\right|_{\mathfrak{n}_{(2,3)}}+\left.\beta_{(2,3)}(b^{(2,3)}_{(N_0,N_1)}(\eta^{(0,1)}-
   \mathcal{A}^{(0,1)}(x,\dot{x})+\eta^{(1,2)}-\mathcal{A}^{(1,2)}(x,\dot{x}),\ .\ ))\right|_{\mathfrak{n}_{(2,3)}}\\
\end{eqnarray*}

\paragraph{The local horizontal Lagrange-Poincar{\'e} equations by several stages.}

\vspace{.5cm} Recall  that the horizontal Lagrange-Poincar{\'e} equations are

\begin{eqnarray}\label{horizE-P}
\frac{\partial l}{\partial x}\delta x-\frac{d}{dt}\frac{\partial
l}{\partial\dot{x}}\delta x=\frac{\partial
l}{\partial\xi}\widetilde{B}(x)(\dot{x},\delta x)-\frac{\partial
l}{\partial\xi}[\mathcal{A}(x)\delta x,\xi].
\end{eqnarray}

\vspace{.2cm} Let $\xi\in\mathfrak{g}$ such that
$\displaystyle\xi=\bigoplus_{0=1}^n\eta^{(i,i+1)}$
and $\displaystyle\frac{\partial
l}{\partial\xi}=\bigoplus_{j=0}^n\beta_{(j,j+1)},$
as in (\ref{descomppsi}) and (\ref{descompderivresppsi}). Let
$\mathcal{A}(x)\delta x\in\mathfrak{g}$ such that
$\displaystyle\mathcal{A}(x)\delta x=\bigoplus_{j=0}^n\mathcal{A}^{(j,j+1)}(x,\delta
x).$ Also consider $\displaystyle B(x,e)((\dot{x},0),(\delta
x,0))=\bigoplus_{k=0}^n\varphi^{(k,k+1)}$, where $\varphi^{(k,k+1)} \in \mathfrak{n}_{(k,k+1)}$, $k = 0,...,n$.

\vspace{.5cm} Then, since $\widetilde{B}(x)(\dot{x},\delta
x)=(x,B(x,e)((\dot{x},0),(\delta x,0)))\equiv
B(x,e)((\dot{x},0),(\delta x,0)),$ we can write the right hand side of
the equation (\ref{horizE-P}) as

$$\left\langle\bigoplus_{l=0}^n\beta_{(l,l+1)},\bigoplus_{k=0}^n
\varphi^{(k,k+1)}-\left[\bigoplus_{i=0}^n\mathcal{A}^{(i,i+1)}(x,\delta
x),\bigoplus_{j=0}^n\eta^{(j,j+1)}\right]\right\rangle=\left\langle\bigoplus_{l=0}^n\beta_{(l,l+1)},
\left(\bigoplus_{s=0}^n
\varphi^{(s,s+1)}\right)-\right.$$
$$\left.
\bigoplus_{i=0}^{n}\left([\mathcal{A}^{(i,i+1)}(x,\delta x),
\eta^{(i,i+1)}]+\sum_{j=0}^{i-1}\left(-a_{(N_j,N_{j+1})}^{(i,i+1)}(\mathcal{A}^{(j,j+1)}(x,\delta x),\eta^{(j,j+1)})
\right.\right.\right.$$
$$\left.\left.\left.+
 b_{(N_j,N_{j+1})}^{(i,i+1)}\left(\mathcal{A}^{(j,j+1)}(x,\delta x),\sum_{k=j+1}^{n}
\eta^{(k,k+1)}\right)
-b_{(N_j,N_{j+1})}^{(i,i+1)}\left(\eta^{(j,j+1)},\sum_{l=j+1}^{n}
\mathcal{A}^{(l,l+1)}(x,\delta x)\right) \right)\right)\right\rangle.$$

\vspace{.5cm} Then we have the horizontal Lagrange-Poincar{\'e}
equations by stages:

$$\frac{\partial l}{\partial x}\ \delta x-\frac{d}{dt}\frac{\partial l}{\partial\dot{x}}\ \delta x=
\sum_{i=0}^n\left\langle\beta_{(i,i+1)},
\varphi^{(i,i+1)}-[\mathcal{A}^{(i,i+1)}(x,\delta x),\eta^{(i,i+1)}]-\right.$$
$$\left.
\sum_{j=0}^{i-1}\left(-a_{(N_j,N_{j+1})}^{(i,i+1)}(\mathcal{A}^{(j,j+1)}(x,\delta x),\eta^{(j,j+1)})+
 b_{(N_j,N_{j+1})}^{(i,i+1)}\left(\mathcal{A}^{(j,j+1)}(x,\delta x),\sum_{k=j+1}^{n}
\eta^{(k,k+1)}\right)\right.\right.$$
$$\left.\left.
-b_{(N_j,N_{j+1})}^{(i,i+1)}\left(\eta^{(j,j+1)},\sum_{l=j+1}^{n}
\mathcal{A}^{(l,l+1)}(x,\delta x)\right) \right)\right\rangle;$$

\vspace{.5cm} being $\displaystyle\beta_{(i,i+1)}=\frac{\partial
l}{\partial\eta^{(i,i+1)}}\in\mathfrak{n}^*_{(i,i+1)},$ for $0\leq i\leq n.$

\paragraph{The case of two stages.} In the particular case in which we have two stages,
the horizontal equations are:

$$\frac{\partial l}{\partial x}\ \delta x-\frac{d}{dt}\frac{\partial l}{\partial\dot{x}}\ \delta x=
\langle\beta_{(0,1)},\varphi^{(0,1)}-[\mathcal{A}^{(0,1)}(x,\delta x),\eta^{(0,1)}]\rangle+
\left\langle\beta_{(1,2)},\varphi^{(1,2)}-[\mathcal{A}^{(1,2)}(x,\delta x),\eta^{(1,2)}]\right.$$
$$\left.-\left(-a^{(1,2)}_{(N_0,N_1)}(\mathcal{A}^{(0,1)}(x,\delta x),\eta^{(0,1)})+b^{(1,2)}_{(N_0,N_1)}(\mathcal{A}^{(0,1)}(x,\delta x),\eta^{(1,2)}+\eta^{(2,3)})\right.\right.$$ $$\left.\left.-b^{(1,2)}_{(N_0,N_1)}(\eta^{(0,1)},\mathcal{A}^{(1,2)}(x,\delta
x)+\mathcal{A}^{(2,3)}(x,\delta x))\right)\right\rangle + \left\langle\beta_{(2,3)},\varphi^{(2,3)}-[\mathcal{A}^{(2,3)}(x,\delta x),\eta^{(2,3)}]\right. $$
$$\left. -\left(-a^{(2,3)}_{(N_0,N_1)}(\mathcal{A}^{(0,1)}(x,\delta x),\eta^{(0,1)})+b^{(2,3)}_{(N_0,N_1)}(\mathcal{A}^{(0,1)}(x,\delta x),\eta^{(1,2)}+\eta^{(2,3)})-b^{(2,3)}_{(N_0,N_1)}(\eta^{(0,1)},\mathcal{A}^{(2,3)}(x,\delta x))\right)-\right.$$
$$\left.\left(-a^{(2,3)}_{(N_1,N_2)}(\mathcal{A}^{(1,2)}(x,\delta x),\eta^{(1,2)})+b^{(2,3)}_{(N_1,N_2)}(\mathcal{A}^{(1,2)}(x,\delta x),\eta^{(2,3)})-b^{(2,3)}_{(N_1,N_2)}(\eta^{(1,2)},\mathcal{A}^{(2,3)}(x,\delta x))\right)\right\rangle.$$

\vspace{.5cm} \section{Local Lagrange-d'Alembert-Poincar{\'e}
equations by several stages} \label{sect:Lag-d'Alem-Poin}

\vspace{.5cm} Now we consider the following situation:

\vspace{.2cm} Let $Q=X\times G$ be a principal trivial bundle with
structure group $G.$ Let $G$ in the condition of the theorem
\ref{formcorchete} in the page \pageref{formcorchete}. As before,
we have the identification of the adjoint bundle
$\widetilde{\mathfrak{g}}\equiv X\times\mathfrak{g}.$
We shall consider first a particular case and then the general case, as we did in section \ref{EDPBSS}.
\vspace{.1cm}

\vspace{.2cm} \paragraph{Particular case.} Suppose, as before, that
$\mathcal{S}$ can be decomposed as

\vspace{.2cm}
$\displaystyle\mathcal{S}:=\mathcal{S}_e\equiv\mathcal{S}_{(0,1)}\oplus\mathcal{S}_{(1,2)}\oplus...\oplus\mathcal{S}_{(n,n+1)}$
being $\mathcal{S}_{(i,i+1)}=\mathcal{S}\cap\mathfrak{n}_{(i,i+1)},$ for
$0\leq i\leq n.$

\vspace{.5cm} Then, the
Lagrange-d'Alembert-Poincar{\'e} equations in several stages
are:

\begin{eqnarray}
\displaystyle\dot{\beta}_{(i,i+1)}(\nu^{(i,i+1)})&=&\left\langle\beta_{(i,i+1)},
[\eta^{(i,i+1)}-\mathcal{A}^{(i,i+1)}(x,\dot{x}),\nu^{(i,i+1)}]+ \right.\nonumber \\
& & \left.\sum_{j=0}^{i-1}
b^{(i,i+1)}_{(N_j,N_{j+1})}(\eta^{(j,j+1)}-\mathcal{A}^{(j,j+1)}(x,\dot{x}),\nu^{(i,i+1)})\right\rangle, \nonumber \\
\displaystyle\frac{\partial l}{\partial x}\ \delta x-\frac{d}{dt}\frac{\partial l}{\partial\dot{x}}\ \delta x &=&\sum_{i=0}^n\left\langle\beta_{(i,i+1)},
\varphi^{(i,i+1)}-[\mathcal{A}^{(i,i+1)}(x,\delta x),\eta^{(i,i+1)}]-\right.\nonumber \\
& & \left.
\sum_{j=0}^{i-1}\left(-a_{(N_j,N_{j+1})}^{(i,i+1)}(\mathcal{A}^{(j,j+1)}(x,\delta x),\eta^{(j,j+1)})+\right.\right.\nonumber\\
& & \left.\left.
 b_{(N_j,N_{j+1})}^{(i,i+1)}\left(\mathcal{A}^{(j,j+1)}(x,\delta x),\sum_{k=j+1}^{n}
\eta^{(k,k+1)}\right)\right.\right.\nonumber \\
& & \left.\left.
-b_{(N_j,N_{j+1})}^{(i,i+1)}\left(\eta^{(j,j+1)},\sum_{l=j+1}^{n}
\mathcal{A}^{(l,l+1)}(x,\delta x)\right) \right)\right\rangle;\nonumber
\end{eqnarray}

\vspace{.5cm} for all $\delta x\in T_x(Q/G)$ and $\nu^{(i,i+1)}\in\mathcal{S}_{(i,i+1)},$ $i=0,...,n;$ being
$\displaystyle\eta^{(i,i+1)}\in\mathfrak{n}_{(i,i+1)},$

$\ \displaystyle\beta_{(i,i+1)}=\frac{\partial
l}{\partial\eta^{(i,i+1)}}\in\mathfrak{n}^*_{(i,i+1)}$ and
$\displaystyle\mathcal{A}(x)\dot{x}=\bigoplus_{k=0}^n\mathcal{A}^{(k,k+1)}(x,\dot{x})\in\mathfrak{g}.$

\vspace{.3cm}
\paragraph{General case.} \label{casogeneral} Consider again the case in which $\mathcal{S}$ cannot be writen as
$\displaystyle\mathcal{S}:=\mathcal{S}_e\equiv\mathcal{S}_{(0,1)}\oplus\mathcal{S}_{(1,2)}\oplus...\oplus\mathcal{S}_{(n,n+1)}$
 being \vspace{.2cm} $\mathcal{S}_{(i,i+1)}=\mathcal{S}\cap\mathfrak{n}_{(i,i+1)},$ for
$0\leq i\leq n.$ Then, using the same notation in page
\pageref{elemAlgLie}, if $\xi$ is a generic element of
$\mathcal{S},$ it has the following form
$\xi=(\varepsilon^0_1,...,\varepsilon^0_{s_0},h^0_{s_0+1},...,h^0_{d_0-s_0},\varepsilon^1_1,...,\varepsilon^1_{s_1},h^1_{s_1+1},...,h^1_{d_1-s_1}
,...,\varepsilon^n_1,...,\varepsilon^n_{s_n},h^n_{s_n+1},...,h^n_{d_n-s_n}).$

\vspace{.3cm} Then, if $\displaystyle\xi\in\mathcal{S},$ the
\textbf{\emph{Lagrange-d'Alembert-Poincar\'e equations by $n$ stages}}
in this case are:
$$\begin{array}{c}
\hspace{-1.5cm}\displaystyle\sum_{j=0}^n\dot{\mu}_{j}(\varepsilon^j_1,...,\varepsilon^j_{s_j},h^j_{s_j+1},...,h^j_{d_j-s_j})=
\displaystyle\sum_{i=0}^n\left\langle\mu_i,
[\eta^{(i,i+1)},
(\varepsilon^i_1,...,\varepsilon^i_{s_i},h^i_{s_i+1},...,h^i_{d_i-s_i})] \right.\\
\displaystyle\left. -[\mathcal{A}^{(i,i+1)}(x,\dot{x}),
(\varepsilon^i_1,...,\varepsilon^i_{s_i},h^i_{s_i+1},...,h^i_{d_i-s_i})]+ \sum_{j=0}^{i-1}\left(-a_{(N_j,N_{j+1})}^{(i,i+1)}
(\eta^{(j,j+1)},(\varepsilon^j_1,...,\varepsilon^j_{s_j},h^j_{s_j+1},...,h^j_{d_j-s_j}))+\right. \right. \\
\displaystyle\left.\left. b_{(N_j,N_{j+1})}^{(i,i+1)}\left(\eta^{(j,j+1)},\sum_{k=j+1}^{n}
(\varepsilon^k_1,...,\varepsilon^k_{s_k},h^k_{s_k+1},...,h^k_{d_k-s_k})\right) \right.\right.\\
\displaystyle\left.\left.  - b_{(N_j,N_{j+1})}^{(i,i+1)}\left((\varepsilon^j_1,...,\varepsilon^j_{s_j},h^j_{s_j+1},...,h^j_{d_j-s_j}),\sum_{l=j+1}^{n}
\eta^{(l,l+1)} \right)+ \right.\right.\\
\displaystyle \left.\left. a_{(N_j,N_{j+1})}^{(i,i+1)}(\mathcal{A}^{(j,j+1)}(x,\dot{x}),(\varepsilon^j_1,...,\varepsilon^j_{s_j},h^j_{s_j+1},...,h^j_{d_j-s_j}))\right. \right.
\nonumber
\end{array}$$
\begin{equation}\label{L-d'A-Pstagesgral}
\displaystyle\left.\left.-
 b_{(N_j,N_{j+1})}^{(i,i+1)}\left(\mathcal{A}^{(j,j+1)}(x,\dot{x}),\sum_{k=j+1}^{n}
(\varepsilon^k_1,...,\varepsilon^k_{s_k},h^k_{s_k+1},...,h^k_{d_k-s_k})\right)+ \right.\right.
\end{equation}
$$\begin{array}{c}
\displaystyle\left.\left. b_{(N_j,N_{j+1})}^{(i,i+1)}\left((\varepsilon^j_1,...,\varepsilon^j_{s_j},h^j_{s_j+1},...,h^j_{d_j-s_j}),\sum_{l=j+1}^{n}
\mathcal{A}^{(l,l+1)}(x,\dot{x})\right)\right)\right\rangle,\hspace{2cm}\\
\displaystyle\frac{\partial l}{\partial x}\ \delta x-\frac{d}{dt}\frac{\partial l}{\partial\dot{x}}\ \delta x=
\displaystyle\sum_{i=0}^n\left\langle\mu_i,
\varphi^{(i,i+1)}-[\mathcal{A}^{(i,i+1)}(x,\delta x),\eta^{(i,i+1)}]-\right.\hspace{4cm}\\
\displaystyle\left.
\sum_{j=0}^{i-1}\left(-a_{(N_j,N_{j+1})}^{(i,i+1)}(\mathcal{A}^{(j,j+1)}(x,\delta x),\eta^{(j,j+1)})+
 b_{(N_j,N_{j+1})}^{(i,i+1)}\left(\mathcal{A}^{(j,j+1)}(x,\delta x),\sum_{k=j+1}^{n}
\eta^{(k,k+1)}\right)\right.\right.\\
\displaystyle\left.\left.
-b_{(N_j,N_{j+1})}^{(i,i+1)}\left(\eta^{(j,j+1)},\sum_{l=j+1}^{n}
\mathcal{A}^{(l,l+1)}(x,\delta x)\right) \right)\right\rangle,\\
\displaystyle\mu_i=\frac{\partial
l}{\partial v_i}(v),\quad v_j\in\mathcal{S}_j,\hspace{11.3cm}\\
\displaystyle v=\bigoplus_{i=0}^n\eta^{(i,i+1)},\hspace{13.1cm}\\
\displaystyle\mathcal{A}(x)\dot{x}=\bigoplus_{k=0}^n\mathcal{A}^{(k,k+1)}(x,\dot{x}),\hspace{11.5cm}\\
\displaystyle\hspace{.1cm} B(x,e)((\dot{x},0),(\delta
x,0))=\bigoplus_{k=0}^n\varphi^{(k,k+1)},\hspace{10cm}
\end{array}$$

\vspace{.3cm}
for all $\delta x\in T_x(Q/G)$ and $\varepsilon^i_{r_i}\in\mathcal{S}_{(i,i+1)},$ for $i=0,...,n.$

\vspace{.5cm} We proceed as before in the general case in section \ref{EDPBSS} to obtain the equations of motion.

\section{Example of reduction by stages}\label{sect:disco}
\subsection{The Lagrange-d'Alembert-Poincar\'e equations by stages for Euler's disk}

Let us consider an Euler's disk of radius $r$ and thickness $re$
rolling on a rough horizontal plane and having only one point of
contact with it, in other words, the horizontal and vertical positions of the disk is excluded from the configuration space. We will use the description and notation of reference \cite{CDgrueso}, however here we shall apply different methods.

The configuration space for Euler's disk is
\begin{eqnarray}\label{Q}
Q=\left(0,\frac{\pi}{2}\right)\times S^1\times
S^1\times\mathbb{R}^2,
\end{eqnarray}
and a point $q\in Q$ is written as $q=(\theta,\varphi,\psi,x)$. Let denote $A$ an orthogonal $3\times 3$ matrix representing the instantaneous orientation of the disk, therefore $A \textbf{e}_3$  has $\textbf{e}_3$ component belonging to $(0,1)$.
The angle $\theta$ is the angle from the axis
$\textbf{e}_3$ to the vector $z=A\textbf{e}_3$. The vector $y$ is the unit vector directed from
the point of contact with the plane to the origin of the system
$(A\textbf{e}_1,A\textbf{e}_2,A\textbf{e}_3).$ The vector $u$ is
defined by $u=z\times y$ being tangent to the disk at the point of
contact $x$ and having the direction of the motion of this point on
the plane. The unit vector $u$ has the expression
$u=(-\cos\varphi,-\sin\varphi,0)$ which defines the angle $\varphi.$
The angle $\psi$ is the angle from the vector $-y$ to the vector
$A\textbf{e}_1$ where the positive sense for measuring the angle
$\psi$ on the plane of the disk is the counterclockwise sense, as
viewed from $z.$

\vspace{.2cm}The nonholonomic constraint is given by the
distribution
\begin{eqnarray}\label{distribution}
\mathcal{D}_{(\theta,\varphi,\psi,x)}=\left\{(\theta,\varphi,\psi,x,\dot{\theta},\dot{\varphi},\dot{\psi},
\dot{x})\mid \dot{x}=\dot{\psi}ru\right\}.
\end{eqnarray}

\vspace{.2cm} The symmetry group that we are going to consider is
$G=SO(2)\times\mathbb{R}^2\equiv
S^1\times\mathbb{R}^2.$ Since the group is abelian, we can consider
that the group $G$ acts on the left
on the configuration space $Q$ by the action
$$\displaystyle (\alpha,
a)(\theta,\varphi,\psi,x)=(\theta,\varphi,\psi+\alpha,x+a).$$ With
this action $Q$ is a left principal bundle with structure group
$G=S^1\times\mathbb{R}.$ The map $\pi:Q\rightarrow
Q/G\equiv(0,\pi/2)\times S^1,$ given by
$\pi(\theta,\varphi,\psi,x)=(\theta,\varphi),$ is a submersion and we have
$[(\theta,\varphi,\psi,x)]_{S^1\times\mathbb{R}^2}\equiv(\theta,\varphi).$

\vspace{.2cm} The vertical distribution $\mathcal{V}$ is given by
\begin{eqnarray}\label{vertdistrib}
\mathcal{V}_{(\theta,\varphi,\psi,x)}=\{(\theta,\varphi,\psi,x,\dot{\theta},\dot{\varphi},\dot{\psi},\dot{x})\mid
\dot{\theta}=0,\dot{\varphi}=0\},
\end{eqnarray}

and the vector bundle $\mathcal{S}=\mathcal{D}\cap\mathcal{V}$ is
\begin{eqnarray}\label{ese}
\mathcal{S}_{(\theta,\varphi,\psi,x)}=\{(\theta,\varphi,\psi,x,0,0,\xi,\xi
ru)\}.
\end{eqnarray}

Since $\dim\mathcal{D}_{(\theta,\varphi,\psi,x)}=3,\
\dim\mathcal{V}_{(\theta,\varphi,\psi,x)}=3$ and
$\dim\mathcal{S}_{(\theta,\varphi,\psi,x)}=1,$ we have \newline
$\displaystyle\mathcal{D}_{(\theta,\varphi,\psi,x)}+\mathcal{V}_{(\theta,\varphi,\psi,x)}=T_{(\theta,\varphi,\psi,x)}Q,$
\hspace{.1cm} that is, the dimension assumption is satisfied.

\vspace{.2cm} We choose the horizontal spaces
$$\displaystyle\mathcal{H}_{(\theta,\varphi,\psi,x)}=\{(\theta,\varphi,\psi,x,\alpha,\beta,0,0)\}$$
satisfying
$\displaystyle\mathcal{H}_{(\theta,\varphi,\psi,x)}\oplus\mathcal{S}_{(\theta,\varphi,\psi,x)}=
\mathcal{D}_{(\theta,\varphi,\psi,x)}.$ So the distribution
$\mathcal{H}$ is $G$-invariant.

\vspace{.3cm} The connection 1-form $\mathcal{A}$ whose horizontal
spaces are $\mathcal{H}_{(\theta,\varphi,\psi,x)}$ is
\begin{eqnarray}\label{conexionA}
\mathcal{A}(\theta,\varphi,\psi,x,\dot{\theta},\dot{\varphi},\dot{\psi},\dot{x})=(\dot{\psi},
\dot{x}).
\end{eqnarray}

\vspace{.5cm} In this context, we are going to apply the techniques
of reduction by stages to the example of Euler's disk. We shall
write the Lagrange-d'Alembert-Poincar\'e equations by stages for this
example.

We choose $N_1=\mathbb{R}^2$ as a normal subgroup of the symmetry
group $G=S^1\times\mathbb{R}^2,$ and $N_2=\{e\}$ as a normal
subgroup of $N_1.$

So, following the notation of the theorem \ref{formcorchete} in page
\pageref{formcorchete}, we have $N_{(0,1)}=N_0/N_1=G/N_1=S^1$ and
$N_{(1,2)}=N_1/N_2=\mathbb{R}^2.$ Then we can write the chain of
normal subgroups as follows
\begin{eqnarray}\label{chainG}
N_2=\{e\}\lhd N_1=\mathbb{R}^2\lhd N_0=G=S^1\times\mathbb{R}^2.
\end{eqnarray}

\vspace{.2cm} We note $\mathfrak{g},\ \mathfrak{n}_1$ and
$\mathfrak{n}_2$ the Lie algebras of the groups $G,\ N_1$ and $N_2$
respectively. And $\mathfrak{n}_{(0,1)},\ \mathfrak{n}_{(1,2)}$ denote the Lie
algebras of the groups $N_{(0,1)}$ and $N_{(1,2)}.$

We consider the identification of the Lie algebra of the symmetry
group $G,$ as in (\ref{descompg3}),
\begin{eqnarray}\label{decompalgG}
\mathfrak{g}\equiv\mathfrak{n}_{(0,1)}\oplus\mathfrak{n}_{(1,2)}=\mathfrak{s}^1\oplus\mathbb{R}^2.
\end{eqnarray}

So $m=\dim(\mathfrak{g})=3,\ d_0=\dim(\mathfrak{n}_{(0,1)})=1$ and
$d_1=\dim(\mathfrak{n}_{(1,2)})=2.$

\vspace{.2cm} In this case,
$\mathcal{S}_{(\theta,\varphi,\psi,x)}=\{(\theta,\varphi,\psi,x,0,0,\xi,\xi
ru)\}$ cannot be decomposed in
$\mathcal{S}_{(0,1)}\oplus\mathcal{S}_{(1,2)},$ being
$\mathcal{S}_{(0,1)}=\mathcal{S}\cap\mathfrak{n}_{(0,1)}$ and
$\mathcal{S}_{(1,2)}=\mathcal{S}\cap\mathfrak{n}_{(1,2)}.$ Then we
have the general case considered in section \ref{sect:Lag-d'Alem-Poin}.

\vspace{.2cm} Then a generic element
$\mu\in\mathcal{S}_{(\theta,\varphi,\psi,x)}$ has the form
$\mu=(\varepsilon^0_1,h^1_1(x_1),h^1_2(x_1)),$ where $\varepsilon^0_1=\xi,$
$h^1_1(x_1)=-\xi\ r\ cos\varphi$ and $h^1_2(x_1)=-\xi\ r\
sin\varphi.$

And the Lagrange-d'Alembert-Poincar\'e equations by stages (see
formula (\ref{L-d'A-Pstagesgral})) for Euler's disk are:

$$\dot{\mu_0}(\xi)+\dot{\mu}_1(-\xi\ r\ cos\varphi,-\xi\ r\ sin\varphi)=\hspace{7cm}$$ $$=\left\langle\mu_0,\left[\eta^{(0,1)}-\mathcal{A}^{(0,1)}(\theta,\varphi,\dot{\theta},\dot{\varphi}),\xi\right]\right\rangle+\left\langle\mu_1,\left[
\eta^{(1,2)}-\mathcal{A}^{(1,2)}(\theta,\varphi,\dot{\theta},\dot{\varphi}),(-\xi\
r\ cos\varphi,-\xi\ r\ sin\varphi)\right]+\right.$$
$$\left.b^{(1,2)}_{(N_0,N_1)}\left(\eta^{(0,1)}-\mathcal{A}^{(0,1)}(\theta,\varphi,\dot{\theta},\dot{\varphi}),(-\xi\
r\ cos\varphi,-\xi\ r\
sin\varphi)\right)-b^{(1,2)}_{(N_0,N_1)}\left(\xi,\eta^{(1,2)}-\mathcal{A}^{(1,2)}(\theta,\varphi,\dot{\theta},\dot{\varphi})\right)\right.$$
$$\left.-a^{(1,2)}_{(N_0,N_1)}\left(\eta^{(0,1)}-
\mathcal{A}^{(0,1)}(\theta,\varphi,\dot{\theta},\dot{\varphi}),\xi\right)\right\rangle\
,\hspace{4cm}$$

$$\frac{\partial l}{\partial\theta}\ \delta\theta-\frac{d}{dt}\frac{\partial l}{\partial\dot{\theta}}\ \delta\theta=
\left\langle\mu_0,B^{(0,1)}(\theta,0,0,0)\left((\dot{\theta},0,0,0),(\delta\theta,0,0,0)\right)-
\left[\mathcal{A}^{(0,1)}(\theta,0,\delta\theta,0),\eta^{(0,1)}\right]\right\rangle+$$
$$\left\langle\mu_1,B^{(1,2)}(\theta,0,0,0),\left((\dot{\theta},0,0,0),(\delta\theta,0,0,0)\right)-
\left[\mathcal{A}^{(1,2)}(\theta,0,\delta\theta,0),
\eta^{(1,2)}\right]\right.$$
$$\left.-b^{(1,2)}_{(N_0,N_1)}\left(\mathcal{A}^{(0,1)}(\theta,0,\delta\theta,0),\eta^{(1,2)}\right)-b^{(1,2)}_{(N_0,N_1)}\left(\eta^{(0,1)},
\mathcal{A}^{(1,2)}
(\theta,0,\delta\theta,0)\right)\right\rangle\ ,$$

$$\frac{\partial l}{\partial\varphi}\ \delta\varphi-\frac{d}{dt}\frac{\partial l}{\partial\dot{\varphi}}\
\delta\varphi=\left\langle\mu_0,B^{(0,1)}(0,\varphi,0,0)\left((0,\dot{\varphi},0,0),(0,\delta\varphi,0,0)\right)-
\left[\mathcal{A}^{(0,1)}(0,\varphi,0,\delta\varphi),\eta^{(0,1)}\right]\right\rangle+$$
$$\left\langle\mu_1,B^{(1,2)}(0,\varphi,0,0),\left((0,\dot{\varphi},0,0),(0,\delta\varphi,0,0)\right)-
\left[\mathcal{A}^{(1,2)}(0,\varphi,0,\delta\varphi),\eta^{(1,2)}\right]\right.$$

$$\left.-b^{(1,2)}_{(N_0,N_1)}\left(\mathcal{A}^{(0,1)}(0,\varphi,0,\delta\varphi),\eta^{(1,2)}\right)-b^{(1,2)}_{(N_0,N_1)}
\left(\eta^{(0,1)},\mathcal{A}^{(1,2)}(0,\varphi,0,\delta\varphi)\right)\right\rangle\
,$$

where
\begin{eqnarray*}
\mu_0 &=& \frac{\partial l}{\partial\eta^{(0,1)}}(\eta^{(0,1)}\oplus\eta^{(1,2)}),\quad \text{with} \quad
\eta^{(0,1)}\oplus\eta^{(1,2)}\in\mathcal{S}_{(\theta,\varphi,\psi,x)},\\
\mu_1 &=& \frac{\partial l}{\eta^{(1,2)}}(\eta^{(0,1)}\oplus\eta^{(1,2)}),\\
\mathcal{A}(\theta,\varphi,\psi,x)&=& \mathcal{A}^{(0,1)}(\theta,\varphi,\psi,x)\oplus\mathcal{A}^{(1,2)}
(\theta,\varphi,\psi,x),\\
B(\theta,\varphi,\psi,x)\left((\dot{\theta},\dot{\varphi},0,0),(\delta\theta,\delta\varphi,0,0)
\right) &= &
B^{(0,1)}(\theta,\varphi,\psi,x)\left((\dot{\theta},\dot{\varphi},0,0),(\delta\theta,\delta\varphi,0,0)\right)\oplus\\
& & B^{(1,2)}(\theta,\varphi,\psi,x)\left((\dot{\theta},\dot{\varphi},0,0),(\delta\theta,\delta\varphi,0,0)\right).
\end{eqnarray*}

\vspace{.5cm} Since $\mathcal{A}(\dot{\theta},\dot{\varphi})=(0,0)$
(see equation (\ref{conexionA})), we have \newline
$\displaystyle\mathcal{A}^{(0,1)}(\theta,\varphi,\dot{\theta},\dot{\varphi})=\mathcal{A}^{(1,2)}(\theta,\varphi,\dot{\theta},
\dot{\varphi})=0$ and
$B(\theta,\varphi,0,0)\left((\dot{\theta},\dot{\varphi},0,0),(\delta\theta,\delta\varphi,0,0)\right)=$ \newline $d\mathcal{A}
(\theta,\varphi)(\dot{\theta},\dot{\varphi},\delta\theta,\delta\varphi)-\left[\mathcal{A}(\theta,\varphi)
(\dot{\theta},\dot{\varphi}),\mathcal{A}(\theta,\varphi)(\delta\theta,\delta\varphi)\right]=0.$

\vspace{.3cm}
Then, the equations that we obtain are
\begin{eqnarray*}
\dot{\mu_0}(\xi)+\dot{\mu}_1(\xi ru)&=& \left\langle\mu_0,[\eta^{(0,1)},\xi]\right\rangle+\left\langle
\mu_1,[\eta^{(1,2)},\xi ru]+b^{(1,2)}_{(N_0,N_1)}(\eta^{(0,1)},\xi ru) \right.\\
& & \left. -b^{(1,2)}_{(N_0,N_1)}(\xi,\eta^{(1,2)})-a^{(1,2)}_{(N_0,N_1)}(\eta^{(0,1)},\xi)\right\rangle,\\
\frac{\partial l}{\partial \theta}\ \delta\theta-\frac{d}{dt}\frac{\partial l}{\partial\dot{\theta}}\ \delta\theta &=&
\langle\mu_0,0\rangle+\left\langle\mu_1,-b^{(1,2)}_{(N_0,N_1)}(0,\eta^{(1,2)})-b^{(1,2)}_{(N_0,N_1)}(\eta^{(0,1)},0)
\right\rangle,\\
\frac{\partial l}{\partial \varphi}\ \delta\varphi-\frac{d}{dt}\frac{\partial l}{\partial\dot{\varphi}}\ \delta\varphi &=& \langle\mu_0,0\rangle+\left\langle\mu_1,-b^{(1,2)}_{(N_0,N_1)}(0,\eta^{(1,2)})-b^{(1,2)}_{(N_0,N_1)}(\eta^{(0,1)},0)
\right\rangle,\\
\mu_0 &= &\frac{\partial l}{\partial\eta^{(0,1)}}(\eta^{(0,1)},\eta^{(1,2)}),\\
\mu_1 &= &\frac{\partial l}{\partial\eta^{(1,2)}}(\eta^{(0,1)},\eta^{(1,2)}),\quad\text{with}\quad
\eta^{(1,2)}=\eta^{(0,1)} ru.
\end{eqnarray*}

\vspace{.5cm} On the other hand, using the fact that the groups
$N_{(0,1)}=S^1$ and $N_{(1,2)}=\mathbb{R}^2$ are abelian, we can write the
equations as follows

\begin{eqnarray*}
\dot{\mu_0}(\xi)+\dot{\mu}_1(\xi ru)&=& \left\langle\mu_1,b^{(1,2)}_{(N_0,N_1)}(\eta^{(0,1)},\xi ru)-
b^{(1,2)}_{(N_0,N_1)}(\xi,\eta^{(1,2)})-a^{(1,2)}_{(N_0,N_1)}(\eta^{(0,1)},\xi)\right\rangle,\\
\frac{\partial l}{\partial\theta}\ \delta\theta-\frac{d}{dt}\frac{\partial l}{\partial\dot{\theta}}\ \delta\theta &=&
-\left\langle\mu_1,b^{(1,2)}_{(N_0,N_1)}(0,\eta^{(1,2)})+b^{(1,2)}_{(N_0,N_1)}(\eta^{(0,1)},0)\right\rangle,\\
\frac{\partial l}{\partial\varphi}\ \delta\varphi-\frac{d}{dt}\frac{\partial l}{\partial\dot{\varphi}}\
\delta\varphi &=&
-\left\langle\mu_1,b^{(1,2)}_{(N_0,N_1)}(0,\eta^{(1,2)})+b^{(1,2)}_{(N_0,N_1)}(\eta^{(0,1)},0)\right\rangle,\\
\mu_0 &=& \frac{\partial l}{\partial\eta^{(0,1)}}(\eta^{(0,1)},\eta^{(1,2)}),\\
\mu_1 &=& \frac{\partial l}{\partial(\eta^{(1,2})}(\eta^{(0,1)},\eta^{(1,2)}).
\end{eqnarray*}

\vspace{.5cm} According to the definitions of the bilinear forms $b$
and $a$ given in theorem \ref{formcorchete}, we have that
$\quad \displaystyle
b_{(N_0,N_1)}(\kappa,\eta)=\left[\left[e,\left[\kappa^{\mathcal{A}_{N_1}},\eta\right]\right]_{N_1}\right]_{N_{(0,1)}}$
\hspace{.2cm} and \newline $\displaystyle
a_{(N_0,N_1)}(\kappa,\xi)=\left[\left[e,-\mathcal{A}_{N_1}(e)\left(\left[\kappa^{\mathcal{A}_{N_1}},\xi^{\mathcal{A}_{N_1}}
\right]\right)\right]_{N_1}\right]_{N_{(0,1)}}.$

\vspace{.3cm} In this case, the corresponding $G$-invariant 1-form
connection
$\mathcal{A}_{\mathfrak{n}_1}:T_gG\rightarrow\mathfrak{n}_1$  over the
principal bundle $Q$ with structure group $N_1$ (see the first paragraph of the section \ref{sec:LieBracket}), is given by
$\displaystyle\mathcal{A}_{\mathfrak{n}_1}(\psi,x,\dot{\psi},\dot{x})=\dot{x}.$

\vspace{.2cm} So, we obtain \hspace{.5cm} $\displaystyle
b_{(N_0,N_1)}(\kappa,\eta)=\left[\left[e,\left[(\kappa,0),(0,\eta)\right]\right]_{N_1}\right]_{N_{(0,1)}}=0$
\hspace{.5cm} and

\vspace{.3cm} $\displaystyle
a_{(N_0,N_1)}(\kappa,\xi)=\left[\left[e,-\mathcal{A}_{N_1}(e)\left(\left[(\kappa,0),(\xi,0)\right]\right)\right]_{N_1}
\right]_{N_{(0,1)}}=0.$

\vspace{.5cm} Then, the expression of the
Lagrange-d'Alembert-Poincar\'e equations by stages for Euler's
disk is

\begin{eqnarray}\label{ecetapasdisco}
\frac{d}{dt}\frac{\partial l}{\partial\eta^{(0,1)}}(\xi)+\frac{d}{dt}
\frac{\partial l}{\partial\eta^{(1,2)}}(\xi ru)&=&0, \nonumber\\
\frac{\partial l}{\partial\theta}\
\delta\theta-\frac{d}{dt}\frac{\partial l}{\partial\dot{\theta}}\
\delta\theta &=& 0,\\
\frac{\partial l}{\partial\varphi}\
\delta\varphi-\frac{d}{dt}\frac{\partial l}{\partial\dot{\varphi}}\
\delta\varphi &=& 0.\nonumber
\end{eqnarray}

for all $\xi,\ \delta\theta, \ \delta\varphi\in\mathbb{R},$

\vspace{.5cm} The reduced Lagrangian for Euler's disk is given by

\vspace{.5cm} $\displaystyle
\ell(\theta,\varphi,\dot{\theta},\dot{\varphi},\eta^{(0,1)},\eta^{(1,2)})
=-M g r \sin\theta+ \frac{1}{2}\left(I_1+\frac{1}{4}M r^2
e^2\right)(\dot{\theta}^2+\dot{\varphi}^2\sin^2\theta)+ \frac{1}{2}
M (\eta^{(1,2)})^2 +$

$\displaystyle\frac{1}{2} M r^2\dot{\theta}^2+\frac{1}{2}M
r^2\dot{\varphi}^2\cos^2\theta+M r\
\eta^{(1,2)}_1(\dot{\theta}\sin\theta\sin\varphi-\dot{\varphi}\cos\theta\cos\varphi)-M
r\ \eta^{(1,2)}_2(\dot{\theta}\cos\varphi\sin\theta+$

$\displaystyle\dot{\varphi}\cos\theta\sin\varphi)+\frac{1}{2}I_3(\dot{\varphi}
\cos\theta+\eta^{(0,1)})^2+\frac{1}{2}M r e
\left[\eta^{(1,2)}_1(\dot{\varphi}\cos\varphi\sin\theta+\dot{\theta}\sin\varphi\cos\theta)+\right.$

$\displaystyle\left.\eta^{(1,2)}_2(\dot{\varphi} \sin\varphi
\sin\theta-\dot{\theta}\cos\theta\cos\varphi)\right.$ $\displaystyle
\left.-r \cos\theta\sin\theta \dot{\varphi}^2\right]-\frac{1}{2}M g
r e \cos{\theta},$

\vspace{.5cm} with $\eta^{(1,2)}=\left(\eta^{(1,2)}_1,\
\eta^{(1,2)}_2\right)\in\mathbb{R}^2.$

\vspace{.3cm} So we can
write the Lagrange-d'Alembert-Poincar\'e equations by stages  (equations (\ref{ecetapasdisco})) for
Euler's disk as follows

\begin{eqnarray*}{rl}
I_3(\dot{\eta}^{(0,1)}+\ddot{\varphi}\cos{\theta}-\dot{\theta}\dot{\varphi}\sin\theta)+M
r \langle\dot{\eta}^{(1,2)},u\rangle+M r^2
(\ddot{\varphi}\cos\theta-2\dot{\theta}\dot{\varphi}\sin\theta)& &\\
-\frac{1}{2}M
e r^2
(\ddot{\varphi}\sin\theta+2\dot{\theta}\dot{\varphi}\cos\theta)&=&0,\\
2(M
r^2+I_3-I_1)\dot{\theta}\dot{\varphi}\sin\theta\cos\theta-(I_1\sin^2\theta+(I_3+M
r^2)\cos^2\theta)\ddot{\varphi}+M r
\cos\theta & &\nonumber \\
(\dot{\eta}^{(1,2)}_1\cos{\varphi}+
\dot{\eta}^{(1,2)}_2\sin{\varphi})+ I_3
(\dot{\theta}\eta^{(0,1)}\sin\theta-\dot{\eta}^{(0,1)}\cos{\theta})& &\\
-\frac{1}{2}Mr e \left(\frac{1}{2}r e
(2\dot{\theta}\dot{\varphi}\sin\theta\cos\theta+\ddot{\varphi}\sin^2\theta)+
\sin\theta(\dot{\eta}^{(1,2)}_1\cos\varphi+\dot{\eta}^{(1,2)}_2\sin\varphi)+\right.& &\\
\left. 2r\dot{\theta}\dot{\varphi}
(\sin^2\theta-\cos^2\theta)-2r\ddot{\varphi}\sin\theta\cos\theta\right)&=& 0,\\
(I_1-I_3-M
r^2)\dot{\varphi}^2\sin\theta\cos\theta-I_3\eta^{(0,1)}\dot{\varphi}\sin\theta-(I_1+M
r^2)\ddot{\theta} +& &\\
M r \sin{\theta}(\dot{\eta}^{(1,2)}_2\cos{\varphi}-\dot{\eta}^{(1,2)}_1\sin{\varphi})-M g r \cos{\theta}+
\nonumber \\\frac{1}{2}M r e \left(\frac{1}{2}r
e \dot{\varphi}^2
\sin\theta\cos\theta+r\dot{\varphi}^2(\sin^2\theta-\cos^2\theta)+
\right.& & \nonumber \\
\left.\cos\theta(\dot{\eta}^{(1,2)}_2\cos\varphi-\dot{\eta}^{(1,2)}_1\sin\varphi)+g
\sin\theta-\frac{1}{2}r e \ddot{\theta}\right)&=&0,\nonumber\\
\eta^{(0,1)} r &=&\eta^{(1,2)},
\end{eqnarray*}

where the last equation is equivalent to the condition
$\displaystyle(\eta^{(0,1)},\eta^{(1,2)})\in\mathcal{S}_{(\theta,\varphi,\psi,x)}.$

\vspace{.5cm}
This system of
equations is the same that we have obtained in \cite{CDgrueso}, where we developed reduction in one stage.


\section{Appendix}
\renewcommand{\thesubsection}{\Alph{subsection}.}

\subsection{Reduced connections}\label{appendixa}
We shall recall some known facts on associated bundles and covariant derivatives. Proofs can be found in
\cite{CMR01a}.

Let a left principal bundle $\pi:Q\rightarrow Q/G$ and $A$ a principal connection. Consider a left action $\rho:G\times M\rightarrow M$ of the Lie group $G$ on a manifold $M.$ The \emph{associated bundle with standard fiber $M$} \index{Fibrado!asociado|textbf} is, by definition, $Q\times_G M=(Q\times M)/G,$ where the action of $G$ on $Q\times M$ is given by $g(q,m)=(gq,gm).$ The class (or orbit) of $(q,m)$ is denoted $[q,m]_G$. The \emph{projection} \index{Proyecci\'on!en fibrado
asociado|textbf} $\pi_M:Q\times_G M\rightarrow Q/G$ is defined by $\pi_M([q,m]_G)=\pi(q)$ and it is easy to check that is well defined and is a surjective submersion.

\vspace{.3cm} Let $[q_0,m_0]_G\in Q\times_G M$ and let $x_0=\pi(q_0)\in Q/G.$ Let $x(t),$ $t\in[a,b],$ be a curve on $Q/G$ and let $t_0\in[a,b]$ be such that $x(t_0)=x_0.$ The \emph{parallel transport} \index{Transporte paralelo!|textbf} of this element $[q_0,m_0]_G$ along  the curve $x(t)$ is defined to be the curve $[q,m]_G(t):=[x^h_{q_0}(t),m_0]_G,$
where $x^h_{q_0}(t)$ is the horizontal lift of the curve $x(t)$ with initial condition $q_0$.

For $t,t+s\in[a,b],$ we adopt the notation $\tau^t_{t+s}:\pi^{-1}_M(x(t))\rightarrow\pi^{-1}_M(x(t+s))$ for the parallel transport map along the curve $x(s)$ of any point $[q(t),m(t)]_G\in\pi^{-1}_M(x(t))$ to the corresponding point $\tau^t_{t+s}[q(t),m(t)]_G\in\pi^{-1}_M(x(t+s)).$
 Thus,
$\tau^t_{t+s}[q(t),m(t)]_G=$

$[x^h_{q(t)}(t+s),m(t)]_G.$

From now on, we assume that $M$ is a vector space and $\rho$ is a linear representation. In this case, the associated bundle with standard fiber $M$ is naturally a vector bundle. We will use the identification $TM=M\times M.$ We shall sometimes use the notation $\rho'(\xi)$ for the second component of the infinitesimal generator of an element $\xi\in\mathcal{G},$ that is, $\xi m=(m,\rho'(\xi)(m)).$  Thus, we have a linear representation of the Lie algebra $\mathcal{G}$ on the vector space $M$, $\rho':\mathcal{G}\rightarrow \text{End}(M)$ (the linear vector fields on $M$ are identified with the space of linear maps of $M$ to itself).

\begin{dfn}\label{derivcovar}
    Let $[q(t),m(t)]_G,\ t\in[a,b],$ be a curve on $Q\times_G M,$ denote by
\newline $\displaystyle x(t)=\pi_M([q(t),m(t)]_G)=\pi(q(t))$ its projection on the base $Q/G,$ and let $\tau^t_{t+s},$ denote the parallel transport along $x(t)$ from time $t$ to time $t+s,$ where $t,t+s\in[a,b]$.
 The \textbf{covariant derivative} of $[q(t),m(t)]_G$ along $x(t)$ is defined as follows $$\frac{D[q(t),m(t)]_G}{Dt}=\lim_{s\rightarrow 0}\frac{\tau^{t+s}_t([q(t+s),m(t+s)]_G)-[q(t),m(t)]_G}{s}.$$
\end{dfn}

Thus, the covariant derivative of $[q(t),m(t)]_G$ is an element of $\pi^{-1}_M(x(t)).$

Note that if $[q(t),m(t)]_G$ is a vertical curve, then the covariant derivative in the asso\-cia\-ted bundle is just the fiber derivative, since its base point is constant. That is,  $$\frac{D[q(t),m(t)]_G}{Dt}=[q(t),m'(t)]_G,$$ where $m'(t)$ is the time derivative of $m.$

By definition, a \emph{connection} (sometimes called an \emph{affine connection}) \index{Conexi\'on!af\'\in|textbf} $\nabla$ on a vector bundle $\tau:V\rightarrow Q$ is a
map $\nabla:\mathfrak{X}^{\infty}(Q)\times
\Gamma(V)\rightarrow \Gamma(V),$ say $(X,v)\mapsto \nabla_X v,$ having the following properties:
\begin{enumerate}
  \item $\displaystyle\nabla_{f_1X_1+f_2X_2}v=f_1\nabla_{X_1}v+f_2\nabla_{X_2}v$
   for all $X_i\in\mathfrak{X}^{\infty}(Q)$ (the space of smooth vector fields on $Q$), $f_i\in C^{\infty}(Q)$ (the space of smooth real valued functions on $Q$), $i=1,2,$ and all $v\in\Gamma(V)$ (the space of smooth sections of the vector bundle $V$);
  \item $\displaystyle\nabla_X(f_1v_1+f_2v_2)=X[f_1]v_1+f_1\nabla_X v_1+X[f_2]v_2+f_2\nabla_X
  v_2$ for all $X\in\mathfrak{X}^{\infty}(Q),$ \newline $f_i\in
  C^{\infty}(Q),$ and $v_i\in\Gamma(V),\ i=1,2;$ where $X[f]$ denotes the derivative of $f$ in the direction of the vector field $X.$
\end{enumerate}

Given a connection on $V,$ the \emph{parallel transport}
\index{Transporte paralelo!a lo largo de una curva|textbf} of a vector $v_0\in \tau^{-1}(q_0)$ along a curve $q(t)$ in $Q,\ t\in[a,b],$ such that $q(t_0)=q_0$ for a fixed $t_0\in[a,b],$ is the unique curve $v(t)$ such that $v(t)\in\tau^{-1}(q(t))$ for all $t,$ $v(t_0)=v_0,$ and which satisfies $\displaystyle\nabla_{\dot{q}(t)}v(t)=0$ for all $t.$ The operation of parallel transport establishes for each $t,s\in[a,b],$ a linear map $T^t_{t+s}:\tau^{-1}(q(t))\rightarrow \tau^{-1}(q(t+s))$ associated to each curve $q(t)$ in $Q.$ Then we can define the operation of \emph{covariant derivative}
\index{Derivada!covariante!de una curva|textbf} on curves $v(t)$ in $V$ similar to that in the previous definition \ref{derivcovar}; that is,
$$\left.\frac{Dv(t)}{Dt}=\frac{d}{ds}T^{t+s}_t v(t+s)\right|_{
s=0}.$$

Observe that the connection $\nabla$ can be recovered from the covariant derivative (and thus from the parallel transport operation). Indeed, $\nabla$ is given by $$\nabla_Xv(q_0)=\left.\frac{D}{Dt}v(t)\right|_{t=t_0},$$ where for each
$q_0\in Q,$ each $X\in\mathfrak{X}^{\infty}(Q)$ and each $v\in\Gamma(V),$ we have, by definiton, that $q(t)$ is any curve in $Q$ such that $\dot{q}(t_0)=X(q_0)$ and $v(t)=v(q(t))$ for all $t.$ This property es\-ta\-bli\-shes, in particular, the uniqueness of the connection associated to the covariant derivative $D/Dt.$
The notion of a {\bfi horizontal curve} $v(t)$  on $V$ is
defined by the condition  that its covariant derivative vanishes.  A vector
tangent to $V$ is called {\bfi horizontal} if it is tangent to a
horizontal  curve. Correspondingly, the {\bfi horizontal space} at a point $v \in V$ is the space of all horizontal vectors at $v$.

In the case of an associated bundle we recall from \cite{CMR01a} the following formula that gives the relation between the covariant
derivative of the affine connection and the principal connection:
\[ \frac{D[q(t), m(t)]_G}{Dt} = \left[q(t), - \rho ^\prime\left(A\left(q(t),
\dot{q}(t)\right)\right)m(t)  + \dot{m}(t)\right]_G.
\]



This gives an affine connection on $Q
\times_G M$, called $\tilde{\nabla}^A$.
More precisely, let $\varphi : Q/G \rightarrow  Q \times_G M$ be a section of the
associated bundle and let $X(x) \in T_x(Q/G)$ be a given vector
tangent to $Q/G$ at $x$. Let $x(t)$ be a curve in $Q/G$ such that
$\dot{x}(0) = X(x)$; thus, $\varphi \left(x(t)\right)$ is a curve
in  $Q \times_G M$.  The covariant derivative of the section
$\varphi$ with respect to $X$ at $x$ is then, by  definition,
\begin{equation}
\label{indecedconnection} \tilde{\nabla}^A_{X(x)} \varphi  =
\left. \frac{D\varphi \left(x(t)\right)} {Dt} \right |_{t =
0}.
\end{equation}
Notice that we only need to know $\varphi$ along the curve $x(t)$ in order to
calculate  the covariant derivative.


\begin{dfn}
\label{dfn214} The associated bundle with standard fiber $\mathfrak{g}$, where
the action of $G$ on $\mathfrak{g}$ is the adjoint action, is called the {\bfi
adjoint bundle}, and is sometimes denoted $\operatorname{Ad}(Q)$. We will use
the notation $\tilde{\mathfrak{g}} : = \operatorname{Ad}(Q)$ in this paper. We
let $\tilde {\pi}_G : \tilde{\mathfrak{g}} \rightarrow Q/G$ denote the
projection  given by $\tilde{\pi}_G\left([q, \xi]_G\right) = [q]_G$.
\end{dfn}

The following properties hold.\\

(a)
Let $[q(s), \xi(s)]_G$ be any curve in $\tilde{\mathfrak{g}}$.
Then
\[ \frac{D[q(s), \xi(s)]_G}{Ds} = \left[q(s), - \left[A\left(q(s),
\dot{q}(s)\right), \xi(s)\right]  + \dot{\xi}(s)\right]_G.
\]

(b)
\label{lem216} Each fiber $\tilde{\mathfrak{g}}_x$ of $\tilde{\mathfrak{g}}$
carries a natural Lie algebra structure defined by
\[
\left[[q, \xi]_G, [q,\eta]_G\right] = \left[q,[\xi, \eta]\right]_G.
\]



Let $\pi :Q \rightarrow Q/G$ be a principal
bundle with structure group $G$, as before. The tangent lift of
the action of $G$ on $Q$ defines an action of $G$ on $TQ$ and so we
can form the quotient $(TQ)/G =: TQ / G$. There is a well defined
map $\tau _Q/G : TQ/G \rightarrow Q/G$ induced by the tangent of
the projection map $\pi: Q \rightarrow Q/G $ and given by
$[v_q]_G \mapsto [q ]_G$. The vector bundle structure of $TQ$ is inherited by
this bundle.\\

(c)
\label{lem221} The rules
\[ [v_q]_G + [u_q]_G = [v_q + u_q]_G \quad \mbox{and} \quad \lambda [v_q]_G =
[\lambda v_q]_G,
\] where $\lambda \in \mathbb{R}$, $v_q, u_q \in T _q Q$, and $[v_q]_G$ and
$[u_q]_G$ are their equivalence classes in the quotient $TQ/G $, define a
vector bundle structure on $TQ/G$ having base $Q/G$. The fiber $(TQ/G)_x$ is
isomorphic, as a vector space, to $T_qQ$, for each $x = [q]_G$.\\

(d)
\label{lem222} The map $\alpha_A : TQ/G \rightarrow T(Q/G) \oplus
\tilde{\mathfrak{g}}$ defined by
\[ \alpha _A\left([q, \dot{q}]_G\right) = T\pi (q, \dot{q}) \oplus [q, A(q,
\dot{q})]_G
\] is a well defined vector bundle isomorphism.  The inverse of $\alpha_A$ is
given by
\[ \alpha_A^{-1}\left((x, \dot{x}) \oplus [q, \xi]_G\right) = [ (x,
\dot{x})^h_q + \xi q]_G.
\]
\vspace{0.5cm}

An {\bfi action} $\rho : G\times V \rightarrow V$ of a Lie group
$G$ on a vector bundle $\tau  : V \rightarrow Q$ with extra structure $[\,, ]$ (a Lie algebra structure on each fiber of $V,$ in such a way that $V$ is a Lie algebra bundle),
$\omega$ (a $V$-valued 2-form on $Q$) and $\nabla$ (a covariant derivative $D/Dt$ for curves in $V$ related in the standard way to a connection $\nabla$ on $V$) , is a vector bundle
action such that, for each $g \in G$, $\rho _g : V \rightarrow V$ is a morphism that commutes with the structures given by $[\,,],$ $\omega,$ $\nabla$ in the vector bundles.

\begin{dfn}
\label{dfn519}
Let $\tau : V \rightarrow Q$ be a vector bundle and let $D/Dt$ be
the covariant derivative along curves  associated to a connection
$\nabla$ on $V$. Let  $\rho : G\times V \rightarrow V$ be a vector
bundle action covering the action $\rho _0 : G \times Q
\rightarrow Q$ which we assume that it is a principal bundle. Let $A$ be a principal connection on the principal
$G$-bundle $Q \rightarrow Q/G$. Let $v(t)$ be any curve in $V$ and let $q(t) = \tau
\left(v(t)\right)$ for all $t$. Choose $t_0$ and
let $q_0 = q(t_0)$. Let $g_q(t)$ and $q_h(t)$ be such that $q_h(t)$ is a
horizontal curve, $q(t) = g_q(t)q_h(t)$, and $g_q(t_0) = e$. Then
we define
\[
v_h(t) = g^{-1}_q(t)v(t),
\]
\[
\left.\frac{D^{(A,H)} v(t)}{Dt} \right |_{t = t_0} = \left.\frac{D
v_h(t)}{Dt} \right |_{t = t_0},
\]
and
\[
\left.\frac{D^{(A,V)} v(t)}{Dt} \right |_{t = t_0} = \left.\frac{D
v(t)}{Dt}
\right |_{t = t_0}
- \left.\frac{D^{(A,H)} v(t)}{Dt} \right |_{t = t_0}.
\]
We will call
\[
\frac{D^{(A,H)} v(t)}{Dt}
\]
the {\bfi $A$-horizontal covariant derivative} of $v(t)$ and
\[
\frac{D^{(A,V)} v(t)}{Dt}
\]
the {\bfi $A$-vertical covariant derivative} of $v(t)$. For $X\in
\mathfrak{X}^\infty(Q)$ and $v \in \Gamma(V)$ we also define $\nabla ^{(A,
H)}_Xv$ and $\nabla ^{(A, V)}_Xv$ by
\[
\nabla ^{(A, H)}_Xv(q_0) = \left. \frac{D^{(A, H)}}{Dt}v(t)\right
|_{t = t_0}
\]
and
\[
\nabla ^{(A, V)}_Xv(q_0) = \left. \frac{D^{(A, V)}}{Dt}v(t)\right
|_{t = t_0},
\]
where the covariant derivatives on the right hand side are taken
along any smooth curve  $q(t)$ in $Q$ satisfying $q(t_0) = q_0$,
$\dot{q}(t_0) = X(q_0)$, and $v(t) = v\left(q(t)\right)$ for all
$t$. We will call $\nabla ^{(A, H)}$ the {\bfi $A$-horizontal
component} and $\nabla ^{(A, V)}$ the {\bfi
$A$-vertical component} of the connection $\nabla$.
\end{dfn}

We see from this definitions that
\[
\nabla _Xv(q_0) = \nabla ^{(A, H)}_Xv(q_0) + \nabla ^{(A,
V)}_Xv(q_0).
\]

The following lemma gives, in particular,  an alternative
characterization of $\nabla ^{(A, H)}_Xv$ and $\nabla ^{(A, V)}_Xv$
for a $G$-invariant section $v$ of $V$. It also shows that, when
restricted to invariant sections $v \in \Gamma ^G (V )$ and invariant functions on $Q$, the
operator $\nabla ^{(A,H)} $ has the formal pro\-per\-ties of a
connection.\\

If $G\times V \rightarrow V$ is a vector bundle action on $\tau : V \rightarrow
Q$, a section $s : Q \rightarrow V$ is called an {\bfi invariant section} if
for all $g \in G$  and all $q \in Q$ we have $gs(q) = s(gq)$. The set
$\Gamma^G(V)$ of invariant sections of $V$ is a subspace of $\Gamma(V)$.

\begin{lem}
\label{lem5110} \begin{itemize}

\item[{\rm (a)}] Let $\operatorname{Hor}^A(X) \equiv X_H$ and
$\operatorname{Ver}^A (X) \equiv X_V$, the horizontal and the vertical components of the vector field $X$, respectively.
Then we have, for each $q_0
\in Q$, each $X \in \mathfrak{X}^{\infty}(Q)$
and each $G$-invariant section $v \in \Gamma ^G(V)$,
\[
\nabla ^{(A, H)}_Xv(q_0) = \nabla _{X_ H}v(q_0)
\]

and

\[
\nabla ^{(A, V)}_Xv(q_0) = \nabla _{X_ V}v(q_0).
\]

\item[{\rm (b)}]
Let $v \in \Gamma^G(V)$ and let $q(t)$ be any curve in $Q$
such that $\dot{q}(t_0) = X(q_0)$. Define $q_h(t)$ and $g_q(t)$
as in
Definition {\rm \ref{dfn519}}.
Then
\[
\left.\nabla ^{(A, H)}_Xv(q_0) = \frac{D}{Dt}g^{-1}_q(t)v(t)
\right |_{t = t_0} = \left. \frac{D}{Dt} v (q _h (t)) \right |_{t
= t_0}
\]
and
\[
\left.\nabla ^{(A, V)}_Xv(q_0) = \frac{D}{Dt}g_q(t)v(t_0) \right
|_{t = t_0}.
\]
In particular, $\nabla ^{(A, V)}_Xv(q_0)$ depends only on $\xi _0 =
\dot{g}_q(t_0)$ and $v(q_0) = v(t_0)$.

\item[{\rm (c)}]
Let $v \in \Gamma ^G(V)$, $q(t)$ a curve in $Q$, and $v(t) = v\left(q(t)\right)$ for all $t$. Then
\[
\left. \frac{D^{(A, H)}}{Dt}v(t)\right |_{t = t_0} =
\left.\frac{D}{Dt}g^{-1}_q(t)v(t) \right |_{t = t_0} = \left. \frac{D}{Dt} v (q
_h (t)) \right |_{t = t_0}
\]
and
\[
\left. \frac{D^{(A, V)}}{Dt}v(t)\right |_{t = t_0} =
\left.\frac{D}{Dt}g_q(t)v(t_0) \right |_{t = t_0}.
\]
\end{itemize}
\end{lem}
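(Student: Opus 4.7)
The plan is to exploit the $G$-invariance of $v$ to reduce every statement to an ordinary covariant derivative. The single key identity is
\[
g_q^{-1}(t)\, v(q(t)) = v(q_h(t)),
\]
which follows from $q(t)=g_q(t)q_h(t)$ together with $v(g_q(t)q_h(t)) = g_q(t)\,v(q_h(t))$. I would also differentiate $q(t)=g_q(t)q_h(t)$ at $t=t_0$, using $g_q(t_0)=e$, to get $\dot q(t_0) = \dot g_q(t_0)\,q_0 + \dot q_h(t_0)$. The first summand is vertical (it is the infinitesimal generator of $\xi_0 := \dot g_q(t_0)$ at $q_0$) and the second horizontal (since $q_h$ is a horizontal curve), so this is the decomposition $X(q_0) = X_V(q_0) + X_H(q_0)$; in particular $\dot q_h(t_0) = X_H(q_0)$ and $\xi_0 q_0 = X_V(q_0)$.

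I would then prove part~(c) first, and derive (b) and (a) by specialization. For the horizontal part, Definition~\ref{dfn519} gives $D^{(A,H)}v(t)/Dt = D[g_q^{-1}(t)v(t)]/Dt$, and the identity above yields
\[
\left.\frac{D^{(A,H)}v(t)}{Dt}\right|_{t_0} = \left.\frac{D}{Dt}v(q_h(t))\right|_{t_0} = \nabla_{\dot q_h(t_0)} v(q_0) = \nabla_{X_H(q_0)} v(q_0),
\]
which proves the first two equalities in (c) and the horizontal identity in (a). For the vertical part, since $Dv(t)/Dt|_{t_0} = \nabla_{X(q_0)}v(q_0)$ and $\nabla$ is linear in its first argument, subtraction gives
\[
\left.\frac{D^{(A,V)}v(t)}{Dt}\right|_{t_0} = \nabla_{X(q_0)}v(q_0) - \nabla_{X_H(q_0)}v(q_0) = \nabla_{X_V(q_0)} v(q_0),
\]
establishing the vertical identity in (a).

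To obtain the explicit formula $D^{(A,V)}v(t)/Dt|_{t_0} = D[g_q(t)v(t_0)]/Dt|_{t_0}$, I would again apply invariance: $g_q(t)v(t_0) = g_q(t)v(q_0) = v(g_q(t)q_0)$. This is a curve in $V$ whose base point is $g_q(t)q_0$, with tangent $\xi_0 q_0 = X_V(q_0)$ at $t_0$, so its covariant derivative at $t_0$ equals $\nabla_{X_V(q_0)}v(q_0)$, matching the previous calculation. Parts~(b) and (a) then follow by taking $v(t) = v(q(t))$ and evaluating at $t=t_0$. The only mildly delicate step is the identification $\dot q_h(t_0) = X_H(q_0)$, which depends on $g_q(t_0)=e$ and the horizontality of $q_h$; everything else is routine bookkeeping with the $G$-action on $V$ and the definitions of $\nabla^{(A,H)}$ and $\nabla^{(A,V)}$.
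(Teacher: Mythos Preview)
The paper does not supply its own proof of this lemma: it appears in Appendix~A under the blanket statement ``Proofs can be found in \cite{CMR01a},'' so there is no in-paper argument to compare against. Your proof is correct and is essentially the natural one: the two identities $g_q^{-1}(t)v(q(t)) = v(q_h(t))$ and $g_q(t)v(q_0) = v(g_q(t)q_0)$ coming from $G$-invariance, together with the splitting $\dot q(t_0) = \xi_0 q_0 + \dot q_h(t_0)$ obtained by differentiating $q(t)=g_q(t)q_h(t)$ at $t_0$, immediately reduce both $D^{(A,H)}/Dt$ and $D^{(A,V)}/Dt$ to ordinary covariant derivatives of the section $v$ along the curves $q_h(t)$ and $g_q(t)q_0$, whose tangents at $t_0$ are $X_H(q_0)$ and $X_V(q_0)$. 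The claim in~(b) that $\nabla^{(A,V)}_X v(q_0)$ depends only on $\xi_0$ and $v(q_0)$ is justified by your last computation, since the covariant derivative of $t\mapsto g_q(t)v(t_0)$ at $t_0$ is determined by the tangent vector of that curve in $V$, which is the infinitesimal action of $\xi_0$ on $v(t_0)$.
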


Let $\tau: V \rightarrow Q$ be a given vector bundle and let $\rho :  G\times V
\rightarrow V$ denote a given $G$-action on $V.$ Then the quotient $V/G$ carries a naturally defined vector bundle
structure over the base $Q/G$,  say $\tau /G : V/G \rightarrow Q/G$, where
$(\tau /G) ([v]_G)$ is defined by $(\tau /G) ([v]_G) = [\tau v]_G$.
The projection $\pi _G(V): V \rightarrow V/G$ is a surjective vector bundle
homomorphism covering $\pi$, and the restriction $\pi _G(V)|\tau ^{-1}(q): \tau
^{-1}(q) \rightarrow (\tau /G) ^{-1}([q]_G)$ is a linear isomorphism for each
$q \in Q$.
Let $G\times V \rightarrow V$ be a vector bundle action and let
$\pi _G(V): V \rightarrow V/G$ be the vector bundle homomorphism described above.
Then $ ( \pi _G(V) ) ^\ast : \Gamma(V/G) \rightarrow
\Gamma^G(V)$ is a linear isomorphism.

\begin{dfn}
\label{dfn5120} Let $[q_0, \xi _0]_G \in \tilde{\mathfrak{g}}$ with
$\tilde{\pi}_G [q_0, \xi _0]_G = [q_0]_G$ and let $[v]_G \in \Gamma(V/G)$,
where $v \in \Gamma ^G(V).$
Let $I_G^V(TQ)\rightarrow Q/G$ be the \textbf{vertical invariant bundle} whose fiber is $I_G^V(TQ)_x:=\{Y\in\mathfrak{X}^{\infty}(\pi^{-1}(x))|g^*Y=Y\}.$

Consider the horizontal invariant bundle, that is, the vector space of all horizontal invariant vector fields on $Q$ along $\pi^{-1}(x)$ given by $I_G^H(TQ)_x:=\{X:\pi^{-1}(x)\rightarrow TQ:X(q)\in \text{Hor}T_qQ,\ \forall q\in Q,\ g^*X=X\}.$

Let $Y_0 \in
I^V_{G}(TQ)$ such that $\beta _A(Y_0) = [q_0, \xi _0]_G$, where the map
\[ \beta _A : I^V_{G}(TQ) \rightarrow \tilde{\mathfrak{g}}
\] is the well defined Lie algebra isomorphism given by
\[ \beta _A(Y) = [q, A\left(Y(q)\right)]_G,
\] where $Y \in I^V_{G}(TQ)_x$, $x \in Q/G $, and $q\in\pi ^{-1}(x)$ is
arbitrary.

Then \begin{itemize}

\item[{\rm (a)}] The {\bfi quotient, or reduced, vertical connection} is
defined by
\[ \left[\nabla ^{(A, V)}\right] _{G ,[q_0, \xi _0]_G}[v]_G = \left[\nabla
^{(A, V)}_{Y_0}v\right]_G.
\]
\item[{\rm (b)}] Let  $\bar{X}_0 \in T_{[q_0]_G}(Q/G)$, define $X_0 = \bar{X}_0
^{h}$, so $X_0 \in I^H_{G}(Q)$, and $Z_0 = X_0 + Y_0$. The {\bfi quotient, or
reduced, connection} is defined by the condition
\[ \left[\nabla ^{(A)}\right] _{G ,\bar{X}_0 \oplus [q_0, \xi _0]_G}[v]_G =
\left[ \nabla _{Z_0}v \right] _G,
\] or by the equivalent condition
\[      \left[\nabla ^{(A)}\right] _{G ,\bar{X}_0 \oplus [q_0, \xi _0]_G}[v]_G
=      \left[\nabla ^{(A, H)}\right] _{G ,\bar{X}_0}[v]_G +      \left[\nabla
^{(A, V)}\right] _{G ,[q_0, \xi _0]_G}[v]_G.
\]
\end{itemize}
\end{dfn}

The following lemma establishes the link between the notions of quotient
vertical co\-va\-riant derivative (resp. quotient co\-va\-riant derivative) and
quotient vertical connection (resp. quotient connection).
\begin{lem}
\label{lem5121} Let $[q_0, \xi _0]_G \in \tilde{\mathfrak{g}}$ and let $[v]_G
\in \Gamma(V/G)$, where $v \in \Gamma ^G(V)$ as above. Let $[q]_G(t) = x(t)$ be any curve in $Q/G$ such that
$\tilde{\pi}_G\left([q_0, \xi _0]_G \right) = [q]_G(t_0)$ and let, with a
convenient abuse of notation, $[v]_G(t) = [v]_G\left([q]_G(t)\right)$. Assume
that $D/Dt$ is $G$-invariant. Then we have 

\[ \left[\left.\frac{D^{(A, V)}}{Dt}\right]_{G,[q_0, \xi _0]_G} [v]_G(t) \right
|_{t = t_0} = \left[\left.\frac{D g_0(t)v_0}{Dt} \right |_{t = t_0}\right]_G= \left[\nabla ^{(A, V)}\right] _{G ,[q_0, \xi _0]_G}[v]_G
\left([q]_G(t_0)\right),
\]

where $v_0 = v(q_0)$ and $g_0(t)$ is any curve on $G$ such that  $g_0(t_0) = e$
and  $\dot{g}_0(t_0) = \xi_0$.
\end{lem}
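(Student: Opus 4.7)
The plan is to reduce this to Lemma \ref{lem5110}(b)--(c) together with Definition \ref{dfn5120}(a), then check that all the auxiliary choices wash out in the quotient. I will verify the right equality first, then the left, then address well-definedness.

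For the right equality, by Definition \ref{dfn5120}(a) I have
$$
[\nabla^{(A,V)}]_{G,[q_0,\xi_0]_G}[v]_G = \left[\nabla^{(A,V)}_{Y_0} v\right]_G,
$$
where $Y_0 \in I^V_G(TQ)$ is any $G$-invariant vertical vector field with $\beta_A(Y_0)=[q_0,\xi_0]_G$, so in particular $Y_0(q_0) = \xi_0 q_0$. I then pick any curve $\tilde q(t)$ with $\tilde q(t_0)=q_0$ and $\dot{\tilde q}(t_0)=Y_0(q_0)=\xi_0 q_0$, and decompose $\tilde q(t)=g_{\tilde q}(t)\tilde q_h(t)$ with $\tilde q_h$ horizontal and $g_{\tilde q}(t_0)=e$. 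Since $\dot{\tilde q}(t_0)$ is already vertical, $\dot{g}_{\tilde q}(t_0)=\xi_0$. Lemma \ref{lem5110}(b) then gives
$$
\nabla^{(A,V)}_{Y_0} v(q_0) = \left.\frac{D}{Dt} g_{\tilde q}(t) v_0 \right|_{t=t_0},
$$
and by the same lemma the right side depends only on $(\xi_0, v_0)$. Hence I may swap $g_{\tilde q}$ for the stipulated curve $g_0$ with $g_0(t_0)=e$, $\dot g_0(t_0)=\xi_0$, and the middle and rightmost expressions agree after taking $G$-classes.

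For the left equality, the operation $[D^{(A,V)}/Dt]_{G,[q_0,\xi_0]_G}$ is defined by lifting the curve $[q]_G(t)$ to a curve $q(t)$ in $Q$ with $q(t_0)=q_0$ and vertical velocity at $t_0$ equal to $\xi_0 q_0$ (the subscript $[q_0,\xi_0]_G$ selects precisely this vertical component), computing the $A$-vertical covariant derivative of the $G$-invariant representative $v(q(t))$, and taking the $G$-class. Writing $q(t)=g_q(t) q_h(t)$ with $q_h$ horizontal and $g_q(t_0)=e$, so that $\dot g_q(t_0)=\xi_0$, Lemma \ref{lem5110}(c) applied to $v\in \Gamma^G(V)$ gives
$$
\left.\frac{D^{(A,V)}}{Dt} v(q(t)) \right|_{t=t_0} = \left.\frac{D}{Dt} g_q(t) v_0 \right|_{t=t_0},
$$
and once again the right side depends only on $(\xi_0, v_0)$, matching the middle expression modulo $G$.

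The place where I expect to need care is well-definedness: that all quantities descend to $(V/G)_{[q_0]_G}$ independently of the choices of $Y_0$, of the lifted curve $q(t)$ (and the associated $g_q$), and of $g_0$. This is exactly the content of the ``depends only on $\xi_0$ and $v_0$'' clause in Lemma \ref{lem5110}(b), combined with the $G$-invariance hypothesis on $D/Dt$ and the identification $(\pi_G(V))^\ast : \Gamma(V/G) \to \Gamma^G(V)$ recalled just before Definition \ref{dfn5120}. Once these are in place no further computation is needed, and chaining the two equalities above completes the proof.
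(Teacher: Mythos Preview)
Your proof is correct. The paper itself does not supply a proof of this lemma: it appears in Appendix~A among ``known facts on associated bundles and covariant derivatives'' whose proofs are delegated to \cite{CMR01a}, so there is no in-paper argument to compare against. Your approach---unwinding Definition~\ref{dfn5120}(a) via Lemma~\ref{lem5110}(b) for the right equality, and Lemma~\ref{lem5110}(c) for the left, then invoking the ``depends only on $\xi_0$ and $v_0$'' clause together with $G$-invariance of $D/Dt$ to handle well-definedness---is exactly the natural route and is the one implicit in the source \cite{CMR01a}.

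One small clarification: the paper does not give an explicit prior definition of the symbol $\left[D^{(A,V)}/Dt\right]_{G,[q_0,\xi_0]_G}$, so part of what Lemma~\ref{lem5121} is doing is asserting that the indicated quotient object is well defined and equals the other two. Your reading of the subscript $[q_0,\xi_0]_G$ as prescribing the vertical component of the lifted velocity is the correct interpretation, and your well-definedness paragraph addresses precisely the point that needs checking.
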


\subsection{The Lagrange-d'Alembert-Poincar{\'e} equations}\label{appendixb}

In this section we briefly review several definitions and results, stated and proved in  \cite{CMR01b}.

\paragraph{The Lagrange-d'Alembert principle.}
Let $Q$ be a
manifold and let $\mathcal{D}$ be a distribution on $Q$, that is,
$\mathcal{D}$ is a subbundle of $TQ$ and  let $L : Q \rightarrow
\mathbb{R}$ be a given Lagrangian. We can state the
Lagrange-d'Alembert principle as follows: \vspace{12pt}

{\it A curve $q(t)$, $t \in [t_0, t_1]$, on $Q$ is an actual motion
of the system if and only if $\dot{q}(t) \in \mathcal{D}_{q(t)}$ for
all $t$ and, besides, for any deformation $q(t, \lambda)$  of $q(t)$
such that the corresponding variation
$$
\delta q(t) = \frac{\partial q(t, \lambda)}{\partial \lambda}\Big |
_{\lambda = 0}
$$
satisfies $\delta q(t) \in \mathcal{D}_{q(t)}$ for all $t$, the
following condition holds
$$
\delta \int_{t_0}^{t_1}L(q, \dot{q})dt = 0.
$$
}

Here we use the standard notation in the calculus of variations,
namely,
$$
\delta \int_{t_0}^{t_1}L(q, \dot{q})dt = \frac{\partial}{\partial
\lambda}\Big | _{\lambda = 0} \int_{t_0}^{t_1} L\left(q(t, \lambda),
\dot{q}(t, \lambda)\right)dt.
$$
Using the Lagrange-d'Alembert Principle, equations of motion, called\textit{ Lagrange-d'Alembert equations}, can be written in a local chart as follows
\begin{align}
\frac{\partial L}{\partial q}(q, \dot{q})
-
\frac{d}{dt}\frac{\partial L}{\partial \dot{q}}(q, \dot{q})
&
\in( \mathcal{D}_q)^0,\\
(q, \dot{q})
&
\in  \mathcal{D}_q.
\end{align}

The \textit{form} of these equations is \textit{independent}
of the choice of coordinates, which is one of the great advantages of Lagrange-d'Alembert Principle.

\paragraph{Nonholonomic Systems with Symmetry.} \label{Nonholonomic Systems with Symmetry}
Now we shall assume
that $\pi : Q \rightarrow Q/G$ is a principal bundle with structure
group $G$ and we denote by $\mathcal{V}$ the vertical distribution,
that is, $\mathcal{V}_q = T_q\left(\pi ^{-1}([q]_G)\right)$, for
each $q \in Q$, which is obviously an integrable distribution whose
integral manifolds are the group orbits $\pi ^{-1}([q]_G)$.

Following \cite{BKMM96} let us consider the following condition, for simplicity:
\vspace{12pt}

(A1) DIMENSION ASSUMPTION. {\it For each $q \in Q$ we have the
equality

\begin{equation}\label{hipdimension}
T_q Q = \mathcal{D} _q + \mathcal{V}_q.
\end{equation}}

It is easy to see that, under assumption (A1), the dimension
of the space $\mathcal{S} _q = \mathcal{D} _q \cap \mathcal{V} _q$
does not depend on $q \in Q$, and moreover, the collection of spaces
$\mathcal{S} _q$, $q \in Q$, is a subbundle of $\mathcal{D}$, of
$\mathcal{V},$ and of $TQ$.

Also we consider the following condition: \vspace{12pt}

(A2) INVARIANCE OF $\mathcal{D}$. {\it The distribution
$\mathcal{D}$ is $G$-invariant.} \vspace{12pt}

It follows immediately from (A1) and (A2) that
$\mathcal{S}$ is a $G$-invariant distribution.

\paragraph{The Nonholonomic Connection.} It is known, and easy to
prove, that there is always a $G$-invariant metric on $Q$. See, for
example, \cite{CMR01a}.
In many important physical examples there is a natural way of
choosing such a metric, representing, for instance, the
inertia tensor of the system (see \cite{BKMM96}).

Let us choose an invariant metric on $Q.$
Then, under
condition (A1), we can define uniquely a principal connection
form $A: TQ \rightarrow \mathfrak{g}$ such that the horizontal
distribution $\operatorname{Hor}^A TQ$ satisfies the condition that,
for each $q$, the space $\operatorname{Hor}^A T_q Q$ coincides with
the orhogonal complement $\mathcal{H}_q$ of the space
$\mathcal{S}_q$ in $\mathcal{D}_q$.
This connection is called the {\bfi nonholonomic connection.}

For
each $q \in Q$, let us denote $\mathcal{U}_q$ the orthogonal
complement of $\mathcal{S}_q$ in $\mathcal{V}_q$. Then it is easy to
see that $\mathcal{U}$ is a distribution and we have the Whitney sum
decomposition
$$
TQ = \mathcal{H}\oplus \mathcal{S}\oplus \mathcal{U}.
$$
We obviously have $\mathcal{D} = \mathcal{H}\oplus \mathcal{S}$ and $\mathcal{V} = \mathcal{S}\oplus \mathcal{U}.$

Under the additional assumption (A2), all three distributions
$\mathcal{H}$, $\mathcal{S}$ and $\mathcal{U}$ are $G$-invariant,
therefore we can write,
$$
TQ/G = \mathcal{H}/G\oplus \mathcal{S}/G\oplus \mathcal{U}/G.
$$

\paragraph{The Geometry of the Reduced Bundles.}\label{The Geometry of the Reduced Bundles} Recall from
\cite{CMR01a} that there is a vector bundle isomorphism
$$
\alpha _A : TQ/G \rightarrow T(Q/G)\oplus \tilde{\mathfrak{g}},
$$
where $\tilde{\mathfrak{g}}$ is the adjoint bundle of the principal
bundle $Q$ defined as follows
$$
\alpha _A [q, \dot{q}]_G = T\pi (q, \dot{q})\oplus [q, A(q,
\dot{q})]_G.
$$
Notice that the bundle $T(Q/G)\oplus \tilde{\mathfrak{g}}$ does not
depend on the connection $A$, however the vector bundle isomorphism
$\alpha_A$ does depend on $A$. It is easy to see that $\alpha_A(\mathcal{H}/G) = T(Q/G),$ and
$\alpha_A(\mathcal{V}/G) = \tilde{\mathfrak{g}}.$
Define the subbundles $\tilde{\mathfrak{s}}$ and
$\tilde{\mathfrak{u}}$ of  $\tilde{\mathfrak{g}}$ by $\tilde{\mathfrak{s}} = \alpha_A(\mathcal{S}/G)$
and $\tilde{\mathfrak{u}} = \alpha_A(\mathcal{U}/G)$
respectively. Clearly, we have,
$$
\tilde{\mathfrak{g}} = \tilde{\mathfrak{s}}\oplus
\tilde{\mathfrak{u}}.
$$


\paragraph{Lagrange-d'Alembert-Poincar\'{e} Equations.}\label{reducedLPE}

Let
$l : T(Q/G)\oplus \tilde{\mathfrak{g}} \rightarrow \mathbb{R}$ be a given reduced Lagrangian. We will represent an element of
$T(Q/G)\oplus \tilde{\mathfrak{g}}$ using the notation
$(x, \dot{x}, \bar{v})$,
where
$x \in Q/G,$
$(x,\dot{x}) \in T_x(Q/G)$
and
$\bar{v} \in \tilde{\mathfrak{g}}_x.$
Care should be exercised using this notation since the bundle
$T(Q/G)\oplus \tilde{\mathfrak{g}}$
is not necessarily trivial.
However, we can still give a precise meaning to the
partial derivatives
\[
\frac{\partial l}{\partial x}, \quad \frac{\partial l}{\partial
\dot{x}} \quad  \mbox{and} \quad \frac{\partial l}{\partial \bar{v}},
\]
upon the choice of an affine connection
$\nabla$ on $Q/G$ and an affine connection
$\tilde{\nabla}^A$
on the vector bundle
$\tilde{\mathfrak{g}},$ which naturaly gives an affine connection on
$T(Q/G)\oplus \tilde{\mathfrak{g}},$
following \cite{CMR01a}.
First of all,  since $\tilde{\mathfrak{g}}$ and $T(Q/G)$ are vector bundles we
may interpret  the last two derivatives as being fiber derivatives
which are elements of the dual bundles
$T^{\ast}(Q/G)$ and $\tilde{\mathfrak{g}}^{\ast}$,  respectively,
and  for this, one does not need any extra structure other than the bundle structure.
In other words, for given $(x_0, \dot{x}_0, \bar{v}_0)$ and $(x_0,
x^\prime, \bar{v}^\prime)$ we define
\[
\left.\frac{\partial l}{\partial \dot{x}}(x_0, \dot{x}_0, \bar{v}_0)
\cdot x^\prime = \frac{d}{ds}\right |_{s = 0}l(x_0, \dot{x}_0 + s
x^\prime, \bar{v}_0)
\]
and
\[
\left.\frac{\partial l}{\partial \bar{v}}(x_0, \dot{x}_0, \bar{v}_0)
\cdot \bar{v}^\prime = \frac{d}{ds}\right |_{s = 0}l(x_0, \dot{x}_0
, \bar{v}_0 + s \bar{v}^\prime).
\]
Now we shall define the derivative
$\partial l /
\partial x $. Let $(x_0,
\dot{x}_0,\bar{v}_0)$ be  a given element of $T(Q/G)\oplus
\tilde{\mathfrak{g}}$.  For any given curve $x(s)$ on $Q/G$, let
$\left(x(s), \bar{v}(s)\right)$ be the horizontal  lift of $x(s)$
with respect to the connection $\tilde{\nabla}^A $ on
$\tilde{\mathfrak{g}}$ such that $\left(x(0), \bar{v}(0)\right) =
\left(x_0, \bar{v}_0\right),$ and let $\left(x(s), u(s)\right)$ be
the horizontal lift of $x(s)$ with respect to the connection
$\nabla$ such that $\left(x(0), u(0)\right) =
\left(x_0,\dot{x}_0\right)$. (Notice that in general, $\left(x(s),
u(s)\right)$ is {\it not} the tangent vector $\left(x(s),
\dot{x}(s)\right)$ to $x(s)$). Thus, $\left(x(s), u(s),
\bar{v}(s)\right)$ is a horizontal curve with respect to the
connection $C = \nabla \oplus \tilde{\nabla}^A$ naturally defined on
$T(Q/G)\oplus \tilde{\mathfrak{g}}$ in terms of the connection
$\nabla$ on $T(Q/G)$ and the connection $\tilde{\nabla}^A$ on
$\tilde{\mathfrak{g}}$.
\begin{dfn}\label{dfnloc324}
The {\bfi covariant derivative} of $l$ with respect to $x$ at $(x_0,
\dot{x}_0, \bar{v}_0)$ in the direction of $\left(x(0),
\dot{x}(0)\right)$ is defined by
\[
\left.\frac{\partial^C l}{\partial x}(x_0, \dot{x}_0,
\bar{v}_0)\left(x(0), \dot{x}(0)\right) = \frac{d}{ds}\right |_{s =
0} l\left(x(s), u(s), \bar{v}(s)\right).
\]
We shall often write
\[
\frac{\partial^C l}{\partial x} \equiv \frac{\partial l}{\partial
x},
\]
whenever there is no danger of confusion.
\end{dfn}

The covariant derivative on a given vector bundle, for instance
$\tilde{\mathfrak{g}}$, induces a corresponding covariant derivative
on the dual bundle, in our case $\tilde{\mathfrak{g}}^{\ast}$.  More
precisely, let $\alpha (t) $ be a curve in
$\tilde{\mathfrak{g}}^{\ast}$.  We define the covariant derivative
of $\alpha (t)$ in such a way that for  any curve $\bar{v}(t)$ on
$\tilde{\mathfrak{g}}$  such that both  $\alpha (t)$ and
$\bar{v}(t)$ project on the same curve $x(t)$ on $Q/G$,  we have
\[
\frac{d}{dt}\left\langle\alpha (t), \bar{v}(t)\right\rangle =
\left\langle\frac{D \alpha (t)}{Dt}, \bar{v}(t)\right\rangle +
\left\langle\alpha (t), \frac{D\bar{v}(t)}{Dt}\right\rangle .
\]
Likewise we can define the covariant derivative in the vector bundle
$T^\ast(Q/G)$. Then we obtain a covariant derivative on the vector
bundle $T^\ast(Q/G)\oplus\tilde{\mathfrak{g}}^\ast.$

It is in the sense of this definition that terms like
\[
\frac{D}{Dt}\frac{\partial l}{\partial \dot{x}}
\]
and
\[
\frac{D}{Dt}\frac{\partial l}{\partial\bar{v}}
\]
should be interpreted. In the first case,  $D/Dt$ means the covariant derivative in the bundle
$T^{\ast}(Q/G)$ and, in the second case, $D/Dt$  is the covariant
derivative in the bundle $\tilde{\mathfrak{g}}^{\ast}$.


Recall that the curvature $B\equiv B^{\mathcal{A}}$ of $\mathcal{A}$ is given by
\begin{eqnarray}\label{curvatura}
B(q)(X_1(q),X_2(q))=-\mathcal{A}([X_1^h,X_2^h])(q),
\end{eqnarray}
where $X,Y$ are vector fields on $Q/G$ and $X_1^h, X_2^h$ are their horizontal lifts, respectively.

The curvature 2-form $B\equiv B^{\mathcal{A}}$ of the connection $\mathcal{A}$ induces a $\widetilde{\mathfrak{g}}$-valued 2-form $\widetilde{B}\equiv\widetilde{B}^{\mathcal{A}}$ on $Q/G$ given by
\begin{equation}\label{betilde}
\widetilde{B}(x)(\delta x, \dot{x})=[q,B(q)(\delta q,\dot{q})]_G,
\end{equation}
where for each $(x,\dot{x})$ and $(x,\delta x)$ in $T_x(Q/G),$ $(q,\dot{q})$ and $(q,\delta q)$ are any elements of $T_qQ$ such that $\pi(q)=x,$ $T\pi(q,\dot{q})=(x,\dot{x})$ and $T\pi(q,\delta q)=(x,\delta x)$ (see \cite{CMR01a} for the proof).

The $\widetilde{\mathfrak{g}}$-valued 2-form $\widetilde{B}$ on $Q/G$ will be called the \textbf{\emph{reduced curvature form}}.\label{def:Btilde}


\paragraph{The La\-gran\-ge-d'Al\-em\-bert-Poincar\'{e} Equations.} In \cite{CMR01b} and \cite{CMR01a} the theory of Lagrange-Poincar\'e and Lagrange-d'Alembert-Poincar\'e equations has been developed. Here we recall the results that we use.

\begin{thm}\label{thm326}
Let $q (t)$ be a curve in $Q$ such that $( q (t), \dot{q} (t) ) \in
{\mathcal D}_{q (t) }$ for all $t$ and let $( x (t),  \dot{x}(t),
\bar{v}(t) ) = \alpha _A \left( [ q (t), \dot{q} (t) ]_{G}  \right)$
be the corresponding curve in $T (Q/G) \oplus \tilde{\mathfrak{s}
}$.  The following conditions are equivalent:
\begin{itemize}
\item[{\rm (i)}]
The {\bfi La\-gran\-ge-d'Al\-em\-bert principle} holds:
\[
\delta \int_{t_0}^{t_1}L(q,\dot{q})dt = 0
\]
for variations $\delta q$ of the curve $q$ such that $\delta q(t_i)
= 0$, for $i = 0, 1,$ and $\delta q (t) \in {\mathcal D} _{q (t)}$
for all $t$.
\item[{\rm (ii)}]
The {\bfi reduced La\-gran\-ge-d'Al\-em\-bert principle} holds: The
curve $x(t)\oplus \bar{v}(t)$ satisfies
\[
\delta \int_{t_0}^{t_1} l\left(x(t), \dot{x}(t), \bar{v}(t)\right)dt
= 0,
\]
for variations $\delta x \oplus \delta ^A\bar{v}$ of the curve
$x(t)\oplus \bar{v}(t)$, where $\delta ^A\bar{v}$ has the form
\[
\delta ^A\bar{v} = \frac{D \bar{\eta}}{Dt} + \left[\bar{v},
\bar{\eta}\right] + \tilde{B}(\delta x, \dot{x}),
\]
with the boundary conditions $\delta x(t_i) = 0$ and
$\bar{\eta}(t_i) = 0$, for $i = 0, 1,$ and where $\bar{\eta} (t) \in
\tilde{\mathfrak{s} } _{x (t) }$.
\item[{\rm (iii)}]
The following {\bfi vertical
La\-gran\-ge-d'Al\-em\-bert-Poincar\'{e} equations}, corresponding
to vertical variations,  hold:
\[
\frac{D}{Dt}\left.\frac{\partial l} {\partial \bar{v}}(x, \dot{x},
\bar{v})\right |_{\tilde{\mathfrak{s}}_x
} = \operatorname{ad}^{\ast}_{\bar {v}}\left.\frac{\partial
l}{\partial \bar{v}}(x, \dot{x}, \bar{v})\right |_{\tilde{\mathfrak{s}}_x},
\]
and the {\bfi horizontal La\-gran\-ge-d'Al\-em\-bert-Poincar\'{e}
equations}, corresponding to horizontal variations, hold:
\[
\frac{\partial^C l}{\partial x}(x, \dot{x}, \bar{v}) - \frac{D}{D
t}\frac{\partial l}{\partial \dot{x}}(x, \dot{x}, \bar{v}) =
\left\langle\frac{\partial l}{\partial \bar{v}} (x, \dot{x},
\bar{v}), \mathbf{i}_{\dot{x}}\tilde{B}(x) \right\rangle.
\]
\end{itemize}
In part (ii) of this theorem, if $\bar{v} = [q, v]_G$ with $v = A(q,
\dot{q})$ then $\bar{\eta}$ can be always written $\bar{\eta} = [q,
\eta]_G$, and the condition $\bar{\eta}(t_i) = 0$ for $i = 0, 1,$ is
equivalent to the condition $\eta(t_i) = 0$ for $i = 0, 1$. Also, if
$x(t) = [q]_G$ and $\bar{v} = \left[q, v\right]_G$ where $v =
A\left(q, \dot{q}\right)$, then variations $\delta x \oplus \delta
^A\bar{v}$ such that
\[
\delta ^A\bar{v} =\frac{D \bar{\eta}}{Dt} + \left[\bar{v},
\bar{\eta}\right] \equiv \frac{D[q, \eta]_G}{Dt} + \left[q,
[v,\eta]\right]_G
\]
with $\bar{\eta} (t_i) = 0$ (or, equivalently, $\eta (t_i) = 0$) for
$i = 0, 1,$ and $\bar{\eta} (t) \in \tilde{\mathfrak{s} }_{x (t) }$
correspond exactly to vertical variations $\delta q$ of the curve
$q$ such that $\delta q(t_i) = 0$ for $i = 0, 1$, and $\delta q (t)
\in \mathcal{S}_{q (t) }$, while variations $\delta x \oplus \delta
^A\bar{v}$ such that
\[
\delta ^A\bar{v} = \tilde{B}(\delta x, \dot{x})
\]
with $\delta x(t_i) = 0$ for $1 = 0, 1$, correspond exactly to
horizontal variations $\delta q$ of the curve $q$ such that $\delta
q(t_i) = 0.$

\end{thm}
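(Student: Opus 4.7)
The strategy is to establish a cyclic chain of implications (i)$\Rightarrow$(ii)$\Rightarrow$(iii)$\Rightarrow$(i), with the bridge being the vector bundle isomorphism $\alpha_A:TQ/G\to T(Q/G)\oplus\tilde{\mathfrak{g}}$ and the reduced curvature form $\tilde{B}$. The first step is to set up the variational bookkeeping. Given a curve $q(t)\in Q$ with $\dot{q}(t)\in\mathcal{D}_{q(t)}$ and an allowed deformation $q(t,\lambda)$ with $\delta q(t)\in\mathcal{D}_{q(t)}$, I project to obtain $x(t,\lambda)=[q(t,\lambda)]_G$ and form $\bar{v}(t,\lambda)=[q(t,\lambda),A(q(t,\lambda),\dot{q}(t,\lambda))]_G$. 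Because $L$ is $G$-invariant, $L(q,\dot{q})=l(x,\dot{x},\bar{v})$, so the two action integrals in (i) and (ii) are literally equal; thus (i)$\Leftrightarrow$(ii) once I show that the class of variations $\delta x\oplus\delta^A\bar{v}$ produced by admissible $\delta q$ is precisely the class described in (ii). This is the content of the final paragraph of the theorem statement and follows from splitting $\delta q$ as $\delta q_H+\delta q_V$ with respect to $A$: the vertical piece $\delta q_V=\eta q$ lives in $\mathcal{S}_q$ because $\delta q\in\mathcal{D}$, while the horizontal piece projects to $\delta x$.

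The second step is to compute $\delta^A\bar{v}$ in each case. For a vertical variation, writing $\delta q_V=\eta q$ with $\eta(t_i)=0$ and using the formula for the covariant derivative in the adjoint bundle recalled in Appendix~A, one obtains
\begin{equation*}
\delta^A\bar{v}=\frac{D\bar{\eta}}{Dt}+[\bar{v},\bar{\eta}],\qquad \bar{\eta}=[q,\eta]_G\in\tilde{\mathfrak{s}}_{x(t)},
\end{equation*}
which is the standard Euler--Poincar\'e-type formula. For a horizontal variation, $\delta q$ can be taken to be the horizontal lift $\delta x^h$, so that $\delta q_V=0$; then $\delta^A\bar{v}$ is given by the Cartan structure equation
\begin{equation*}
\delta^A\bar{v}=\tilde{B}(\delta x,\dot{x}),
\end{equation*}
via the definition (\ref{betilde}) and the fact that $A$ vanishes on horizontal lifts. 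Since an arbitrary admissible variation is a sum of these two types, (ii) is equivalent to requiring vanishing of $\delta\int l\,dt$ separately on vertical and horizontal variations.

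The third step converts this split variational principle into the two equations of (iii). Substituting the vertical expression for $\delta^A\bar{v}$, integrating by parts using the compatibility between the pairing and the covariant derivative on $\tilde{\mathfrak{g}}^*$, and using $\bar{\eta}(t_i)=0$, one gets
\begin{equation*}
\int_{t_0}^{t_1}\left\langle -\frac{D}{Dt}\frac{\partial l}{\partial\bar{v}}+\mathrm{ad}^*_{\bar{v}}\frac{\partial l}{\partial\bar{v}},\,\bar{\eta}\right\rangle dt=0
\end{equation*}
for all $\bar{\eta}(t)\in\tilde{\mathfrak{s}}_{x(t)}$ with $\bar{\eta}(t_i)=0$; the fundamental lemma of the calculus of variations then yields the vertical equation restricted to $\tilde{\mathfrak{s}}_x$. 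Substituting the horizontal expression and integrating by parts in the $T^*(Q/G)$ pairing using the covariant derivative of $\partial l/\partial\dot{x}$ (in the sense of Definition~\ref{dfnloc324}), together with the curvature-free direction of the connection $C=\nabla\oplus\tilde{\nabla}^A$, gives the horizontal equation. Reversing the argument yields (iii)$\Rightarrow$(ii), and (ii)$\Rightarrow$(i) follows because every admissible $\delta q$ produces an allowed reduced variation.

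\textbf{Main obstacle.} The delicate point is verifying the variation formula $\delta^A\bar{v}=D\bar{\eta}/Dt+[\bar{v},\bar{\eta}]+\tilde{B}(\delta x,\dot{x})$ for a \emph{general} admissible $\delta q$, not just for purely vertical or purely horizontal ones. This requires a careful commutation of $\partial/\partial\lambda$ and $d/dt$ when applied to $A(q,\dot{q})$, an application of the Cartan structure equation to rewrite $d A(\dot{q},\delta q)-[A(\dot{q}),A(\delta q)]$ as $B(\dot{q},\delta q)$, and the identification of $B$ on arbitrary vectors with $\tilde{B}$ on their projections. Once this identity is established on $Q$ and pushed down via $\alpha_A$, the remaining calculus-of-variations steps are standard.
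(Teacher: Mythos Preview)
The paper does not give its own proof of this theorem: it is stated in Appendix~B as background material, with the sentence ``In \cite{CMR01b} and \cite{CMR01a} the theory of Lagrange-Poincar\'e and Lagrange-d'Alembert-Poincar\'e equations has been developed. Here we recall the results that we use.'' So there is nothing in the paper to compare your argument against directly.

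That said, your outline is the standard argument from \cite{CMR01a,CMR01b} and is correct in its essentials. The equivalence (i)$\Leftrightarrow$(ii) via $G$-invariance of $L$ and the identification of the admissible reduced variations through the horizontal/vertical splitting of $\delta q$ is exactly how those references proceed; the derivation of $\delta^A\bar v = D\bar\eta/Dt + [\bar v,\bar\eta]$ for vertical variations and $\delta^A\bar v = \tilde B(\delta x,\dot x)$ for horizontal ones, followed by integration by parts against the covariant derivatives on $\tilde{\mathfrak g}^*$ and $T^*(Q/G)$, is likewise the approach taken there. Your identification of the ``main obstacle'' --- the mixed-variation formula via the Cartan structure equation and the passage from $B$ to $\tilde B$ --- is accurate, and is precisely the technical lemma those references establish before stating the theorem. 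One minor remark: rather than a cyclic chain (i)$\Rightarrow$(ii)$\Rightarrow$(iii)$\Rightarrow$(i), the cited references (and your own argument, in effect) establish the two bi-implications (i)$\Leftrightarrow$(ii) and (ii)$\Leftrightarrow$(iii) directly, which is slightly cleaner since each step is already reversible.
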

\vspace{.3cm}
\textbf{Remarks.}

\vspace{.3cm}
\textbf{a)} Note that
 $\displaystyle\frac{D}{Dt}\left.\frac{\partial l}{\partial\overline{v}}(x,\dot{x},\overline{v})\right |_{\tilde{\mathfrak{s}}_x
}$ stands for $\displaystyle \left.\left(\frac{D}{Dt}\frac{\partial l}{\partial\overline{v}}(x,\dot{x},\overline{v})\right)\right |_{\tilde{\mathfrak{s}}_x
}.$

\vspace{.3cm}
\textbf{b)} In the case in which there is no restriction, then $\tilde{\mathfrak{s}}=\tilde{\mathfrak{g}}$, and the Lagrange-d'Alembert-Poincar\'e equations are exactly the Lagrange-Poincar\'e equations (see \cite{CMR01b,CMR01a}).

\bibliographystyle{plain}

\end{document}